\newcommand{\FF}{{\mathbb{F}}}
\newcommand{\QQ}{{\mathbb{Q}}}
\newcommand{\RR}{{\mathbb{R}}}
\newcommand{\bi}{{\mathbf{i}}}
\newcommand{\bE}{{\mathbf{E}}}
\newcommand{\bF}{{\mathbf{F}}}
\newcommand{\bG}{{\mathbf{G}}}
\newcommand{\bH}{{\mathbf{H}}}
\newcommand{\bL}{{\mathbf{L}}}
\newcommand{\bT}{{\mathbf{T}}}
\newcommand{\bU}{{\mathbf{U}}}
\newcommand{\bZ}{{\mathbf{Z}}}
\newcommand{\cE}{{\mathcal{E}}}
\newcommand{\fA}{{\mathfrak{A}}}
\newcommand{\fS}{{\mathfrak{S}}}
\newcommand{\ab}{{\operatorname{ab}}}
\newcommand{\Aut}{{\operatorname{Aut}}}
\newcommand{\Gal}{{\operatorname{Gal}}}
\newcommand{\IBr}{{\operatorname{IBr}}}
\newcommand{\Irr}{\operatorname{Irr}}
\newcommand{\Out}{\operatorname{Out}}
\newcommand{\Syl}{\operatorname{Syl}}
\newcommand{\Chevie}{{\sf{Chevie}}}
\newcommand{\GL}{\operatorname{GL}}
\newcommand{\SL}{\operatorname{SL}}
\newcommand{\PSL}{\operatorname{L}}
\newcommand{\PGL}{\operatorname{PGL}}
\newcommand{\SU}{\operatorname{SU}}
\newcommand{\PSU}{\operatorname{U}}
\newcommand{\GO}{\operatorname{GO}}
\newcommand{\OO}{\operatorname{O}}
\newcommand{\SO}{\operatorname{SO}}
\newcommand{\tw}[1]{{}^{#1}\!}
\def\pmod#1{~({\rm mod}~#1)}
\newcommand{\tchi}{{\tilde\chi}}
\def\irr#1{{\rm Irr}(#1)}
\def\irro#1{{\rm Irr}_{0}(#1)}
\def\irrs#1{{\rm Irr}_{\sigma}(#1)}
\def\cent#1#2{{\bf C}_{#1}(#2)}
\def\norm#1#2{{\bf N}_{#1}(#2)}
\def\zent#1{{\bf Z}{(#1)}}
\def\Oh#1#2{{\bf O}^{#1}(#2)}
\def\oh#1#2{{\bf O}_{#1}(#2)}
\let\la=\lambda
\let\vhi=\varphi
\let\sbs=\subseteq
\let\surj=\twoheadrightarrow
\newtheorem{thm}{Theorem}[section]
\newtheorem{lem}[thm]{Lemma}
\newtheorem{prop}[thm]{Proposition}
\newtheorem{cor}[thm]{Corollary}
\newtheorem*{thmA}{Theorem A}
\newtheorem*{thmB}{Theorem B}
\newtheorem*{thmC}{Theorem C}
\theoremstyle{definition}
\newtheorem{exmp}[thm]{Example}
\newtheorem{rem}[thm]{Remark}
\begin{document}

\title{Height Zero Conjecture with Galois Automorphisms}

\author{Gunter Malle}
\address{FB Mathematik, TU Kaiserslautern, Postfach 3049,
         67653 Kaisers\-lautern, Germany.}
\email{malle@mathematik.uni-kl.de}

\author{Gabriel Navarro}
\address{Departament of Mathematics, Universitat de Val\`encia, 46100 Burjassot,
         Val\`encia, Spain}
\email{gabriel@uv.es}

\thanks{The first author gratefully acknowledges support by the Deutsche
 Forschungsgemeinschaft -- Project-ID
 286237555-- TRR 195. The research of the second author is supported by
 Ministerio de Ciencia e Innovaci\'on PID2019-103854GB-I00 and FEDER funds.
 He thanks Attila Mar\'oti and Noelia Rizo for useful conversations on some
 aspects of the paper.}

\keywords{Height Zero Conjecture, Galois Automorphisms, It\^o--Michler theorem}

\subjclass[2010]{Primary 20C15, 20C33}

\date{\today}

\begin{abstract}
We prove a strengthening of Brauer's height zero conjecture for principal
2-blocks with Galois automorphisms. This requires a new extension of the
It\^o--Michler theorem for the prime~2, again with Galois automorphisms.
We close, this time for odd primes $p$, with a new characterisation of
$p$-closed groups via the decomposition numbers of certain characters.
\end{abstract}

\maketitle


\section{Introduction}
Is there a strengthening of Brauer's Height Zero Conjecture with Galois
automorphisms? If $G$ is a finite group, $p$ is a prime and $B$ a $p$-block
of $G$ with defect group~$D$, Brauer's conjecture from 1955 asserts that $D$ is
abelian if and only if all complex irreducible characters in $B$ have height
zero. (Recall that $\chi\in\irr B$ has height zero if $\chi(1)_p=|G|_p/|D|$,
and that $\irro B$ denotes the set of irreducible characters in $B$ which have
height zero.) The proof of this statement has recently been completed \cite{MNST}.

The fundamental local/counting conjectures in representation theory of finite
groups were successfully strengthened in \cite{Na04} using the \emph{Frobenius
elements} of $\Gal(\QQ^\ab/\QQ)$: those which
send roots of unity of order not divisible by $p$ to some fixed $p$-power.
For $H \le \Gal(\QQ^\ab/\QQ)$, let $\Irr_H(B)$ denote the set of $H$-fixed
irreducible characters of~$B$. Then, is there an $H\ne1$ such that
$\Irr_H(B) \sbs \irro B$ if and only if $D$ is abelian?
If we pause to think on this for a moment, in the trivial case where $G$ is a
$p$-group, and therefore $\irr B$ consists of all the irreducible characters of
$G$, it seems natural to assume that $H$ fixes the $p$-power roots of unity.
\medskip

Let $\sigma\in \Gal(\QQ^\ab/\QQ)$ be the automorphism that fixes $2$-power
roots of unity and complex-conjugates odd-order roots of unity. If $B$ is a
$2$-block, let us denote by $\Irr_\sigma(B)$ the irreducible characters of~$B$
that are fixed by $\sigma$. The following strengthening of Brauer's height zero
conjecture is the main result of this paper.
 
\begin{thmA}
 Let $B$ be the principal $2$-block of a finite group $G$, and let
 $P\in\Syl_2(G)$. Then all characters in $\Irr_\sigma(B)$ have odd degree if
 and only if $P$ is abelian.
\end{thmA}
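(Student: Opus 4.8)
The plan is to separate the two implications, using the now-complete height zero conjecture \cite{MNST} both to settle one direction and to reduce the other to a pure \emph{existence} statement about $\sigma$-fixed characters.

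For the ``if'' direction, suppose $P$ is abelian. The defect group of the principal block $B$ is $P$, so by the (abelian defect $\Rightarrow$ height zero) half of \cite{MNST} every $\chi\in\irr B$ has height zero; as $|G|_2=|P|$ this says $\chi(1)$ is odd, and a fortiori every $\chi\in\Irr_\sigma(B)\sbs\irr B$ has odd degree. For the converse, note that by \cite{MNST} the subgroup $P$ is abelian if and only if \emph{every} $\chi\in\irr B$ has odd degree; hence it suffices to show that \emph{if $B$ contains a character of even degree, then it contains one fixed by $\sigma$}. Equivalently, whenever $P$ is non-abelian I must produce a single $\sigma$-fixed $\chi\in\irr B$ of even degree, and this is the entire new content of the theorem.

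I would prove this by a minimal-counterexample argument resting on the classification. Let $G$ be of minimal order with $P$ non-abelian but with every character in $\Irr_\sigma(B)$ of odd degree. Since $\oh{2'}{G}$ lies in the kernel of each $\chi\in\irr B$, the characters of $B$ are exactly the inflations of those in the principal block of $G/\oh{2'}{G}$, and inflation preserves degrees, a Sylow $2$-subgroup, and the action of $\sigma$; so I may assume $\oh{2'}{G}=1$, whence $F^*(G)=\oh{2}{G}\,E(G)$ with the layer $E(G)$ a central product of quasi-simple groups of even order. I would then follow the Navarro--Sp\"ath reduction of the height zero conjecture to quasi-simple groups, carrying the $\sigma$-action through each step; the new input needed to control the $2$-local sections occurring there is precisely the Galois refinement of the It\^o--Michler theorem established earlier in the paper, which guarantees that such a section either has a normal abelian Sylow $2$-subgroup or already affords a $\sigma$-fixed character of even degree.

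The core is then a classification check: every quasi-simple group $S$ with non-abelian Sylow $2$-subgroup carries a $\sigma$-fixed irreducible character of even degree in its principal $2$-block, chosen compatibly with the outer automorphisms so that it reassembles to a $\sigma$-fixed even-degree character of $G$. For $S$ of Lie type, Lusztig's parametrisation makes the principal-block characters, their degrees and the $\sigma$-action explicit; an even-degree character exists exactly when the Sylow $2$-subgroup is non-abelian (the known simple-group case of \cite{MNST}), and among these one locates rational -- hence automatically $\sigma$-fixed -- candidates, with the defining-characteristic-$2$ groups, the alternating groups and the sporadic groups handled from their character tables and $2$-fusion. The main obstacle is the interface between the Galois action and the reduction: exhibiting a $\sigma$-fixed even-degree character of an isolated simple group is comparatively soft, but ensuring that it survives extension to and induction along $E(G)\le F^*(G)\le G$ while remaining both $\sigma$-fixed and of even degree -- that is, establishing a genuinely $\sigma$-equivariant inductive height zero condition, and controlling the character values introduced over $\oh{2}{G}$ and by the diagonal and field automorphisms of the groups of Lie type -- is where the real difficulty lies, and it is exactly this that forces the companion Galois--It\^o--Michler theorem.
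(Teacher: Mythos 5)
Your logical skeleton is the right one and matches the paper's: the ``if'' direction is exactly Brauer's height zero conjecture for principal $2$-blocks (the paper quotes \cite{NT12b} rather than \cite{MNST}), and the converse amounts to producing a $\sigma$-fixed even-degree character in $B_0(G)$ whenever $P$ is non-abelian, with Theorem~B and a classification check on almost simple groups as the main inputs. However, as written the proposal has two genuine gaps. First, you delegate the entire group-theoretic reduction to ``carrying the $\sigma$-action through the Navarro--Sp\"ath reduction of the height zero conjecture''. That reduction concerns arbitrary blocks, runs through the inductive Alperin--McKay condition, and no $\sigma$-equivariant version of it is available; invoking it is not a proof, and it is also not what is needed here. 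The paper instead gives a direct induction adapted from \cite{NT12b} and \cite{MN21}: after reducing to $\Oh{2'}G=G$, $\oh{2'}G=1$ and a unique minimal normal subgroup $N$, one sets $Q=P\cap N$ and $M=N\cent GQ$, uses that every character of $G/M$ lies in $B_0(G)$ together with Theorem~B to force $G/M$ to be a $2$-group, shows $M/N$ is solvable (Lemma~\ref{spice}) so that $G/N$ has a normal $2$-complement, and then reduces to $G=NP$ by a $\sigma$-equivariant Clifford analysis using Alperin's isomorphic blocks. None of these steps, each of which requires an argument, appears in your sketch.

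Second, your classification input is stated only for quasi-simple groups with non-abelian Sylow $2$-subgroups, but the minimal counterexample forces you into configurations where the components themselves have \emph{abelian} Sylow $2$-subgroups while $G$ does not: for instance $N$ central of order~$2$ with components $X_i$ that are quasi-simple covers of groups such as $\PSL_2(q)$, $q\equiv\pm3\pmod 8$, where one needs a faithful $\sigma$-fixed character of $B_0(X_i)$ lying over the non-trivial central character (Proposition~\ref{gunter 3}(b)), reassembled via the central product Lemma~\ref{central}; or $N$ a product of simple groups with abelian Sylow $2$-subgroups extended by a $2$-group of outer or permutation automorphisms, which requires Proposition~\ref{gunter 3}(a),(c) and Lemma~\ref{3.1}. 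These branches are where several of the delicate existence statements of Section~\ref{sec:simple} are consumed, and your proposal does not account for them.
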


Theorem A does not extend to arbitrary 2-blocks, even if these possess
$\sigma$-invariant height zero characters, as shown by the non-principal
2-block of ${\tt SmallGroup}(96,13)$ in \cite{GAP}. Also, we cannot replace
$\sigma$ by complex conjugation (as shown by many 2-groups, for instance).
It might have some interest to remark that our $\sigma$ needs not be a
\emph{Frobenius} automorphism because it is not necessarily true that if
$m=|G|_{2’}$, then there exists an integer $n$ such that $2^n\equiv -1\pmod m$.
(For instance, if  $m=7$ or~15.)
\medskip

In order to obtain Theorem A, we shall need to prove the following extension of
the It\^o--Michler theorem for $p=2$. Here, for $G$ a finite group, we let
$\irrs G$ denote the set of $\sigma$-invariant irreducible characters of $G$.

\begin{thmB}
 Let $G$ be a finite group, and let $P \in \Syl_2(G)$. Then all characters
 in $\irrs G$ have odd degree if and only if $P$ is normal in $G$ and is
 abelian.
\end{thmB}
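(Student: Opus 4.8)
The plan is to establish the two implications separately. The forward implication is immediate from the ordinary It\^o--Michler theorem: if $P\in\Syl_2(G)$ is normal and abelian then \emph{every} character in $\irr G$ has odd degree, so in particular every character in $\irrs G$ does. The whole content lies in the converse, which I would prove by contraposition: assuming that $P$ is not both normal and abelian, I would exhibit a $\sigma$-invariant irreducible character of even degree.

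Two elementary features of $\sigma$ drive the argument. First, $\sigma^2$ acts trivially on all roots of unity, so $\sigma$ is an involution in $\Gal(\QQ^\ab/\QQ)$ and hence acts as an involution on $\irr G$; consequently any $\sigma$-stable subset of $\irr G$ of odd cardinality meets $\irrs G$. Second, since $\sigma$ fixes every $2$-power root of unity, every irreducible character of a $2$-group is $\sigma$-invariant. The second point already disposes of the case $G=P$ (a non-abelian $2$-group has a non-linear, hence even-degree, character, which is automatically $\sigma$-fixed), and more importantly it reduces the ``abelian'' half to the ``normal'' half: once $P=\oh2G\trianglelefteq G$ is known, if $P$ were non-abelian I would choose a non-linear $\theta\in\irr P$ (necessarily $\sigma$-fixed), observe that every constituent $\chi$ of $\theta^G$ satisfies $\theta(1)\mid\chi(1)$ and hence has even degree, and then, since $[G:P]$ is odd, use a parity count to see that the $\sigma$-stable set $\irr{G\mid\theta}$ has odd cardinality, so it contains a $\sigma$-fixed character of even degree.

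It therefore suffices to show that the hypothesis on $\irrs G$ forces $P\trianglelefteq G$, and here I would run a minimal-counterexample argument reducing to simple groups. The hypothesis is inherited by quotients, since characters of $G/N$ inflate to $\sigma$-invariant characters of $G$ of the same degree; moreover, because $\oh2G\le P$, one has $P\trianglelefteq G$ if and only if the image of $P$ is normal in $G/\oh2G$. Hence I may assume $\oh2G=1$, in which case a counterexample is simply a group of even order. Using the compatibility of the Glauberman correspondence with the action of $\sigma$, I would further reduce to $\oh{2'}G=1$. Then $F^*(G)=E(G)$ is a nontrivial direct product of non-abelian simple groups permuted by $G$, and it remains to derive a contradiction from the presence of a component.

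The decisive step, and the one I expect to be by far the hardest, is the classification-based input: one must verify that every non-abelian finite simple group $S$ carries a $\sigma$-invariant irreducible character of even degree which can be propagated to a $\sigma$-invariant even-degree character of the relevant almost simple overgroup, so that it survives induction in $G$ (spreading a $\sigma$-fixed $\theta$ across the component orbit, noting every constituent over it has even degree, and extracting a $\sigma$-fixed one). Exhibiting an even-degree character is comparatively routine, but guaranteeing $\sigma$-invariance at the level of $G$ requires tight control of fields of values under diagonal, field and graph automorphisms, and this must be checked uniformly across the alternating groups, the sporadic groups, and the groups of Lie type in both defining and non-defining characteristic. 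I would rely on the available body of results on rational and real-valued (and more generally $\sigma$-fixed, i.e.\ suitably $2$-rational) characters of simple groups, selecting for each family an explicit character---often a rational-valued unipotent or Weil-type character of even degree---whose invariance is stable under the outer automorphisms in play.
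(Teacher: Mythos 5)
Your overall architecture coincides with the paper's proof of Theorem~\ref{dnt}: It\^o's theorem gives the easy direction; for the converse one first deduces ``abelian'' from ``normal'' by inducing a non-linear (hence automatically $\sigma$-fixed) character of $P$ and extracting a $\sigma$-fixed constituent of even degree, and then proves normality by a minimal-counterexample argument that bottoms out in a CFSG statement about $\sigma$-invariant even-degree characters of almost simple groups (Proposition~\ref{gunter 3} and Theorem~\ref{thm:gunter 2+4}, whose verification occupies all of Section~\ref{sec:simple}). Your parity-count extraction over a $\sigma$-fixed $\theta\in\irr P$ is a legitimate variant of the paper's Lemma~\ref{overunder}(b), which instead produces the unique $\sigma$-invariant character over $\theta$ via Gallagher and a square root of the correcting linear character; both work because the relevant index is odd.

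The genuine gap is your treatment of a nontrivial odd-order normal subgroup. After the quotient and odd-index reductions you are left exactly with the case where the unique minimal normal subgroup $N$ is elementary abelian of odd order, $G=PN$ and $\cent PN=1$, and there you must still manufacture a $\sigma$-invariant irreducible character of even degree. ``Compatibility of the Glauberman correspondence with $\sigma$'' does not do this: the correspondence identifies the $P$-invariant characters of $N$ with $\irr{\cent NP}$, whereas the characters you need are precisely the non-$P$-invariant ones (to get even degree upon induction), and for those you must additionally arrange that $\sigma$ maps the $G$-orbit to itself \emph{and} that some constituent of the induced character is genuinely $\sigma$-fixed. Since $\sigma$ inverts every linear character of the odd-order group $N$, a random non-invariant $\lambda$ gives no such control. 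The paper's device is a small but essential trick: choose an involution $x\in P$ and $\lambda\in\irr N$ with $\lambda^x\ne\lambda$, and set $\nu=\lambda^{-1}\lambda^x$; then $\nu^\sigma=\bar\nu=\nu^x$, so the canonical extension $\mu$ of $\nu$ to $G_\nu$ satisfies $\mu^\sigma=\mu^x$, whence $\mu^G$ is $\sigma$-invariant of $2$-power degree $|G:G_\nu|$. The hypothesis then forces $G_\nu=G$ and $\nu=\bar\nu$, which is impossible for a nontrivial linear character of an odd-order group. Without this (or an equivalent construction) your proposed reduction to $\oh{2'}G=1$ is unproved, and the rest of the argument does not go through.
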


Theorem B resembles the main result of \cite{DNT}, where it is proved that if
all the real-valued irreducible characters of $G$, $\Irr_\RR(G)$, have odd
degree, then $G$ has a normal Sylow 2-subgroup. Let us point out now that the
sets $\Irr_\RR(G)$ and $\irrs G$ are in general different:
$\irrs G$ can be properly contained in $\Irr_\RR(G)$ (for instance,
if $G=D_{24}$), or the other way around (if $G=\GL_2(3)$).
\medskip

Unfortunately, we have not been able to find versions of Theorem~A or B for odd
primes and general finite groups, if they exist. (For solvable groups and only
for the normality of the Sylow subgroups, see \cite{Gr}.)
\medskip

In the final part of the paper, now for odd primes $p$, we derive a new
characterisation for a finite group to be $p$-closed. In \cite{MN}, we isolated
the normality condition in the It\^o--Michler theorem by proving that a Sylow
$p$-subgroup $P$ of $G$ is normal if and only if the irreducible constituents
of the permutation character $(1_P)^G$ have degree not divisible by~$p$.
Now we use the $p$-rational characters that lift modular characters of $G$
to prove the following result. Recall that an irreducible complex character
$\chi$ of $G$ is \emph{$p$-rational} if it has values in a cyclotomic field
$\QQ_m$, where $m$ is not divisible by $p$.

\begin{thmC}
 Let $G$ be a finite group, and let $p$ be an odd prime number.
 Let $P \in \Syl_p(G)$. Then $P\unlhd G$ if and only if every $p$-rational
 $\chi\in\Irr(G)$ with $\chi^0\in \IBr(G)$ has degree not divisible by~$p$.
\end{thmC}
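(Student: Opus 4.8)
The plan is to treat the two implications very asymmetrically, since the forward direction is essentially formal while all the content sits in the converse. First I would dispose of the implication $P\unlhd G\Rightarrow$ the degree condition, for which $p$-rationality is not even needed. If $P$ is normal then $P=\oh{p}{G}$ acts trivially on every simple $\overline{\FF}_pG$-module, so $\IBr(G)=\IBr(G/P)$; as $G/P$ is a $p'$-group we have $\IBr(G/P)=\Irr(G/P)$ and every $\varphi\in\IBr(G)$ has degree dividing $|G/P|$, hence prime to $p$. Now if $\chi\in\Irr(G)$ satisfies $\chi^0=\varphi\in\IBr(G)$, then evaluating at the identity (a $p$-regular element) gives $\chi(1)=\chi^0(1)=\varphi(1)$, which is prime to~$p$.

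The substance is the converse, which I would prove in contrapositive form: if $P$ is \emph{not} normal in $G$, then $G$ carries a $p$-rational $\chi\in\Irr(G)$ with $\chi^0\in\IBr(G)$ and $p\mid\chi(1)$. First I would set up a minimal counterexample $G$ and make the standard reductions. Factoring out $\oh{p'}{G}$ should be harmless for the three properties at stake (a character with $\oh{p'}{G}$ in its kernel keeps its Brauer reduction, its $p$-rationality and its degree), so one may assume $\oh{p'}{G}=1$; Clifford theory over a minimal normal subgroup then forces a component and pins down the relevant section. The aim is to produce the witness character on a suitable (sub)normal piece and transport it to $G$.

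For the bookkeeping I expect the $p$-solvable and non-$p$-solvable cases to behave differently. In the $p$-solvable case every $\varphi\in\IBr(G)$ lifts by Fong--Swan, and among its lifts there is a distinguished $p$-rational one (provided, for $p$ odd, by the $\pi$-theory of Isaacs and Navarro); here the failure of normality should already be detected by such a lift of degree divisible by~$p$, in the spirit of the solvable result of \cite{Gr}. In the non-$p$-solvable case I would reduce, via the minimal counterexample, to an (almost) simple group and invoke the classification. The prototypical witness is the \emph{Steinberg character} of a group of Lie type in its defining characteristic~$p$: it is rational, its reduction is the projective simple Steinberg module so that $\mathrm{St}^0\in\IBr(G)$, and $\mathrm{St}(1)=|G|_p$ is a nontrivial power of~$p$. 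For simple groups with $p$ a non-defining prime, and for alternating and sporadic groups, one reads off an analogous $p$-rational Brauer-lift of degree divisible by~$p$ from the known modular data.

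The hard part will be propagation: given the witness on a component or normal subgroup, one must extend it, or Clifford-correspond it, up to $G$ while simultaneously preserving $p$-rationality, keeping the Brauer reduction \emph{irreducible} (the condition $\chi^0\in\IBr(G)$ is fragile under induction, which tends to split the reduction), and retaining $p\mid\chi(1)$. Controlling all three properties at once across the extension, together with the reliance on the classification in the non-solvable case, is the main obstacle I anticipate. An alternative, classification-lighter route would be to connect the hypothesis to the characterisation of \cite{MN} via the irreducible constituents of $(1_P)^G$, by showing that a constituent there of degree divisible by~$p$ gives rise to a $p$-rational character with irreducible Brauer reduction; I expect this matching to be the delicate point should one pursue that line.
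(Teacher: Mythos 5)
Your forward direction is correct and even slightly more economical than the paper's: once $P\unlhd G$ one has $P\le\ker\varphi$ for every $\varphi\in\IBr(G)$, so $\IBr(G)=\Irr(G/P)$ consists of characters of $p'$-degree and $\chi(1)=\chi^0(1)$ is prime to~$p$; neither $p$-rationality nor the oddness of $p$ is needed for this implication (the paper proves the stronger fact $P\le\ker\chi$ via Corollary~\ref{kernels}, which does use both). The converse is where your proposal has genuine gaps, and they sit precisely at the two places you flag. First, the witness on a non-abelian simple group $S$ must carry one more property than the three you list: the paper's Theorem~\ref{thm:def 0} also requires $I_{\Aut(S)}(\eta)=I_{\Aut(S)}(\eta^0)$, and this stabiliser condition is what later keeps the Brauer reduction irreducible when one passes up to $G$. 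The uniform source of such $\eta$ is not the Steinberg character plus ad hoc modular data, but characters of $p$-defect zero, for which all four conditions hold automatically; by Granville--Ono \cite{GO96} these exist for every non-abelian simple group when $p\ge5$, and for $p=3$ the finitely many exceptions ($Suz$, $Co_3$, certain alternating groups) are handled explicitly, using JM-partitions for $\fA_n$. Second, the propagation you leave open is resolved by Lemma~\ref{gab}: writing $\theta=\eta^{x_1}\times\cdots\times\eta^{x_t}$ on the minimal normal subgroup $N$, the stabiliser condition on $\eta$ forces $I_G(\theta)=I_G(\theta^0)$; the Clifford correspondence for Brauer characters then reduces to the fully stable case, \cite[Thm.~4.5]{MNS} reduces further to $N$ central of $p'$-order, and Isaacs' $p$-rational lifts \cite{Is74} apply because $G/N$ is $p$-solvable (it has a normal Sylow $p$-subgroup by induction). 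Without $I_G(\theta)=I_G(\theta^0)$ this induction step can split the Brauer reduction, which is exactly the fragility you anticipated but did not overcome.

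Two smaller points. The reduction ``one may assume $\oh{p'}G=1$'' does not preserve the hypothesis that $P$ is non-normal: for $G=\fA_4$ and $p=3$, the quotient by $\oh{3'}G=V_4$ is $3$-closed while $G$ is not. This is recoverable (if $G/\oh{p'}G$ is $p$-closed then $G$ is $p$-solvable and falls into your other case), but as stated the step is false; the paper instead works with a unique minimal normal subgroup and treats the case where it is a $p'$-group via $p$-solvability. Finally, your suggested alternative route through the constituents of $(1_P)^G$ from \cite{MN} is explicitly warned against in the introduction: $\SL_2(3)$ with $p=3$ shows the two families of characters are not well related.
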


Theorem C generalises the main result of \cite{NT12} (for odd primes). It does
not extend to $p=2$, though: for instance, the Mathieu groups $M_{22}$ and
$M_{24}$ do not have non-trivial characters lifting irreducible 2-modular
characters. Due to the result in \cite{MN} already mentioned,
it is tempting to explore possible relationships between the irreducible
constituents of $(1_P)^G$ and the $p$-rational characters that lift irreducible
Brauer characters. As shown by $\SL_2(3)$ for $p=3$, these sets do not seem
well related.
\medskip

The $p$-rational characters that lift irreducible Brauer characters constitute
an interesting object of study. I.~M.~Isaacs proved, for $p$-solvable groups
and $p$ odd, that every irreducible $p$-Brauer character has a unique
$p$-rational lift (\cite{Is74}). (So in $p$-solvable groups, these are abundant
characters.) Moreover, continuing in this case, the subnormal irreducible
constituents of these lifts are again $p$-rational lifts. Of course, Brauer
characters do not admit lifts in general, although it is not uncommon to find
$p$-rational characters that lift irreducible Brauer characters. In
Example~\ref{exmp:An} we exhibit an infinite family of cases showing that
$p$-rational lifts of modular characters are not necessarily unique.
On the other hand, for $p>3$, we have not been able to find a $p$-rational
character $\chi$ such that $\chi^0 \in \IBr(G)$ with a subnormal
constituent $\theta \in \Irr(N)$ such that $\theta^0$ is not irreducible.
\medskip

We have said above that Theorem A does not extend to arbitrary blocks. It is
not unreasonable, though, to think that in any 2-block with a
$\sigma$-invariant canonical character (these  are the defect zero character
of any root of the block, see \cite{Na98} for a definition), perhaps one can
strengthen Brauer's Height Zero Conjecture only using $\sigma$-invariant
characters. We have not attempted this here.
\medskip

We prove Theorem~B in Section~\ref{sec:thm B}, then Theorem~A in
Section~\ref{sec:thm A} using Theorem~B, and finally Theorem~C in
Section~\ref{sec:thm C}. In Section~\ref{sec:simple} we derive some facts about
characters of almost simple groups needed along the way.
\medskip

\noindent{\bf Acknowledgement:} We thank the referee for their pertinent
comments and questions which helped to considerably improve parts of this
paper.

\section{Characters of almost simple groups}   \label{sec:simple}
 
In this section we derive some results on characters of finite almost simple
groups with respect to the Galois automorphism $\sigma\in \Gal(\QQ^\ab/\QQ)$
that fixes $2$-power roots of unity and complex-conjugates odd-order roots of
unity. These will be needed in the proofs of our first two main theorems.
Throughout, $B_0(G)$ denotes the principal 2-block of a finite group $G$.

\subsection{Existence of $\sigma$-invariant characters}

We first discuss simple groups with abelian Sylow 2-subgroups; for this we
recall the following well-known facts:

\begin{lem}\label{3.1}
 Let $S$ be non-abelian simple with abelian Sylow $2$-subgroups.
 \begin{enumerate}
  \item[\rm(a)] If $G$ is quasi-simple with $G/\bZ(G)=S$ and $|\bZ(G)|=2$, then
   $G$ has non-abelian Sylow $2$-subgroups.
  \item[\rm(b)] If $S < G\le\Aut(S)$ with $G/S$ a $2$-group, then $G$ has
   non-abelian Sylow $2$-subgroups.
 \end{enumerate}
\end{lem}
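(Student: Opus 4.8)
The plan is to invoke Walter's classification of the non-abelian simple groups $S$ with abelian Sylow $2$-subgroups: these are $\PSL_2(q)$ with $q\equiv 3,5\pmod 8$ and $q>3$, the groups $\PSL_2(2^n)$ with $n\ge2$, the Ree groups $\tw{2}G_2(3^{2n+1})$ with $n\ge1$, and the Janko group $J_1$. In every case a Sylow $2$-subgroup of $S$ is elementary abelian, a fact I will use throughout. Both parts then reduce to a short inspection of these four families, and in fact only the $\PSL_2$ groups will require genuine work.

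For (a), I would first observe that a quasi-simple $G$ with $G/\bZ(G)=S$ and $|\bZ(G)|=2$ is a perfect central extension of $S$ by a group of order $2$, so it can exist only when the Schur multiplier $M(S)$ has even order. Running through the list, this happens only for $S=\PSL_2(q)$ with $q\equiv 3,5\pmod 8$, where $M(S)=C_2$ (note that $\PSL_2(4)\cong A_5\cong \PSL_2(5)$ already sits in this family); the remaining families have multiplier of odd order. Thus $G$ is forced to be the full double cover $\SL_2(q)$, and the proof is finished by the classical fact that $\SL_2(q)$ has generalized quaternion Sylow $2$-subgroups: since $|\SL_2(q)|_2=8$ in this range, such a subgroup is $Q_8$, which is non-abelian.

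For (b), the hypothesis $S<G\le\Aut(S)$ with $G/S$ a $2$-group says that $G/S$ is a nontrivial $2$-subgroup of $\Out(S)$, and I would again split by family. For $S=\PSL_2(q)$ with $q=p^f$ odd one has $\Out(S)=C_2\times C_f$; the key arithmetic point is that $q\equiv 3,5\pmod 8$ forces $f$ to be odd, since an even $f$ would make $q$ an odd square, hence $q\equiv1\pmod 8$. Therefore the diagonal automorphism is the only nontrivial $2$-element available and $G=\PGL_2(q)$, whose Sylow $2$-subgroups are dihedral of order $8$, i.e.\ $D_8$, hence non-abelian. For $S=\PSL_2(2^n)$ one has $\Out(S)=C_n$ consisting of field automorphisms, so a nontrivial $2$-element of $G/S$ is a nontrivial field automorphism $\phi$ of $2$-power order; taking $U$ to be the elementary abelian unipotent upper-triangular Sylow $2$-subgroup of $S$, which $\phi$ normalises, the group $U\rtimes\langle\phi\rangle$ is a Sylow $2$-subgroup of $G$, and it is non-abelian. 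Finally, for the Ree groups $\tw{2}G_2(3^{2n+1})$ and for $J_1$ the outer automorphism group has odd order (respectively is trivial), so no group $G$ as in (b) exists and the statement holds vacuously.

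The routine parts here are the standard structural facts—quaternion Sylow subgroups in $\SL_2$, dihedral Sylow subgroups in $\PGL_2$, and the shapes of $M(S)$ and $\Out(S)$—which I would simply cite. The one step that needs care, and the main obstacle, is the congruence argument showing that $q\equiv 3,5\pmod 8$ excludes even field-automorphism degrees, since it is precisely this that collapses the odd-characteristic case of (b) down to $\PGL_2(q)$. The analogous crux in the even case is verifying that $U\rtimes\langle\phi\rangle$ is non-abelian, but this is immediate once one observes that a nontrivial field automorphism does not centralise $\FF_{2^n}$ and hence does not centralise $U$.
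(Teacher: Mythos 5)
Your proposal is correct and follows essentially the same route as the paper: both invoke Walter's classification and then reduce (a) to the quaternion Sylow $2$-subgroups of $\SL_2(q)$ and (b) to the dihedral Sylow $2$-subgroups of $\PGL_2(q)$ together with the nontrivial action of field automorphisms on the elementary abelian Sylow $2$-subgroup of $\PSL_2(2^f)$. The only difference is that you spell out the Schur multiplier and congruence details that the paper leaves implicit.
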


\begin{proof}
By the classification theorem of Walter \cite{Wa69}, $S$ is one of $\PSL_2(q)$
with
$q\equiv\pm3\pmod8$, $\PSL_2(2^f)$ ($f\ge3$), $^2G_2(3^{2f+1})$ ($f\ge1$) or
$J_1$. Now the latter three do not have proper Schur covers, so in~(a) there is
nothing to prove for these. For $S=\PSL_2(q)$ with $q\equiv\pm3\pmod8$, the
only relevant central extension is $G=\SL_2(q)$, and this has quaternion
Sylow 2-subgroups of order~8. If $S$ has an even order outer automorphism,
then either $S=\PSL_2(q)$ with $q\equiv\pm3\pmod8$ and $G=\PGL_2(q)$ with
dihedral Sylow 2-subgroups of order~8, or $S=\PSL_2(2^f)$ with $f$ even, and
here the field automorphisms act as Galois automorphisms on a Sylow 2-subgroup
$P\cong \FF_{2^f}^+$ of $S$.
\end{proof}

\begin{prop}   \label{gunter 3}
 Let $S$ be non-abelian simple with abelian Sylow $2$-subgroups.
 \begin{enumerate}
  \item[\rm(a)] There exist $\chi_1\in\irrs S$ of even degree and
   $1_S\ne\chi_2\in\irrs{B_0(S)}$.
  \item[\rm(b)] If $G$ is quasi-simple with $G/\bZ(G)=S$ and $|\bZ(G)|=2$, then
   there is a faithful $\chi\in\irrs{B_0(G)}$ (hence of even degree).
  \item[\rm(c)] If $S < G\le\Aut(S)$ with $G/S$ a $2$-group, then there exists
   $\chi\in\irrs{B_0(G)}$ of even degree not lying over $1_S$.
 \end{enumerate}
\end{prop}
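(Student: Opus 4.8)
The plan is to go through the four possibilities for $S$ furnished by Walter's theorem and recalled in Lemma~\ref{3.1}: $\PSL_2(q)$ with $q\equiv\pm3\pmod8$, $\PSL_2(2^f)$ with $f\ge3$, $\tw2G_2(3^{2f+1})$, and $J_1$. Two remarks will be used throughout. For a principal- or discrete-series character $\chi_\theta$ of a rank-one group, indexed by a character $\theta$ of a split or non-split torus, one has $\chi_\theta^\sigma=\chi_{\theta^\sigma}$ and $\chi_\theta=\chi_{\theta^{-1}}$; hence $\chi_\theta$ is $\sigma$-invariant whenever $\theta$ has odd order (so that $\theta^\sigma=\theta^{-1}$) or $\theta$ is a $2$-element (so that $\sigma$ fixes $\theta$), while the Steinberg character is $\sigma$-invariant because it is rational. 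Moreover the block of $\chi_\theta$ is governed by the $2$-part of $\theta$: in non-defining characteristic $\chi_\theta\in B_0$ exactly when $\theta$ is a $2$-element, whereas $B_0(\PSL_2(2^f))$ (defining characteristic) consists of all irreducible characters but the Steinberg one. I shall also use the standard fact that if $G/N$ is a $2$-group then $B_0(G)$ is the unique block of $G$ covering $B_0(N)$, so that every character of $G$ lying over a character of $B_0(N)$ lies in $B_0(G)$.

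For $S=\PSL_2(q)$ with $q$ odd, the degrees $q\pm1$ are even while exactly one of $(q-1)/2$, $(q+1)/2$ is odd; choosing a nontrivial torus character $\theta$ of that odd order produces a $\sigma$-invariant series character of even degree, which serves as $\chi_1$ in~(a). For $\chi_2$ I take the Steinberg character, which is rational and lies in $B_0(S)$—here the principal block has Klein-four defect group and consists of $1$, $\mathrm{St}$ and the two exceptional characters $\eta_1,\eta_2$. For~(b) we have $G=\SL_2(q)$; since $(q^2-1)_2=8$, one of the two tori carries a character $\theta$ of order $4$ (a $2$-element), and then $\chi_\theta$ has even degree $q\pm1$, is $\sigma$-invariant, lies in $B_0(G)$, and the evaluation $\theta(z)=-1$ at the central involution $z$ shows it is faithful. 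For~(c), the congruence $q\equiv\pm3\pmod8$ forces $f$ odd, so the only even-order outer automorphism is diagonal and $G=\PGL_2(q)$; here I would induce an exceptional character $\eta$ of $\PSL_2(q)$. As $\PGL_2(q)$ fuses $\eta_1$ and $\eta_2$, the induced character $\eta^{\PGL_2(q)}$ is irreducible of even degree ($q-1$ or $q+1$), does not lie over $1_S$, and lies in $B_0(\PGL_2(q))$; it is $\sigma$-invariant because $\eta_1^\sigma\in\{\eta_1,\eta_2\}$ while $\eta_1^{\PGL_2(q)}=\eta_2^{\PGL_2(q)}$.

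For $S=\PSL_2(2^f)$ with $f\ge3$, part~(a) is settled by taking $\chi_1=\mathrm{St}$ (degree $2^f$, rational) and $\chi_2$ any character of degree $q+1$, which is automatically $\sigma$-invariant and, not being the Steinberg character, lies in $B_0(S)$; part~(b) is vacuous as the Schur multiplier is trivial. For~(c) we must have $f$ even and $G=S.C$ with $C$ a nontrivial group of field automorphisms of $2$-power order, whose involution acts on a split-torus character $\alpha$ by $\alpha\mapsto\alpha^{\sqrt q}$. Since the characters with $\alpha^{\sqrt q}\in\{\alpha,\alpha^{-1}\}$ number at most $2\sqrt q<q-1$ (as $q\ge16$), I can pick $\alpha$ with stabiliser exactly $S$ in $G$; then $\theta=\chi_\alpha$ (degree $q+1$, in $B_0(S)$, $\sigma$-invariant) induces to an irreducible $\theta^G$ of even degree $[G:S](q+1)$, not lying over $1_S$, lying in $B_0(G)$, and $\sigma$-invariant because induction commutes with $\sigma$.

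Finally, for $S=\tw2G_2(3^{2f+1})$ and $S=J_1$ the outer automorphism group has odd order and the Schur multiplier is trivial, so only~(a) is in question; I would read off a rational character of even degree (for $\chi_1$) and a nontrivial rational character of the principal $2$-block, whose defect group is elementary abelian of order~$8$ (for $\chi_2$), from Ward's generic character table of the Ree groups and from the ATLAS for $J_1$. The principal obstacle is exactly the control of the $2$-block distribution: the clean rule ``$\chi_\theta\in B_0$ iff $\theta$ is a $2$-element'' rests on the known theory of tame blocks of rank-one groups with Klein-four, quaternion, or dihedral defect groups, and the Ree and sporadic cases admit no uniform argument and must be verified directly in the character tables.
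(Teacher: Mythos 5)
Your proposal is correct and follows essentially the same route as the paper: a case analysis over Walter's list, with $\sigma$-invariance obtained from torus characters of odd order (giving real values in odd-order cyclotomic fields) or of $2$-power order, block membership via the Lusztig-series/$2$-element criterion in cross characteristic and via the defining-characteristic block structure for $\PSL_2(2^f)$, and table look-ups for $\tw2G_2(3^{2f+1})$ and $J_1$. The only differences are cosmetic (e.g.\ in (c) for odd $q$ you induce the two half-degree characters of $\PSL_2(q)$, which produces exactly the degree-$(q-1)$ character of $\PGL_2(q)$ labelled by an order-$4$ element that the paper names directly, and for $\PSL_2(2^f)$ you count split-torus characters where the paper uses an element of full order $q+1$ in the nonsplit torus).
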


\begin{proof}
In (a), for $S=\PSL_2(q)$ with $3<q\equiv\pm3\pmod8$ we take for $\chi_1$ a
(real) character of degree $q-1$ labelled by an element of odd order in a torus
of order $q+1$ in the dual group $\PGL_2(q)$, for $\chi_2$ the Steinberg
character; for $^2G_2(q^2)$ with $q^2\ge27$ for $\chi_1$ a (real) character of
degree $(q^4-1)(q^2-\sqrt{3}q+1)$ (see \cite{Chv}) and for $\chi_2$ the Steinberg character;
for $J_1$ for $\chi_1$ one of the two (real) characters of degree~56, and
for $\chi_2$ the rational character of degree~209; and for $\PSL_2(q)$,
$q=2^f\ge8$, for $\chi_1$ the Steinberg character and for $\chi_2$ a real
character of degree $q-1$ labelled by an element of order $q+1$ in $\PGL_2(q)$.
Then in all cases, $\chi_1$ is $\sigma$-invariant of even degree, and
$\chi_2\in\Irr_\sigma(B_0(S))$.
\par
For (b), the only case is $G=\SL_2(q)$ with $q\equiv\pm3\pmod8$. Here the
claim follows from the known character table of $G$ (see
\cite[Tab.~2.6]{GM20}). Alternatively, the dual
group $\PGL_2(q)$ contains an element $s$ of order~4 not lying in $\PSL_2(q)$.
Since $s$ is regular semisimple, by Lusztig's Jordan decomposition
\cite[2.6.22]{GM20} it labels a faithful irreducible Deligne--Lusztig character
$\chi$ of $\SL_2(q)$. Since $s$ is a rational element, $\chi$ is
rational (by \cite[Cor.~3.3.14]{GM20}) and thus $\sigma$-invariant. Also, as
$s$ is a 2-element, $\chi\in\Irr(B_0(G))$ by \cite[Thm.~21.14]{CE}.   \par
In (c), for $S=\PSL_2(2^f)$ all outer automorphisms are field automorphisms.
Here, the irreducible induction of a character $\chi\in\Irr(S)$ corresponding
to an element of order $q+1$ in $\PGL_2(q)$ is as desired. For $\PSL_2(q)$,
$3<q\equiv\pm3\pmod8$, we have $G=\PGL_2(q)$. Here we take for $\chi$ an
irreducible character of degree $q-1$ labelled by an element of order~4 in
the dual group $\SL_2(q)$. Again, $\chi$ is rational by \cite[Cor.~3.3.14]{GM20}
and lies in $B_0(G)$ by \cite[Thm.~21.14]{CE}.
\end{proof}

The main result of this section is the following; its proof will occupy the
remainder of this section:

\begin{thm}   \label{thm:gunter 2+4}
 Let $S$ be simple with non-abelian Sylow $2$-subgroups and $S\le G\le\Aut(S)$
 with $G/S$ a $2$-group. Then there exists $\chi\in\Irr_\sigma(B_0(G))$ of even
 degree not lying over $1_S$.
\end{thm}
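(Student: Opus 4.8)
The plan is to reduce the statement on $G$ to one about the simple group $S$ by combining block theory with Clifford theory, and then to establish the latter family by family using the classification.

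\emph{Reduction.} Since $G/S$ is a $2$-group, every $2$-block of $S$ is covered by a unique $2$-block of $G$, and the block covering $B_0(S)$ is $B_0(G)$. Hence any $\chi\in\Irr(G)$ lying over some $\theta\in\Irr(B_0(S))$ automatically lies in $B_0(G)$. If moreover $\theta\ne1_S$, then by Clifford theory $\chi$ does not lie over $1_S$ and $\theta(1)\mid\chi(1)$. It therefore suffices to produce $\theta\in\irrs S$ of \emph{even} degree with $\theta\in B_0(S)$ and $\theta\ne1_S$, together with a $\sigma$-invariant $\chi\in\Irr(G\mid\theta)$; the even degree of $\theta$ forces $\chi$ to have even degree. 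Note that the existence of \emph{some} even-degree character of $B_0(S)$ is guaranteed abstractly, since the Sylow $2$-subgroup is non-abelian and Brauer's Height Zero Conjecture is now a theorem \cite{MNST}; the genuine content is to secure $\sigma$-invariance and to lift to $G$.

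\emph{From $S$ to $G$.} Write $T=G_\theta$ for the inertia group; as $\sigma$ commutes with the $G$-action and $\theta^\sigma=\theta$, $\sigma$ stabilises $T$ and permutes $\Irr(G\mid\theta)$. If $T=S$ then $\theta^G\in\Irr(G)$ is $\sigma$-invariant of even degree $[G:S]\,\theta(1)$, and we are done; the intermediate cases reduce, via the Clifford correspondent, to extension from $T$. The essential difficulty is the $G$-invariant case, which is forced whenever $\theta$ is the unique character of its degree. Here I would arrange that $\theta$ extends to $G$; fixing an extension $\hat\theta$, Gallagher's theorem gives $\hat\theta^\sigma=\hat\theta\lambda_0$ for a linear $\lambda_0\in\Irr(G/S)$. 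Since $G/S$ is a $2$-group and $\sigma$ is an involution fixing $2$-power roots of unity, every linear character of $G/S$ is $\sigma$-fixed, whence $\lambda_0^2=1$ and twisting $\hat\theta$ by linear characters leaves $\lambda_0$ unchanged. Thus a $\sigma$-invariant extension exists exactly when the order-$\le2$ obstruction $\lambda_0$ vanishes, and the clean way to ensure this is to take $\theta$ rational with a rational extension $\hat\theta$ (so $\hat\theta^\sigma=\hat\theta$); a $2$-rational real extension suffices equally well.

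\emph{Construction of $\theta$, by type.} Using the classification, I would run through the simple groups $S$ with non-abelian Sylow $2$-subgroups. For groups of Lie type — the bulk of the work — the candidates are even-degree unipotent characters and, more generally, characters in the Lusztig series of a rational semisimple $2$-element $s$ of the dual group whose unipotent part has even degree: membership in $B_0(S)$ is controlled by $s$ being a $2$-element via \cite[Thm.~21.14]{CE}, rationality follows from $s$ rational via \cite[Cor.~3.3.14]{GM20}, and the degree is read off from Jordan decomposition. For the $G$-invariant extension step I would invoke the known canonical extensions of unipotent characters to $\Aut(S)$ together with control of their fields of values. For the alternating and symmetric groups one exhibits explicit even-degree $2$-rational characters of $\fA_n$ in the principal block and uses the transparent extension behaviour from $\fA_n$ to $\fS_n$; the sporadic groups and the small exceptional cases are settled directly from their character tables and \cite{GAP}.

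\emph{Main obstacle.} I expect the hard part to be the $G$-invariant case handled uniformly across the Lie-type families: for each family and each flavour of automorphism in $G/S$ (diagonal, field, graph) one must pin down an even-degree principal-block character whose chosen extension to $G$ is genuinely $\sigma$-invariant, i.e.\ for which $\lambda_0=1$. Controlling the rationality of the \emph{extensions}, rather than merely of the characters of $S$, and disposing of the low-rank and small-field exceptions (where generic degree arguments degenerate) is where the real technical effort lies.
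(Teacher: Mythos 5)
Your overall architecture (reduce to producing a suitable character of $S$ in $B_0(S)$, then handle the passage to $G$ by Clifford theory, then run through the classification using Lusztig series of $2$-elements and unipotent characters) is the same as the paper's, and your reduction observations are correct. But the outline leaves unresolved exactly the two points where the proof actually lives, and in one respect it is set up too rigidly to reach them. First, you insist on $\theta\in\irrs S$ of \emph{even} degree. The paper's key workhorse, Lemma~\ref{lem:odd}, goes the other way: it takes $\theta$ of \emph{odd} degree, not $G$-invariant, satisfying only the weaker condition $\theta^\sigma=\theta^g$ for some $g\in G$ (not $\theta^\sigma=\theta$), and exploits the unique extension of determinantal order~$1$ to $G_\theta$ --- available precisely because $o(\theta)\theta(1)$ is coprime to the $2$-power $|G:S|$ --- to get a $\sigma$-invariant induced character whose even degree comes from $|G:G_\theta|$. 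This mechanism is genuinely used (e.g.\ for $E_6(q)$ with a graph automorphism and $q$ a square in Proposition~\ref{prop:square}, and for $D_n$ and $\PSL_n$ where the chosen semisimple element is non-real but inverted by an automorphism in $G$). Your framework excludes both this source of characters and this extension device, since for even-degree $\theta$ the determinant-$1$ trick is unavailable.

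Second, in the $G$-invariant even-degree case you correctly identify the Gallagher obstruction $\lambda_0$ but propose to kill it by choosing $\theta$ rational with a rational extension. That works for unipotent characters (the paper uses \cite{DM85} together with \cite{SF19} exactly there), but the regular and semisimple characters attached to non-central $2$-elements $s$ --- which carry most of the Lie-type analysis --- are in general \emph{not} rational: they are $\sigma$-invariant because their values lie in fields generated by $2$-power roots of unity (Lemma~\ref{lem:2-elt}(c)), or because $s$ is real. Producing a $\sigma$-stable extension of such a character to its stabiliser under field automorphisms is a substantial problem; the paper resolves it by verifying Assumption~6.6 of \cite{Ru21} (via an explicit torus element $\tilde t$ realising $\sigma$ on the characters of a maximal unipotent subgroup) and invoking \cite[Prop.~6.8]{Ru21}. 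Without this input, or a replacement for it, your ``main obstacle'' paragraph is an accurate description of a gap rather than a plan for closing it.
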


In this paper, if $N$ is a normal subgroup of $G$ and $\theta \in \irr N$,
we will denote by $G_\theta$ or $I_G(\theta)$ the stabilizer of $\theta$
in~$G$.  We will make use of the following criterion:

\begin{lem}   \label{lem:odd}
 In the situation of Theorem~{\rm\ref{thm:gunter 2+4}}, let
 $1_S\ne\theta\in\Irr(S)$ of odd degree be such that $\theta^\sigma=\theta^g$
 for some $g \in G$. If $\theta$ is not $G$-invariant and lies in the principal
 2-block of $S$, then $\theta^G$ has a constituent $\chi$ satisfying the
 conclusion of Theorem~{\rm\ref{thm:gunter 2+4}}.
\end{lem}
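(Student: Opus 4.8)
The plan is to show that \emph{every} irreducible constituent $\chi$ of $\theta^G$ already satisfies three of the four required properties automatically, and then to single out one constituent that is in addition $\sigma$-invariant. Throughout set $T=G_\theta=I_G(\theta)$; since $\theta$ is not $G$-invariant we have $S\le T<G$, and as $G/S$ is a $2$-group the index $|G:T|$ is a power of $2$ strictly greater than $1$, hence even. By the Clifford correspondence, induction $\psi\mapsto\psi^G$ is a bijection from $\Irr(T\mid\theta)$ onto the set of constituents of $\theta^G$, so each such $\chi=\psi^G$ has degree $\chi(1)=|G:T|\,\psi(1)$, which is even; this gives the even degree. Moreover $\chi$ lies over $\theta\ne 1_S$, hence not over $1_S$. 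Finally, since $\theta\in B_0(S)$ and $G/S$ is a $2$-group, $B_0(G)$ is the unique block of $G$ covering $B_0(S)$, so the block of $\chi$, which covers the block of $\theta$, must be $B_0(G)$. Thus the only real content is to produce a constituent that is fixed by $\sigma$.

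First I would record what the hypothesis $\theta^\sigma=\theta^g$ buys us. Because the Galois action commutes with $G$-conjugation on characters, $I_G(\theta^\sigma)=I_G(\theta)=T$ while $I_G(\theta^g)=g^{-1}Tg$; comparing these forces $g\in\norm{G}{T}$. Translating $\sigma$-invariance through the Clifford correspondence, $\chi=\psi^G$ is $\sigma$-invariant if and only if $(\psi^\sigma)^G=\psi^G$, and since $\psi^\sigma\in\Irr(T\mid\theta^g)$ while conjugation by $g$ carries $\Irr(T\mid\theta)$ onto $\Irr(T\mid\theta^g)$, this is equivalent to the single requirement
\[
\psi^\sigma=\psi^g\qquad(\psi\in\Irr(T\mid\theta)).
\]
So everything reduces to finding one $\psi\in\Irr(T\mid\theta)$ with $\psi^\sigma=\psi^g$. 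Note that $\theta$ is typically not $\sigma$-invariant, so $\psi$ will not be $\sigma$-invariant either; the two Galois twists are meant to cancel against the conjugation by $g$.

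The key idea, and the step I expect to be decisive, is that because $S$ is nonabelian simple we have $S=[S,S]$, hence $\det\theta=1_S$ and the determinantal order $o(\theta)=1$. Since $\theta(1)$ is odd and $|T:S|$ is a power of $2$, the integer $o(\theta)\theta(1)$ is coprime to $|T:S|$, so by Isaacs' canonical extension theorem $\theta$ possesses a \emph{unique} extension $\hat\theta\in\Irr(T)$ whose determinantal order is prime to $|T:S|$. I would take $\psi=\hat\theta$. The point is that this extension is canonical and therefore equivariant under every operation that preserves degrees and determinantal orders: $\sigma$ (a Galois automorphism preserves the multiplicative order of $\det$) sends $\hat\theta$ to the canonical extension of $\theta^\sigma=\theta^g$, and conjugation by $g^{-1}\in\norm{G}{T}$ then sends that to the canonical extension of $(\theta^g)^{g^{-1}}=\theta$, namely $\hat\theta$ again. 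Hence $(\hat\theta^\sigma)^{g^{-1}}=\hat\theta$, i.e. $\hat\theta^\sigma=\hat\theta^g$, which is exactly the cancellation required above.

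Putting this together, $\chi:=\hat\theta^G$ is irreducible by the Clifford correspondence, has even degree $|G:T|\,\theta(1)$, does not lie over $1_S$, lies in $B_0(G)$, and satisfies $\chi^\sigma=(\hat\theta^\sigma)^G=(\hat\theta^g)^G=\hat\theta^G=\chi$; so $\chi\in\Irr_\sigma(B_0(G))$ meets the conclusion of Theorem~\ref{thm:gunter 2+4}. The one place demanding care, the main obstacle, is precisely the $\sigma$-invariance, and the whole argument turns on the observation that simplicity of $S$ kills the determinantal order, which is what makes Isaacs' canonical, and hence simultaneously Galois- and conjugation-stable, extension available. Without that input a bare parity count of $|\Irr(T\mid\theta)|$ need not be odd and could fail to produce a fixed point, so the canonical choice of extension is what rescues the argument.
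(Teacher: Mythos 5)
Your proposal is correct and follows essentially the same route as the paper: both arguments observe that $o(\theta)=1$ because $S$ is perfect, invoke Isaacs' canonical extension \cite[Cor.~6.28]{Isaacs} to the stabiliser, use its uniqueness to deduce $\hat\theta^\sigma=\hat\theta^g$ from $\theta^\sigma=\theta^g$, and then conclude via the Clifford correspondence together with the fact that $B_0(G)$ is the unique block covering $B_0(S)$. The only cosmetic difference is that you make explicit the (correct and implicitly needed) observation that $g$ normalises $G_\theta$.
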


\begin{proof}
Since $S$ is perfect, the determinantal order $o(\theta)$ is~1, so $|G:S|$ is
prime to $o(\theta)\theta(1)$. Let $G_\theta$ be the stabiliser of $\theta$
in $G$. By \cite[Cor.~6.28]{Isaacs} there exists a unique
$\eta\in\Irr(G_\theta)$ extending $\theta$ with $o(\eta)=1$.
Since $\theta^\sigma=\theta^g$ for some $g\in G$ we have
$\theta^{\sigma g^{-1}}=\theta$, so $\eta^{\sigma g^{-1}}$ is an extension of
$\theta$ with determinantal order 1. Thus $\eta^{\sigma g^{-1}}=\eta$, and
therefore $\eta^\sigma=\eta^g$. Now $\chi:=\eta^G\in\Irr(G)$ has even degree
because $G_\theta<G$, and
$\chi^\sigma=(\eta^\sigma)^G=(\eta^g)^G=\chi^g=\chi$. Finally $\chi$ lies in
$B_0(G)$ since this is the only 2-block covering $B_0(S)$ by \cite[Cor.~9.6]{Na98}.
\end{proof}

\subsection{Alternating, sporadic, Ree and Suzuki groups}
Let's first get some easy cases out of the way.

\begin{prop}   \label{prop:alt}
 Theorem~{\rm\ref{thm:gunter 2+4}} holds for $S$ either an alternating group
 $\fA_n$ with $n\ge5$ or a sporadic simple group.
\end{prop}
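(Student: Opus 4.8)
The plan is to split according to the structure of $\Out(S)$. Since $G/S\le\Out(S)$ is a $2$-group and, for $S$ alternating with $n\neq6$ or sporadic, $\Out(S)$ is cyclic of order at most~$2$, we have $G=S$ or $G=S.2$; the group $\fA_6$, whose outer automorphism group is a Klein four group, contributes only the finitely many extra overgroups $\PGL_2(9)$, $M_{10}$ and $\Aut(\fA_6)$, which I would dispatch by hand from the known character tables. Two general inputs drive the argument: every character of $\fS_n$ is rational, hence automatically $\sigma$-invariant; and Lemma~\ref{lem:odd} reduces the case $G=S.2$ to producing a non-$G$-invariant odd-degree character of $S$ in $B_0(S)$ whose $\sigma$-image lies in its $G$-orbit. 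When $G=S$, the target is instead an even-degree $\sigma$-invariant character of $B_0(S)$ other than $1_S$; the mere existence of an even-degree character of $B_0(S)$ is guaranteed by the now-proven height zero conjecture \cite{MNST}, precisely because the Sylow $2$-subgroups are non-abelian, so the real content is to arrange $\sigma$-invariance.

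For $S=\fA_n$ with $n\ge6$ (the case $n=5$ is vacuous, as $\fA_5$ has abelian Sylow $2$-subgroups) I would write down the character explicitly, distinguishing $n$ modulo~$4$. Set $\lambda=(n-2,2)$ when $n\equiv0,3\pmod4$ and $\lambda=(n-2,1,1)$ when $n\equiv1,2\pmod4$. In each case $\lambda\neq\lambda'$ for $n\ge6$, so $\chi^\lambda$ is rational and its restriction to $\fA_n$ stays irreducible; a short abacus computation shows that $\lambda$ has empty $2$-core when $n$ is even and $2$-core $(1)$ when $n$ is odd, so by the Nakayama conjecture $\chi^\lambda\in B_0(\fS_n)$; and the degrees $\tfrac12 n(n-3)$ respectively $\binom{n-1}{2}$ are even for exactly the residues chosen. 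Hence for $G=\fS_n$ the character $\chi^\lambda$ itself is $\sigma$-invariant of even degree and, having degree $>1$, does not lie over $1_{\fA_n}$; while for $G=\fA_n$ the restriction $\chi^\lambda|_{\fA_n}$ is irreducible, rational (so $\sigma$-invariant), of the same even degree, lies in $B_0(\fA_n)$ (the block covered by $B_0(\fS_n)$), and is non-trivial. This settles the alternating groups uniformly.

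For the sporadic groups I would argue case by case, the list being finite (all of them except $J_1$, which has abelian Sylow $2$-subgroups). For $G=S$ it suffices to exhibit a rational character of even degree in $B_0(S)$ different from $1_S$, read off directly from the GAP character table library; the height zero theorem again certifies that an even-degree character of $B_0(S)$ is present, and inspection shows one may be taken rational. For $G=S.2$ (only the sporadic groups with $\Out(S)=C_2$) I would apply Lemma~\ref{lem:odd}: it is enough to find a rational odd-degree character $\theta\neq1_S$ in $B_0(S)$ not fixed by the outer involution, for then $\theta^\sigma=\theta=\theta^1$ and the lemma produces the desired $\chi$. Such a $\theta$, swapped with another character of equal degree by the outer automorphism, exists in every relevant case, once more by direct table inspection.

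The main obstacle is the uniform treatment of the alternating family: one must choose, for every $n$, a single partition that simultaneously lies in the principal block (the $2$-core constraint), is non-self-conjugate (to guarantee rationality after restriction to $\fA_n$), and has even degree (the positive-height requirement). The splitting into the two families $(n-2,2)$ and $(n-2,1,1)$ according to $n\bmod4$ is exactly what reconciles these three conditions, and verifying the $2$-core and degree-parity claims is the technical heart. The sporadic part, by contrast, is routine but unavoidably a finite, computer-assisted check, and the exceptional outer automorphisms of $\fA_6$ must be inspected separately.
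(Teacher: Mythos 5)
Your proposal is correct and follows essentially the same route as the paper: the identical partitions $(n-2,2)$ and $(n-2,1^2)$ chosen according to $n\bmod 4$, the same verification of rationality, $2$-core and degree parity, the same separate treatment of the exceptional $\fA_6$ overgroups, and a finite table check for the sporadic groups. The only cosmetic difference is that for sporadic $S.2$ you route through Lemma~\ref{lem:odd} with an odd-degree non-invariant character, whereas the paper simply verifies the existence of a suitable even-degree character directly from the tables; both are routine.
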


\begin{proof}
For the sporadic groups and their automorphism groups, the known character
tables \cite{GAP} allow one to check the claim directly. Let now $G=\fS_n$ with
$n\ge5$. Let $\la$ be the partition $(n-2,2)$ when $n\equiv0,3\pmod4$ and
$(n-2,1^2)$ when $n\equiv1,2\pmod4$. Then by
the hook formula, the irreducible character $\chi\in\Irr(G)$ labelled by
$\lambda$ has even degree. Since $\la$ has 2-core of size~1 or~0, $\chi$
lies in the principal 2-block of~$G$. Since all characters of $\fS_n$ are
rational valued, this proves our claim in this case.   \par
The group $\fA_5$ has abelian Sylow 2-subgroups. For $n\ge6$, $\la$ is not
self-conjugate and so $\chi$ restricts irreducibly to $S=\fA_n$ and hence also
proves our claim there. The only remaining case is $S=\fA_6$ and $G$ one of
$\PGL_2(9)$, $M_{10}$ or $\Aut(\fS_6)$. For these groups, the extensions of
$\chi_S$ to $G$ have values in $\QQ(\zeta_8)$ and thus are $\sigma$-invariant.
\end{proof}

\begin{prop}   \label{prop:Ree}
 Theorem~{\rm\ref{thm:gunter 2+4}} holds for $S$ a Suzuki or Ree group.
\end{prop}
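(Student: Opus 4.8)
The plan is to handle the relevant families after two reductions common to all of them. Among Suzuki and Ree groups only those with non-abelian Sylow $2$-subgroups are subject to Theorem~\ref{thm:gunter 2+4}; the small Ree groups $\tw2G_2(3^{2f+1})$ have elementary abelian Sylow $2$-subgroups and were already treated in Proposition~\ref{gunter 3}, so the cases left are $S=\tw2B_2(q)$ and $S=\tw2F_4(q)$ with $q=2^{2f+1}\ge8$, together with the Tits group $\tw2F_4(2)'$. In the two infinite families $\Out(S)$ is cyclic of odd order (generated by the field automorphisms), so the hypothesis that $G/S$ is a $2$-group forces $G=S$. Then ``not lying over $1_S$'' means simply ``$\neq1_S$'', and it suffices to find $\chi\in\irrs{B_0(S)}$ of even degree. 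The Tits group and its degree-two extension $\tw2F_4(2)$ I would dispose of directly from the character tables in \cite{GAP}.

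The second reduction is block-theoretic and specific to the defining characteristic. In each infinite family the Steinberg character, of degree $|S|_2$, is the unique irreducible character of $2$-defect zero, and the principal block is the only $2$-block of positive defect; hence every irreducible character of $S$ other than the Steinberg character lies in $B_0(S)$. So I only need a $\sigma$-invariant character of even degree whose degree has $2$-part strictly smaller than $|S|_2$, its membership in $B_0(S)$ then being automatic. The arithmetic fact powering $\sigma$-invariance is that in defining characteristic~$2$ the irrationalities arising in these groups are $2$-power cyclotomic: $\sqrt2=\zeta_8+\zeta_8^{-1}$ and $\sqrt{-2}=\zeta_8+\zeta_8^3$ both lie in $\QQ(\zeta_8)$, and $\sigma$ fixes every $2$-power root of unity.

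For $S=\tw2B_2(q)$ I would take one of the two cuspidal unipotent (exceptional) characters, of degree $\frac r2(q-1)=2^f(q-1)$ with $r=\sqrt{2q}$; this is even since $f\ge1$ and is not the Steinberg character. Its only non-rational values occur on the classes of elements of order~$4$ and lie in $\QQ(\zeta_8)$, so the character is $\sigma$-invariant by the remark above. Note that complex conjugation interchanges the two exceptional characters, whereas $\sigma$ fixes each of them, precisely because their distinguishing value $\sqrt{-2}$ is fixed by~$\sigma$. For $S=\tw2F_4(q)$ I would argue in the same spirit, using the explicit list of unipotent characters available in \Chevie\ \cite{Chv}: one selects a unipotent character of even degree all of whose values lie in $\QQ(\sqrt2)\subseteq\QQ(\zeta_8)$, which is then $\sigma$-invariant and, not being the Steinberg character, lies in $B_0(S)$.

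The step I expect to be the main obstacle is this last selection for $\tw2F_4(q)$: one must verify from the character-value data that some even-degree unipotent character has all of its irrationalities confined to $\QQ(\zeta_8)$, while carefully avoiding the cuspidal family whose Frobenius eigenvalues are primitive cube roots of unity---those characters carry odd-order irrationalities on which $\sigma$ acts nontrivially and so are useless here. The Suzuki case is clean by contrast, since its only non-rational even-degree characters are the two cuspidal unipotent ones, with irrationality $\sqrt{-2}$. A lesser point requiring justification is the claim that $B_0(S)$ is the unique $2$-block of positive defect; I would support it by the known decomposition matrices of these groups in defining characteristic, or, failing a convenient reference, by checking Brauer's central-character criterion for the chosen character on the (odd-order) $2$-regular classes.
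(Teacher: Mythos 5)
Your proposal is correct and follows essentially the same route as the paper: reduce to $G=S$ via the odd order of $\Out(S)$, handle the Tits group by direct inspection, use the defining-characteristic fact that every non-Steinberg character lies in $B_0(S)$ (this is \cite[Thm.~6.18]{CE}, so no ad hoc verification is needed), and then take the cuspidal unipotent characters for $\tw2B_2(q)$ and even-degree (principal series) unipotent characters for $\tw2F_4(q)$, whose irrationalities are $2$-power cyclotomic and hence $\sigma$-fixed. The only slip is cosmetic: the Suzuki cuspidal unipotent characters have values in $\QQ(\sqrt{-1})$ rather than $\QQ(\sqrt{-2})$, but both fields lie in $\QQ(\zeta_8)$ so your argument is unaffected.
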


\begin{proof}
The groups $^2G_2(3^{2f+1})$ have abelian Sylow 2-subgroups and thus need not
be considered. For $S=\tw2F_4(2)'$ the assertion is easily checked. Otherwise,
$\Out(S)$ has odd order whence $G=S$. Here, all characters apart from the
Steinberg character lie in the principal 2-block by \cite[Thm.~6.18]{CE}.
For $\tw2B_2(2^{2f+1})$ the two cuspidal unipotent characters have values in
$\QQ(\sqrt{-1})$ and thus work; for $\tw2F_4(2^{2f+1})$, we may take any of the
three unipotent characters of even degree in the principal series.
\end{proof}

\subsection{Groups of Lie type in characteristic~2}
In this section we work in the following setting: $\bG$ is a simple algebraic
group of adjoint type over an algebraically closed field of characteristic~$2$
with a Frobenius endomorphism $F$ such that $S=[\bG^F,\bG^F]$ is non-abelian
simple. We denote by $\bG^*$ a group in duality with $\bG$, with corresponding
Frobenius map also denoted~$F$. Thus, $\bG^*$ is of simply connected type. The
characters whose existence is stipulated in Theorem~\ref{thm:gunter 2+4} will
mainly be constructed as follows:

\begin{lem}   \label{lem:elt}
 Let $\bG,\bG^*$ and $S$ be as above. Let $1\ne s\in\bG^{*F}$ be a semisimple
 element with connected centraliser in $\bG^*/\bZ(\bG^*)$ and let
 $\chi\in\Irr(\bG^F)$ be the associated regular character. Then
 \begin{enumerate}
  \item[\rm(a)] $\chi$ restricts irreducibly to $S$;
  \item[\rm(b)] $\chi_S\in\Irr(B_0(S))$;  
  \item[\rm(c)] if $s$ is real then $\chi$ is $\sigma$-stable;
  \item[\rm(d)] if $s$ is not regular then $\chi(1)$ is even; and
  \item[\rm(e)] for any automorphism $\gamma$ of $\bG^F$ induced by a Frobenius
   map $F_0:\bG\to\bG$ commuting with $F$, $\chi$ has a $\sigma$-stable
   extension to $\bG^F\langle\gamma\rangle_\chi$.
 \end{enumerate}
\end{lem}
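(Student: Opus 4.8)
The plan is to identify $\chi$ as the regular character of the Lusztig series $\cE(\bG^F,s)$, that is, the Jordan correspondent of the Steinberg character $\mathrm{St}$ of $\cent{\bG^*}{s}^F$. Since $\bG^*$ is simply connected, $\cent{\bG^*}{s}$ is connected by Steinberg's theorem, so the series is well understood and $\chi(1)=[\bG^{*F}:\cent{\bG^*}{s}^F]_{2'}\cdot|\cent{\bG^*}{s}^F|_2$. Part~(d) is then immediate: as the index is odd we get $\chi(1)_2=|\cent{\bG^*}{s}^F|_2=q^N$, where $N$ is the number of positive roots of $\cent{\bG^*}{s}$ (we work in defining characteristic), and this exceeds~$1$ precisely when $\cent{\bG^*}{s}$ is not a torus, i.e.\ precisely when $s$ is not regular. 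Throughout I assume $s\notin\zent{\bG^*}$, which is the case of interest; for central $s$, $\chi$ would be a defect-zero twist of $\mathrm{St}$ and~(b) would fail.

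For (a) and (b), I would use that $S=[\bG^F,\bG^F]$ is normal in $\bG^F$ with abelian quotient of odd order (indeed $|\bG^F/S|$ divides $|\zent{\bG^*}^F|$, which is prime to~$2$). By Clifford theory $\chi_S$ is irreducible if and only if no nontrivial linear character of $\bG^F/S$ fixes~$\chi$. Identifying $\Irr(\bG^F/S)$ with $\zent{\bG^*}^F$ and using $\hat z\otimes\chi=\chi_{zs}$ (twisting permutes regular characters), this amounts to $zs\not\sim_{\bG^{*F}}s$ for every $1\ne z\in\zent{\bG^*}^F$. The elements $z$ with $zs\sim s$ biject with the $F$-fixed component group of the centraliser of $\bar s:=s\zent{\bG^*}$ in $\bG^*/\zent{\bG^*}$ (see \cite{GM20}); here the connectedness hypothesis enters, forcing this group to be trivial and hence $\chi_S$ irreducible. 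For~(b), $\chi$ has positive $2$-defect (as $|\cent{\bG^*}{s}^F|_2<|\bG^F|_2$ for non-central~$s$), so it lies in $B_0(\bG^F)$ by the block distribution of $\bG^F$ in its defining characteristic; since $|\bG^F/S|$ is odd, $B_0(\bG^F)$ covers only $B_0(S)$ by \cite[Cor.~9.6]{Na98}, and therefore $\chi_S\in\Irr(B_0(S))$.

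For (c), I would appeal to the description of the Galois action on Lusztig series in \cite[Cor.~3.3.14]{GM20}: since $\sigma$ inverts odd-order roots of unity while fixing $2$-power roots, and $o(s)$ is odd, $\sigma$ carries $\chi=\chi_s$ to $\chi_{s^{-1}}$. If $s$ is real then $s^{-1}\sim_{\bG^{*F}}s$, whence $\chi^\sigma=\chi$.

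Part (e) is where I expect the real difficulty. As $\gamma$ is induced by a Frobenius map commuting with $F$, the group $\langle\gamma\rangle$ is cyclic, so $\chi$ extends to $\bG^F\langle\gamma\rangle_\chi$, and the extensions form a torsor under $\Irr(\langle\gamma\rangle_\chi)$. On this torsor $\sigma$ acts trivially on the $2$-part (whose linear characters take $2$-power-root-of-unity values) and invertibly on the odd part; consequently a $\sigma$-fixed extension exists if and only if the obstruction $\lambda_0:=\tilde\chi^\sigma/\tilde\chi$ is trivial on a Sylow $2$-subgroup of $\langle\gamma\rangle_\chi$, the odd part always being removable by twisting with a suitable odd-order linear character. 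To kill the $2$-part I would produce a canonical extension of the regular (Gelfand--Graev constituent) character built from the $F_0$-equivariant structure of the Gelfand--Graev representation, whose values lie in a field $\QQ_{2^a o(s)}$ and whose $\sigma$-image is again governed by $s^{-1}$; reality of $s$ then makes this canonical extension $\sigma$-fixed. When $\gcd(\chi(1)o(\chi),|\langle\gamma\rangle_\chi|)$ permits, a cleaner substitute is the uniqueness argument of Lemma~\ref{lem:odd}: the extension of prescribed determinantal order is unique and has $\sigma$-invariant data, so is fixed by~$\sigma$. Pinning down the value field of the canonical extension, so that $\sigma$ acts on it exactly as $s\mapsto s^{-1}$, is the technical heart of the lemma.
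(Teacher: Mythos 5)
Your treatment of parts (a)--(d) is essentially correct and close in substance to the paper's own argument: for (a) you use the Clifford-theoretic criterion that $\chi_S$ is irreducible precisely when no non-trivial linear character of the odd abelian quotient $\bG^F/S$ fixes $\chi$, which under duality is controlled by the component group of the centraliser of the image of $s$ in $\bG^*/\bZ(\bG^*)$ (the paper runs the same computation through a regular embedding of the simply connected cover and \cite[Prop.~2.6.17 and Lemma~2.6.20]{GM20}); (b) and (d) follow as you say from the defining-characteristic block distribution \cite[Thm.~6.18]{CE} and the Jordan degree formula; and for (c) you pass through $\chi_s^\sigma=\chi_{s^{-1}}$ rather than the paper's ``$\chi$ is uniform, hence real-valued when $s$ is real'' --- both work. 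Your side remark that (b) would fail for $1\ne s\in\bZ(\bG^*)^F$ is a fair catch: such an $s$ satisfies the stated hypothesis vacuously, and the intended (and applied) situation is indeed $s\notin\bZ(\bG^*)$.

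The genuine gap is part (e), which you yourself defer as ``the technical heart''. Your reduction is correct as far as it goes: extensions of a $\sigma$-fixed $\chi$ to the cyclic extension $\bG^F\langle\gamma\rangle_\chi$ form a torsor under $\Irr(\langle\gamma\rangle_\chi)$, the obstruction $\lambda_0=\tilde\chi^\sigma\tilde\chi^{-1}$ can always be removed on the odd part by twisting, and everything hinges on its triviality on the Sylow $2$-subgroup of $\langle\gamma\rangle_\chi$ --- which is the whole problem, since in the application $\langle\gamma\rangle_\chi$ \emph{is} a $2$-group; your fallback via the uniqueness argument of Lemma~\ref{lem:odd} is unavailable because $\chi(1)$ is even in the relevant (non-regular $s$) case. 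What actually closes the argument is Ruhstorfer's equivariant theory \cite{Ru21}: one applies \cite[Prop.~6.8]{Ru21} to the Alvis--Curtis dual of $\chi$, which is the semisimple character of $\cE(\bG^F,s)$ and hence lies in the image of the map of \cite[Thm.~5.7]{Ru21}, and the key point making Assumption~6.6 of \cite{Ru21} hold with $\tilde t=1$ is special to characteristic~$2$: a maximal unipotent subgroup $\bU^F$ of $\bG^F$ is a Sylow $2$-subgroup, so \emph{every} irreducible character of $\bU^F$ takes values in $2$-power roots of unity and is automatically $\sigma$-fixed. No analysis of the value field of a canonical Gelfand--Graev extension is then needed; without this (or an equivalent) input your proposal does not establish (e).
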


\begin{proof}
As $\bG$ has trivial centre, there is a unique (irreducible) regular character
$\chi$ of $\bG^F$ attached to $s$, see \cite[12.4.10]{DM20}. Let $\bH\surj\bG$
be a simply connected covering of $\bG$, and $\bH\hookrightarrow\tilde\bH$ a
regular embedding and choose corresponding Frobenius maps on $\bH$ and
$\tilde\bH$, also denoted~$F$. The natural maps
$\bH\hookrightarrow\tilde\bH\surj\tilde\bH/\bZ(\tilde\bH)
 \cong\bH/\bZ(\bH)\cong\bG$ 
induce $F$-equivariant maps $\bG^*\hookrightarrow\tilde\bH^*\surj\bH^*$ of dual
groups, where $\bH^*\cong\bG^*/\bZ(\bG^*)$. By assumption, the image
$\bar s\in\bH^*$ of $s$ has connected centraliser. Let $\tilde\chi$ denote the
inflation of $\chi$ to $\tilde\bH^F$. By \cite[Prop.~2.6.17 and Lemma~2.6.20]{GM20} the
number of constituents of $\tilde\chi|_{\bH^F}$ equals the number of connected
components of $C_{\bH^*}(\bar s)$, that is, $\tilde\chi$ restricts irreducibly
to $\bH^F$. Thus $\chi$ restricts irreducibly to
$S=[\bG^F,\bG^F]\cong \bH^F/\bZ(\bH)^F$ whence we obtain~(a).   \par
Now, in defining characteristic all irreducible characters of $\bG^F$ apart
from those above the Steinberg character of $S$ lie in the principal 2-block
\cite[Thm.~6.18]{CE}. Since the Steinberg character is unipotent while
$\chi\in\cE(G,s)$ with $s\ne1$, we have $\chi\in\Irr(B_0(\bG^F))$
and hence also $\chi_S\in\Irr(B_0(S))$. For~(c) observe that $\chi$ is uniform
by its very definition as a linear combination of Deligne--Lusztig characters
\cite[Thm.~3.4.16]{GM20}. Then by the character formula for Deligne--Lusztig
characters \cite[Thm.~2.2.16]{GM20} the values of $\chi$ lie in the field
generated by values of linear characters of order dividing $o(s)$ of various
tori of $\bG^F$. Thus, since $s$ is semisimple and hence of odd order, the
character values of $\chi$ lie in an extension generated by odd-order roots of
unity. Moreover, if $s$ is real then $\chi$ is real valued using
\cite[Prop.~3.3.15]{GM20}, whence it is $\sigma$-stable. By the degree formula
\cite[Thm.~3.4.16(c) and Cor.~2.6.6]{GM20} the degree $\chi(1)$ is divisible by
$|C_{\bG^*}(s)^F|_2$, hence even if $s$ is not regular.   \par
For (e) note that we are in the setting of \cite[\S6.5]{Ru21}: Since $\chi$ is
regular, its Alvis--Curtis dual $\chi^0$ is semisimple and hence lies in the
image of the map $f$ from \cite[Thm~5.7]{Ru21}. Now a maximal unipotent
subgroup of $\bG^F$ is a (Sylow) 2-group, hence all of its characters are
$\sigma$-invariant. This shows that our Galois automorphism $\sigma$ satisfies
the Assumption~6.6 of \cite{Ru21} with $\tilde t=1$. In particular $\mu_\sigma$
in \cite[Rem.~6.7]{Ru21} is then the trivial character. Our assertion~(e) is
now \cite[Prop.~6.8]{Ru21}. For this note that its proof applies to $\sigma$
since it only uses \cite[Prop.~6.2]{Ru21} which is formulated for arbitrary
Galois automorphisms. Moreover, the proof actually first shows that the
Alvis--Curtis dual $\chi$ of $\chi_0$ has the desired property.
\end{proof}

\begin{prop}   \label{prop:char 2}
 Theorem~{\rm\ref{thm:gunter 2+4}} holds for $S$ a simple group of Lie type in
 characteristic~$2$.
\end{prop}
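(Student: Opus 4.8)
The plan is to reduce everything to Lemma~\ref{lem:elt} by producing a single well-chosen semisimple element $s\in\bG^{*F}$. Concretely, I would look for $1\ne s\in\bG^{*F}$ semisimple such that (i) the image $\bar s$ in $\bG^*/\bZ(\bG^*)$ has connected centraliser, (ii) $s$ is real, (iii) $s$ is not regular, and (iv) $s$ is stable, up to $\bG^{*F}$-conjugacy, under the duals of all field and graph automorphisms of $S$ occurring in $G$. Granting such an $s$, let $\chi\in\Irr(\bG^F)$ be the associated regular character and put $\theta:=\chi_S$. By Lemma~\ref{lem:elt}(a),(b) we have $\theta\in\Irr(B_0(S))$ with $\theta\ne1_S$; by (d) the degree $\theta(1)=\chi(1)$ is even; and by (c) the character $\chi$ is $\sigma$-stable.

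The next step is to inflate $\theta$ to a suitable character of $G$. Since $\bG$ is adjoint, $\bG^F$ induces all inner-diagonal automorphisms of $S$, and $\theta$ is $\bG^F$-invariant (being the irreducible restriction of $\chi$), so the diagonal automorphisms in $G$ cause no difficulty. For the remaining automorphisms I would set $\hat G:=\bG^F K$, where $K$ is the $2$-group of field and graph automorphisms realised by $G$, so that $G\le\hat G$ and $\hat G/\bG^F$ is a $2$-group. Condition (iv) makes $\chi$ invariant under $K$, whence Lemma~\ref{lem:elt}(e), applied to a Frobenius map $F_0$ generating $K$, yields a $\sigma$-stable extension $\hat\chi\in\Irr(\hat G)$ of $\chi$. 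Then $\psi:=\hat\chi|_G$ restricts irreducibly to $S$ (because $\chi$ does), hence lies in $\Irr(G)$; it is $\sigma$-invariant, has even degree $\theta(1)$, does not lie over $1_S$ since $\psi|_S=\theta$, and lies in $B_0(G)$ as this is the only $2$-block covering $B_0(S)$ by \cite[Cor.~9.6]{Na98}. This is precisely the conclusion of Theorem~\ref{thm:gunter 2+4}.

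To arrange (i)--(iv) uniformly I would impose the stronger, more checkable conditions that $s$ be \emph{rational} and of order prime to $|\bZ(\bG^*)|$. Rationality gives (ii) and the field part of (iv) simultaneously: a rational $s$ has rational associated character by \cite[Cor.~3.3.14]{GM20}, and since $o(s)$ is odd it is fixed by the field Frobenius of $2$-power degree. Coprimality of $o(s)$ to $|\bZ(\bG^*)|$ forces $s\not\sim sz$ for every $1\ne z\in\bZ(\bG^*)$ (as then $o(sz)>o(s)$), which makes the relevant component group trivial and so secures the connectedness in (i). It then remains, type by type, to exhibit a nontrivial rational semisimple element of order prime to $|\bZ(\bG^*)|$ whose centraliser is not a torus and which is stabilised by the graph symmetry present in $G$; I would take $s$ of small odd order, placed so that its centraliser contains a rank-one factor (forcing non-regularity, hence (iii)) and is symmetric with respect to the graph automorphism.

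The main obstacle is exactly this case-by-case existence of $s$, concentrated in the small-rank and small-field groups. For instance, in $\PSL_3(2)$ every nontrivial semisimple element is regular, so no choice of $s$ satisfies (iii); such groups lie outside the reach of Lemma~\ref{lem:elt} and must be settled by direct inspection of the character tables \cite{GAP}, as was done for the sporadic groups in Proposition~\ref{prop:alt}. A secondary difficulty is reality in the types with $-1\notin W$ (types $A_n$, $D$ of odd rank, $E_6$), which is the reason for routing (ii) through rationality, together with the extension step in type $D$, where the field/graph $2$-group $K$ may be non-cyclic; there I would extend $\chi$ across $K$ in stages, applying Lemma~\ref{lem:elt}(e) to successive generating Frobenius maps. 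I expect the bulk of the labour, and all the genuine exceptions, to be confined to these low-rank and twisted cases.
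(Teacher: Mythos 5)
Your overall architecture inverts the paper's: for the types admitting graph automorphisms (the only hard cases), the paper does \emph{not} look for an $s$ fixed by all outer automorphisms and then extend $\chi$; it deliberately chooses $s$ whose class is moved by every non-trivial $2$-power field or graph automorphism, so that $\chi^G$ is irreducible of even degree with no extension problem, and gets $\sigma$-stability from $\chi^\sigma=\bar\chi=\chi^g$ for $g$ the graph automorphism (which inverts $s$). Your route has a fatal existence failure in an infinite family: for $S=\PSL_3(q)$ or $\PSU_3(q)$ with $q=2^f>2$, a non-central semisimple element of $\SL_3(q)$ that is non-regular has eigenvalues $\{a,a,a^{-2}\}$, and reality forces $a=a^{-1}$, i.e.\ $a=1$ in characteristic~$2$; so there is \emph{no} real non-regular non-central semisimple element at all, and conditions (ii) and (iii) are jointly unsatisfiable. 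These groups have non-abelian Sylow $2$-subgroups and graph (and possibly even-order field) automorphisms, and they cannot be dismissed as finitely many table look-ups. This is exactly where the paper's ``anti-invariant'' element of order dividing $(q^3-1)/\gcd(3,q-1)$ is forced.

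The second genuine gap is the extension step. Lemma~\ref{lem:elt}(e) produces a $\sigma$-stable extension only to $\bG^F\langle\gamma\rangle_\chi$ for a \emph{single} automorphism induced by a Frobenius map; when $G$ realises both a graph automorphism and an even-order field automorphism (e.g.\ above $\PSL_n(4^k)$ or $D_n(4^k)$), the relevant outer $2$-group $K$ is non-cyclic, and ``extending in stages'' is not an argument: extendibility to each cyclic subgroup of $K$ does not imply extendibility to $K$ (the obstruction is a class in $H^2(K,\mathbb{C}^\times)$ that cyclic restrictions do not detect), let alone the existence of a $\sigma$-stable extension. The paper never needs this because in every case where $K$ could be non-cyclic it arranges $G_\chi\cap K=1$ and induces. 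If you want to salvage your plan you must either quote a genuine extendibility theorem for characters of groups of Lie type invariant under field and graph automorphisms (together with a separate argument for $\sigma$-stability of the chosen extension), or switch to the paper's induction strategy for types $A$, $D$ and $E_6$. Your treatment of the field-automorphism-only cases, and your connectedness criterion via $\gcd(o(s),|\bZ(\bG^*)|)=1$, are fine and essentially match the paper.
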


\begin{proof}
Let $S$ be as in the statement. By Proposition~\ref{prop:Ree} we may assume
$S$ is not a Suzuki or Ree group. Thus there is a simple algebraic group
$\bG$ of adjoint type in characteristic $2$ with a Frobenius
endomorphism~$F:\bG\to\bG$ such that $S=[\bG^F,\bG^F]$. We have $\bG$ is not of
type $A_1$ since Sylow 2-subgroups of $S$ are non-abelian.  \par
For $S=\PSL_3(2)$ or $\PSL_4(2)$ the claim is easily verified. For all other
cases we claim $\bG^{*F}$ contains a non-trivial real non-regular semisimple
element with connected centraliser in $\bG^*/\bZ(\bG^*)$. Indeed, in groups of
type $A$ we choose an element with non-trivial eigenvalues the two primitive
third roots of unity, and all other eigenvalues~1, in groups of type $E_6$ we
take an element of order five in a subsystem subgroup of type $A_3$, so
centralised at least by an $A_1$-subgroup, and in
all other types we take any element of order~3 in an $\SL_2$-subgroup (this has
connected centraliser by application of \cite[Prop.~14.20]{MT}).
Now a Sylow 2-subgroup of the outer automorphism group of~$S$ is generated by
graph automorphisms together with Frobenius endomorphisms of~$\bG$ commuting
with~$F$ (see \cite[Thm.~2.5.1]{GLS}). Thus, by Lemma~\ref{lem:elt} we are done
unless $G$ induces non-trivial graph automorphisms, that is, $S$ is of type
$A_{n-1}$ ($n\ge3$), $D_n$ ($n\ge4$) or $E_6$. So now assume we are in the
latter cases.

For $S=\PSL_n(q)$ let $t\in\PGL_n(q)$ be the image of an element
$\tilde t\in\GL_n(q)$ of order $q^n-1$ (if $n$ is odd), respectively
$q^{n-1}-1$ (if $n$ is even), and let $s:=t^d\in[\PGL_n(q),\PGL_n(q)]$ with
$d=\gcd(n,q-1)$. Then $s$ has connected centraliser in $\PGL_n\cong \bG^*/\bZ(\bG^*)$. It
parametrises a semisimple character $\chi$ of $S$ (see \cite[2.6.10]{GM20})
whose values lie in a cyclotomic field generated by odd-order roots of unity.
The eigenvalues of $\tilde t$ form an orbit under $\Gal(\FF_{q^n}/\FF_q)$ of a
generator of $\FF_{q^n}^\times$ if $n$ is odd, respectively an orbit
under $\Gal(\FF_{q^{n-1}}/\FF_q)$ of a generator of $\FF_{q^{n-1}}^\times$
together with~1, if $n$ is even.
From this it follows that the class of $s$ is not invariant under non-trivial
2-power order field automorphisms, nor under their product with the
transpose-inverse automorphism, sending $s$ to its inverse (up to
conjugation). Thus, using \cite[Prop.~7.2]{Tay18}, $\chi$ is not invariant
under the dual automorphisms of $S$, so $\chi^G\in\Irr(B_0(G))$ is of even
degree and real, whence $\sigma$-stable.
\par
Let $S$ be of type $D_n(q)$. Since we are in characteristic~2 we may identify
$\bG^{*F}$ with $\SO_{2n}^+(q)$. Let $s$ be an element of order $q^n-1$ in the
stabiliser $\GL_n(q)$ of a maximal totally isotropic subspace of $\bG^{*F}$.
Being semisimple, $s$ has connected centraliser in
$\bG^*/\bZ(\bG^*)$. The eigenvalues of $s$ in this representation are
then a Galois orbit of a generator of $\FF_{q^n}^\times$ together with their
inverses. It follows that the set of eigenvalues is not invariant under any
non-trivial automorphism of $\FF_q$, hence the class of $s$ is not invariant
under any non-trivial field automorphism of $\bG^{*F}$. Also, if $n$ is even,
then $s$ is real but the
graph automorphism interchanges the two conjugacy classes of stabilisers
of totally singular subspaces, hence does not fix the class of~$s$. If $n$ is
odd, $s$ is non-real but the graph automorphism of $S$ induces the
transpose-inverse automorphism of $\GL_n(q)$, so conjugates $s$ to its inverse.
So in either case the semisimple character $\chi$ of $S$ labelled by~$s$ has 
$\chi^G\in\Irr_\sigma(B_0(G))$ of even degree.
\par
Finally, for $S=E_6(q)$ let $s$ be the third power of a generator of a maximal
torus $T$ of $\bG^{*F}$ of order $q^6+q^3+1$, contained in a subsystem subgroup
$H=A_2(q^3)$. Since the order of $s$ is prime to~3, its image has connected
centraliser in $\bG^*/\bZ(\bG^*)$ by \cite[Prop.~14.20]{MT}. By definition the
order of $s$ is divisible by a Zsigmondy prime for $q^9-1$, so $s$ lies in a
unique maximal torus of $\bG^{*F}$ and so is regular semisimple.
Thus $N_S(\langle s\rangle)=N_S(T)$. Since $|N_S(T)/T|=9$, $s$ is non-real. A
computation in the root system of $\bG^*$ using \Chevie\ \cite{Chv} shows that
the graph automorphism of order~2 of $\bG^*$ induces on $H=A_2(q^3)$ the
transpose-inverse automorphism
and thus inverts~$s$. Computation in $A_2$ shows that the class of $s$ is also
not invariant under non-trivial field automorphisms of 2-power order or
products of these with the graph automorphism, so we conclude as before.
\end{proof}

\subsection{Groups of Lie type in odd characteristic}

We now prove Theorem~\ref{thm:gunter 2+4} for simple groups of Lie type in odd
characteristic. Let $\bG$ be a simple algebraic group of adjoint type over a
field of odd characteristic, with a Frobenius endomorphism $F$ with respect to
an $\FF_q$-structure, and let $(\bG^*,F)$ be dual to $(\bG,F)$. We set
$S:=[\bG^F,\bG^F]$. Similar to the approach in the even characteristic case,
the following will be our main source of suitable characters:

\begin{lem}   \label{lem:2-elt}
 Let $\bG,\bG^*,S$ be as above. Let $1\ne s\in\bG^{*F}$ be a $2$-element with
 connected centraliser in $\bG^*/\bZ(\bG^*)$. Then the associated semisimple
 character $\chi\in\Irr(\bG^F)$ satisfies:
 \begin{enumerate}
  \item[\rm(a)] $\chi$ restricts irreducibly to $S$;
  \item[\rm(b)] $\chi_S\in\Irr(B_0(S))$;  
  \item[\rm(c)] $\chi$ is $\sigma$-stable;
  \item[\rm(d)] $\chi(1)$ is odd if and only if $s$ lies in the centre of a
   Sylow $2$-subgroup of $\bG^{*F}$; and
  \item[\rm(e)] for any automorphism $\gamma$ of $\bG^F$ induced by a Frobenius
   map $F_0:\bG\to\bG$ commuting with $F$, $\chi$ has a $\sigma$-stable
   extension to $\bG^F\langle\gamma\rangle_\chi$.
 \end{enumerate}
\end{lem}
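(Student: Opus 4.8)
The plan is to prove Lemma~\ref{lem:2-elt} in close parallel to Lemma~\ref{lem:elt}, which establishes the analogous facts for semisimple elements of odd order in characteristic~$2$. Since $s$ is now a $2$-element with connected centraliser in $\bG^*/\bZ(\bG^*)$, parts~(a) and~(b) should transfer almost verbatim. For~(a), I would again pass to a simply connected covering $\bH\surj\bG$ and a regular embedding $\bH\hookrightarrow\tilde\bH$, so that the number of irreducible constituents of the restriction of the inflated character to $\bH^F$ equals the number of connected components of $C_{\bH^*}(\bar s)$, where $\bar s$ is the image of $s$. Connectedness of the centraliser then forces irreducible restriction, giving~(a). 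For~(b), the key point is that $s$ is a $2$-element, so by \cite[Thm.~21.14]{CE} the semisimple character $\chi\in\cE(\bG^F,s)$ lies in $B_0(\bG^F)$, and hence $\chi_S\in\Irr(B_0(S))$.

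\noindent Part~(c) is where the odd-characteristic case genuinely differs from the even one. In Lemma~\ref{lem:elt}(c) $\sigma$-stability came from $s$ being real and of odd order; here $s$ is a $2$-element, so I expect $\sigma$-stability instead to follow from the specific action of $\sigma$ on semisimple characters labelled by $2$-elements. The natural tool is \cite[Cor.~3.3.14]{GM20} or a companion Galois-action statement: under Lusztig's Jordan decomposition, a Galois automorphism acts on $\cE(\bG^F,s)$ through its action on the labelling class of~$s$ and on the unipotent part. Since $\sigma$ fixes $2$-power roots of unity and $s$ is a $2$-element, the class of~$s$ should be $\sigma$-fixed, and the semisimple (regular-dual) character carries essentially the trivial unipotent decoration, so $\sigma$ should fix~$\chi$. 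I would phrase this via the uniform-character value computation as in Lemma~\ref{lem:elt}(c): the values of $\chi$ lie in a field generated by values of linear characters of order dividing $o(s)$, which is now a $2$-power, and these are fixed by~$\sigma$. This is the step I expect to require the most care, since it reverses the parity hypothesis of the even-characteristic argument.

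\noindent For~(d), I would use the semisimple-character degree formula: $\chi(1)=|\bG^{*F}:C_{\bG^*}(s)^F|_{p'}$ up to the appropriate normalisation, so that $\chi(1)$ is the full $p'$-part of the index, and its $2$-part is $|\bG^{*F}|_2/|C_{\bG^*}(s)^F|_2$. Hence $\chi(1)$ is odd if and only if $|C_{\bG^*}(s)^F|_2=|\bG^{*F}|_2$, i.e.\ $C_{\bG^*}(s)^F$ contains a full Sylow $2$-subgroup of $\bG^{*F}$; since $s$ is a $2$-element this is equivalent to $s$ lying in the centre of a Sylow $2$-subgroup of $\bG^{*F}$. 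Finally for~(e), the situation is identical to Lemma~\ref{lem:elt}(e): $\chi$ regular means its Alvis--Curtis dual is semisimple and lies in the image of the map~$f$ of \cite[Thm.~5.7]{Ru21}, a maximal unipotent subgroup of $\bG^F$ now has odd order but the argument of \cite[Prop.~6.8]{Ru21} applies to any Galois automorphism, so I would again verify Assumption~6.6 of \cite{Ru21} holds for $\sigma$ with $\tilde t=1$ and invoke \cite[Prop.~6.8]{Ru21}. The main obstacle throughout is ensuring the Galois-action and block-theoretic inputs, tuned for odd-order elements in characteristic~$2$ in Lemma~\ref{lem:elt}, remain valid for $2$-elements in odd characteristic; I would treat~(c) and~(d) as the genuinely new verifications and leave~(a), (b), (e) as transcriptions of the earlier argument.
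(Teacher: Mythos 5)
Parts (a)--(d) of your proposal match the paper's argument: (a) is the same reduction to the simply connected covering and the count of components of $C_{\bH^*}(\bar s)$; (b) is the block membership for Lusztig series of $2$-elements (the paper cites \cite[Thm.~B]{En00} here, but your reference to \cite[Thm.~21.14]{CE} is the one the paper itself uses in the analogous step of Proposition~\ref{gunter 3}, so this is immaterial); (c) is exactly the observation that the values of $\chi$ involve only roots of unity of order dividing $o(s)$, now a $2$-power, hence fixed by $\sigma$; and (d) is the degree formula, with the correct equivalence between $C_{\bG^{*}}(s)^F$ containing a Sylow $2$-subgroup and $s$ being $2$-central.

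Part (e), however, contains a genuine gap. You propose to verify Assumption~6.6 of \cite{Ru21} with $\tilde t=1$, as in Lemma~\ref{lem:elt}(e). That verification rests on every character of a maximal unipotent subgroup $\bU^F$ being $\sigma$-invariant, which holds in characteristic~$2$ because $\bU^F$ is a $2$-group. In odd characteristic $\bU^F/[\bU,\bU]^F$ is an abelian group of odd order, so $\sigma$ acts on its nontrivial linear characters by complex conjugation, i.e.\ by inversion, and they are \emph{not} $\sigma$-fixed; with $\tilde t=1$ the assumption simply fails. The paper's actual argument is to choose the additive character $\phi_0$ so that $\tw\sigma\phi_0(a)=\phi_0(-a)$, and then to produce a nontrivial element $\tilde t$ of a maximally split torus with $\alpha_i(\tilde t)=-1$ for every simple root $\alpha_i$ (this uses that $\bG$ is adjoint), so that conjugation by $\tilde t$ sends $x_i(a)$ to $x_i(-a)$ and hence realises the $\sigma$-action on $\Irr(\bU/[\bU,\bU]^F)$; one then checks $d(\tilde t)\tilde t^{-1}=1$ for the relevant graph and field automorphisms $d$ before invoking \cite[Prop.~6.8]{Ru21}. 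This construction of $\tilde t$ is the genuinely new content of (e) in odd characteristic and is missing from your outline. (A smaller point: here $\chi$ is already the semisimple character, so no passage to the Alvis--Curtis dual is needed, unlike in Lemma~\ref{lem:elt}(e).)
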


\begin{proof}
Since $\bG$ is of adjoint type we have $\bZ(\bG)=1$ and thus there is a unique
semisimple character $\chi\in\Irr(\bG^F)$ in the Lusztig series $\cE(\bG^F,s)$
(see \cite[2.6.10]{GM20}). Since the centraliser of $s$ in $\bG^*/\bZ(\bG^*)$ is
connected, the argument in the proof of the corresponding statement in
Lemma~\ref{lem:elt} gives~(a). For $s$ a 2-element, $\chi$ lies in the
principal 2-block by \cite[Thm.~B]{En00}. Since $\bG^*$ is of simply connected
type, $C_{\bG^*}(s)$ is connected, so by
the definition of $\chi$ as a linear combination of Deligne--Lusztig characters
\cite[Def.~12.4.2]{DM20}, the values of $\chi$ only involve 2-power roots of
unity, and thus $\chi$ is $\sigma$-stable, whence~(c). Part~(d) is a direct
consequence of the degree formula \cite[Cor.~2.6.6]{GM20} for Lusztig's Jordan
decomposition.   \par
For~(e) we again use \cite{Ru21}. Observe that $\bG$ is of adjoint type. In the
setting of \cite[3.1]{Ru21} choose $\phi_0\in\Irr((\FF_{q^N},+))$ by
decomposing $(\FF_{q^N},+)=(\FF_p,+)\oplus C$ such that $-C=C$ and defining
$$\phi_0(a):=\begin{cases} \exp(2\pi\bi\, a/p)& \text{if $a\in\FF_p$,}\\
                           1& \text{if $a\in C$.}\end{cases}$$
As $\sigma$ complex conjugates odd order roots of unity,
$\tw\sigma\phi_0(a)=\overline{\phi_0(a)} =\phi_0(-a)$ for all $a\in \FF_{q^N}$
and so in the notation of \cite[Lemma 3.2]{Ru21},
$$\tw\sigma\phi_i(x_i(a))=\tw\sigma\phi_0(c_ia)=\phi_0(-c_ia)=\phi_i(x_i(-a))
  \qquad \text{for all $a\in \FF_{q^N}$}.$$
Let $F_0:\bG\to\bG$ denote a Frobenius map with respect to an $\FF_p$-structure
and such that $F=F_0^f\rho$ for some $f\ge1$ and some graph automorphism
$\rho$ of $\bG$. Let $\bT\le\bG$ be a maximally $F_0$-split torus. 
Since $\bG$ is of adjoint type there is $\tilde t\in\bT^{F_0}$ such that
$\alpha_i(t)=-1$ for all simple roots $\alpha_i$ and thus
$x_i(a)^{\tilde t}=x_i(-a)$ for all $a\in \FF_{q^N}$.
This means that $\tw\sigma\phi_i(x_i(a))=\phi_i(x_i(a)^{\tilde t})$ for all $i$
and so $\tw\sigma\phi=\phi^{\tilde t}$ for all $\phi\in\Irr(\bU/[\bU,\bU]^F)$.

Now note that $\tilde t$ is invariant under all graph automorphisms of $\bG$
and so $d(\tilde t)\tilde t^{-1}=1$ for all graph and field automorphisms
$d$ of $\bG^F$. Thus Assumption~6.6 in \cite{Ru21} is satisfied. Then
\cite[Prop.~6.8]{Ru21} shows that the semisimple character $\chi$ has a
$\sigma$-stable extension as claimed.
\end{proof}

\begin{prop}   \label{prop:An}
 Theorem~{\rm\ref{thm:gunter 2+4}} holds for $S=\PSL_n(q)$ and $\PSU_n(q)$ with
 $n\ge2$ and $q$ odd.
\end{prop}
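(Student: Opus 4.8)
The plan is to apply Lemma~\ref{lem:2-elt} to a well-chosen semisimple $2$-element $s$ of the dual group and then to propagate the resulting character from the inner-diagonal subgroup $\hat G:=G\cap\bG^F$ up to $G$ by Clifford theory, using part~(e) for the field automorphisms. Throughout $\bG=\PGL_n$ is of adjoint type with $\bG^*=\SL_n$ simply connected, where for $\PSU_n(q)$ the map $F$ is the twisted Frobenius, so that $\bG^F=\mathrm{PGU}_n(q)$ and $\bG^{*F}=\SU_n(q)$, and $S=[\bG^F,\bG^F]$. Since $G/S$ is a $2$-group, $\hat G/S$ is a cyclic $2$-subgroup of the diagonal outer automorphism group $\mathrm{Outdiag}(S)$ (of order dividing the $2$-part of $\gcd(n,q-1)$, resp.\ $\gcd(n,q+1)$), and $G/\hat G$ is generated by field automorphisms of $2$-power order and, for $\PSL_n(q)$ with $n\ge3$, the graph automorphism.

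Assume $n\ge3$. I take for $s$ a semisimple element of order~$4$ whose eigenvalues on the natural module form the multiset $\{\omega,\omega^{-1},1^{n-2}\}$, where $\omega$ is a primitive fourth root of unity; concretely $s$ is the block sum of a determinant-$1$ rank-$2$ rotation block and the identity, so $s\in\bG^{*F}$. Comparing eigenvalue multisets shows that $\zeta s$ is $\bG^*$-conjugate to $s$, for a scalar $\zeta\in\bZ(\bG^*)$, only when $\zeta=1$; hence the image of $s$ in $\bG^*/\bZ(\bG^*)$ has connected centraliser, and Lemma~\ref{lem:2-elt}(a)--(c) apply: the associated semisimple character $\chi\in\Irr(\bG^F)$ restricts irreducibly to $S$, lies in $B_0(\bG^F)$, and is $\sigma$-stable. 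For the degree I invoke Lemma~\ref{lem:2-elt}(d): since $C_{\bG^{*F}}(s)$ is, up to the determinant condition, a rank-$2$ torus times $\GL_{n-2}(q)$ (resp.\ $\GU_{n-2}(q)$), a short computation of $[\bG^{*F}:C_{\bG^{*F}}(s)]_2$ shows $s$ is not central in a Sylow $2$-subgroup, so $\chi(1)$ is even. Restricting, $\chi|_{\hat G}\in\Irr(\hat G)$ is $\sigma$-stable of even degree, lies in $B_0(\hat G)$, and does not lie over $1_S$ because $\chi|_S\ne 1_S$.

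To reach $G$ I first note that the multiset $\{\omega,\omega^{-1}\}$ is stable under every power of the Frobenius $\lambda\mapsto\lambda^p$, so the class of $s$ is invariant under all field automorphisms; equivalently $\chi$ is invariant under the dual field automorphisms. Thus for each $2$-power field automorphism $\gamma$ occurring in $G/\hat G$, Lemma~\ref{lem:2-elt}(e) provides a $\sigma$-stable extension of $\chi$, and together these yield a $\sigma$-stable extension of $\chi$ to the full field-inertia subgroup of $G$, still of even degree, lying in the principal block and not over $1_S$. For $\PSU_n(q)$ there is no graph automorphism, and this already produces the required character of $G$. For $\PSL_n(q)$ with $n\ge3$ it remains only to cross the graph automorphism, and here $\chi$ is graph-invariant: the rotation block satisfies $\omega^{-1}=-\omega$, so the multiset $\{\omega,\omega^{-1},1^{n-2}\}$ is inverse-stable, whence $\chi$ is real and fixed by the transpose-inverse automorphism.

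The main obstacle is therefore to extend $\chi$ $\sigma$-stably across the order-$2$ graph automorphism $\tau$. An extension of $\chi$ to $G$ exists by the standard extendibility of semisimple characters of type~$A$, but the two extensions across $\langle\tau\rangle$ differ by the sign character, and $\sigma$ either fixes each or interchanges them; one must pin down a $\sigma$-fixed extension by a canonical (determinantal-order) choice together with a Gauss-sum/root-of-unity analysis of its values, in the spirit of the arguments of \cite{Ru21} underlying part~(e). Alternatively, one may seek a genuinely non-real field-invariant $2$-element $s$ (an eigenvalue orbit that is Frobenius-stable but not inverse-stable), so that $\chi$ is not graph-invariant and the graph automorphism is crossed by a clean irreducible induction, with even degree and $\sigma$-stability then automatic from Lemma~\ref{lem:2-elt}(c); reconciling field-invariance with non-realness of such an $s$ is itself the delicate point. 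Finally I would dispatch the cases left aside: $S=\PSL_2(q)\cong\PSU_2(q)$ with $q\equiv\pm1\pmod8$ (non-abelian Sylow $2$-subgroups, no graph automorphism), where I take for $s$ a cyclic $2$-element of order at least~$8$ in a maximal torus, so that $s\not\sim s^{-1}$ forces connected centraliser in $\PGL_2$ and $s$ avoids the centre of the quaternion Sylow $2$-subgroup; and the finitely many small groups for which the above eigenvalue choice degenerates, checked directly from \cite{GAP} and \Chevie~\cite{Chv}.
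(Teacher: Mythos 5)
Your overall strategy (a semisimple $2$-element $s$ in the dual group, Lemma~\ref{lem:2-elt}, then Clifford theory over the inner-diagonal subgroup) is the right one, and your verification of connected centraliser and even degree for the order-$4$ element is fine. But there is a genuine gap, and you have identified it yourself without closing it: for $S=\PSL_n(q)$ with $n\ge3$ your element $s$ is deliberately chosen to be invariant under all field automorphisms and under inversion, so the resulting character is stable under the full group of $2$-power outer automorphisms, and you are forced to \emph{extend} it $\sigma$-stably across the graph automorphism $\tau$. Lemma~\ref{lem:2-elt}(e) only covers automorphisms induced by Frobenius maps commuting with $F$, so it does not reach $\tau$, and your "Gauss-sum/root-of-unity analysis in the spirit of \cite{Ru21}" is a placeholder, not an argument. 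Since the two extensions across $\langle\tau\rangle$ differ by the sign character, which is $\sigma$-fixed, $\sigma$ could a priori swap them, and nothing you write rules this out. The proof is therefore incomplete at exactly the step the proposition is hardest.

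The paper resolves this by taking the opposite tack, which is essentially your own "alternative": it chooses $s\in\SL_n(q)$ with eigenvalues $\zeta,\zeta^q,\zeta^{-q-1}$ (and $1$'s), where $\zeta$ generates the Sylow $2$-subgroup of $\FF_{q^2}^\times$, so that the class of $s$ modulo scalars is \emph{not} fixed by any non-trivial $2$-power field automorphism nor by its product with transpose-inverse. Then the stabiliser of $\chi_S$ in $\Out(S)$ is exactly the diagonal automorphisms, the character extends to $G\cap\PGL_n(q)$ by restriction of $\chi$ and induces irreducibly to $G$; even degree and $\sigma$-stability of the induced character are then automatic from Lemma~\ref{lem:2-elt}(c), with no extension problem to solve. (Your worry about "reconciling field-invariance with non-realness" is moot: one wants non-invariance under everything outside the diagonal part, not field-invariance.) Note also that this choice degenerates precisely for $n=4$, $q\equiv3\pmod4$, where the paper switches to the unipotent character labelled by $(2^2)$ and uses \cite{DM85} together with \cite{SF19} to get a rational extension across the graph automorphism; if you pursue your route you would need an analogous explicit device, not just a reference to \cite{GAP} and \Chevie. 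The $n=2$ case (take $s$ of order $8$ in $\SL_2(q)$, $q\equiv\pm1\pmod8$) and the unitary case (eigenvalues $\zeta,\zeta^{-q},\zeta^{q-1}$) you have essentially right.
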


\begin{proof}
When $n=2$, since Sylow 2-subgroups of $S$ are assumed non-abelian we have
$q\equiv\pm1\pmod8$. Here from the character table in \cite[Tab.~2.6]{GM20}
one sees that any element $s\in\bG^{*F}\cong\SL_2(q)$ of order 8 will satisfy
the assumptions of Lemma~\ref{lem:2-elt}.

So now assume $n\ge3$. Let $\zeta\in\FF_{q^2}^\times$ be a generator of the
Sylow 2-subgroup. Let
$s\in\SL_n(q)$ be an element with eigenvalues $\zeta,\zeta^q,\zeta^{-q-1}$
and all other eigenvalues equal to~1. Then $s$ is a 2-element that
is conjugate to its $q$th power, but not to any other of its primitive powers
modulo scalars, unless $n=4$ and $q\equiv3\pmod4$. Let us for the moment
exclude that latter case. Then the image of the conjugacy class of $s$ in
$\PSL_n(q)$ is not invariant under non-trivial field automorphisms of
$\PSL_n(q)$. The transpose-inverse graph automorphism acts by inverting the
eigenvalues of semisimple elements, and thus it ensues that the class of $s$
is neither invariant under products of field automorphisms with this graph
automorphism. Furthermore, the
image of $s$ has connected centraliser in $\bG^*/\bZ(\bG^*)\cong\PGL_n$. Let
$\chi\in\Irr(\PGL_n(q))$ be the semisimple character associated to~$s$. Then
$\chi\in\Irr_\sigma(B_0(\PGL_n(q)))$ by Lemma~\ref{lem:2-elt} and $\chi$
restricts irreducibly to $S=\PSL_n(q)$. By what we said above about $s$,
$\psi:=\chi_S\in\Irr_\sigma(B_0(S))$ is not invariant under 2-power order field
or graph automorphisms of $S$. Thus, the full
stabiliser of $\psi$ in $\Out(S)$ is the group of diagonal automorphisms. So
for any $G$ as in Theorem~\ref{thm:gunter 2+4}, $\psi$ extends to
$\tilde\chi:=\chi_{G\cap\PGL_n(q)}\in\Irr(G\cap\PGL_n(q))$ and from there
induces irreducibly to $G$, whence $\tilde\chi^G$ is as desired.
\par
Now assume that $n=4$ and $q\equiv3\pmod4$. Since $q$ is not a square,
$S=\PSL_4(q)$ does not have even order field automorphisms. Let $\chi$ be the
unipotent character of $\PGL_4(q)$ labelled by the partition~$(2^2)$. Then
$\chi$ has even degree, lies in the principal 2-block and restricts irreducibly
to~$S$. Furthermore, $\chi$ extends to a rational character $\tchi$ of the
extension of $\PGL_4(q)$ by the graph automorphism, by
\cite[Thm.~II.3.3]{DM85} combined with \cite[Thm~3.8]{SF19}. Then $\tchi_G$ is as desired.
\par
Similarly, we let $s\in\SU_n(q)$ be an element with non-trivial eigenvalues
$\zeta,\zeta^{-q},\zeta^{q-1}$; then precisely the same
argument as before applies to show our claim for $S=\PSU_n(q)$.
\end{proof}

\begin{prop}   \label{prop:exc}
 Theorem~{\rm\ref{thm:gunter 2+4}} holds for $S$ of exceptional
 Lie type in odd characteristic, not of type $\tw{(2)}E_6$ or $E_7$.
\end{prop}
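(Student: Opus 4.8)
The plan is to produce the required character by applying Lemma~\ref{lem:2-elt} to a carefully chosen $2$-element $s\in\bG^{*F}$ and then extending (or inducing) the resulting semisimple character $\chi$. After Proposition~\ref{prop:Ree} the groups still to be treated are $G_2(q)$, $\tw{3}D_4(q)$, $F_4(q)$ and $E_8(q)$ with $q$ odd. For each of these $\bG=\bG^*$ has trivial centre and is at once simply connected and adjoint, so $S=\bG^F$; and since $\bG^*$ is simply connected, every semisimple element of $\bG^{*F}$ has connected centraliser, so the connectedness hypothesis of Lemma~\ref{lem:2-elt} holds automatically for \emph{any} $2$-element~$s$. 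As $\bZ(\bG)=1$ there are no diagonal automorphisms, and by \cite[Thm.~2.5.1]{GLS} the only outer automorphisms of $2$-power order are field automorphisms, with the single exception of the exceptional graph automorphism of $G_2$ in characteristic~$3$ (whose square is a Frobenius map). Thus it suffices to exhibit a $2$-element $s$ with \emph{even}-degree semisimple character whose class is stable under the relevant field automorphisms, and then to carry $\chi$ up to $G$.

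For $S=F_4(q)$ and $S=E_8(q)$ I would take $s$ to be a non-central involution, of centraliser type $A_1C_3$ and $A_1E_7$ respectively. Comparing the $2$-parts of $|C_{\bG^*}(s)^F|$ and of $|\bG^{*F}|$ — where it is essential to include the Lang--Steinberg ($H^1$) correction to the order of the finite centraliser — shows that $C_{\bG^*}(s)^F$ does \emph{not} contain a full Sylow $2$-subgroup (in contrast to the $B_4$-, resp.\ $D_8$-type involution, which is $2$-central), so $\chi(1)$ is even by Lemma~\ref{lem:2-elt}(d). An involution is automatically rational, hence its class is fixed by every field automorphism; as these groups have no graph automorphism of $2$-power order in odd characteristic, $\chi$ is $G$-invariant and Lemma~\ref{lem:2-elt}(e) provides a $\sigma$-stable extension $\tilde\chi\in\Irr(G)$, necessarily of even degree and not lying over $1_S$.

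The groups $G_2(q)$ and $\tw{3}D_4(q)$ need more care, since here the available involutions are $2$-central — their centraliser, of type $A_1\tilde A_1$ for $G_2$ and $A_1(q)\times A_1(q^3)$ for $\tw{3}D_4$, absorbs a full Sylow $2$-subgroup once the $H^1$-correction is applied — so the corresponding $\chi$ has odd degree and is useless. Instead I would take $s$ of order~$4$ in a suitable maximal torus whose connected centraliser is a proper subgroup of type $A_1\cdot T$; a direct comparison of $2$-parts then gives $\chi(1)$ even. Because $-1$ lies in the Weyl group of $\bG$ (of type $G_2$, resp.\ $D_4$) and is $F$-fixed, $s$ is real, so $s\sim s^{-1}=s^3$ and the class of $s$ is rational, hence field-stable. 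For $G_2(q)$ with $q=3^f$ one cannot make this class invariant under the exceptional graph automorphism $\rho$ (which interchanges the long and short root subsystems, and hence the two possible centraliser types), but this causes no difficulty: letting $G_\chi$ be the stabiliser of $\chi$ in $G$, the quotient $G_\chi/S$ is generated by a genuine Frobenius map, so Lemma~\ref{lem:2-elt}(e) extends $\chi$ $\sigma$-stably to some $\eta\in\Irr(G_\chi)$, and then $\eta^G\in\Irr(G)$ is $\sigma$-invariant (as $\eta^\sigma=\eta$), of even degree, and lies over $\chi\ne 1_S$.

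In every case the output is a character in $\Irr_\sigma(B_0(G))$ — the principal block because it covers $B_0(S)=B_0(\bG^F)\ni\chi$ — of even degree and not lying over $1_S$, which is exactly the conclusion of Theorem~\ref{thm:gunter 2+4}. The main obstacle is the second step: the explicit determination, in each family, of a $2$-element that is simultaneously real (so that its class is field-stable) and \emph{not} $2$-central (so that $\chi$ has even degree). This forces the dichotomy between the involution argument, valid for $F_4$ and $E_8$, and the order-$4$ argument needed for $G_2$ and $\tw{3}D_4$, and it is precisely here that the Lang--Steinberg corrections to the centraliser orders must be handled with care.
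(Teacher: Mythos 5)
Your proposal rests on the same key tool as the paper --- Lemma~\ref{lem:2-elt} applied to a non-$2$-central $2$-element, with separate treatment of the exceptional graph automorphism of $G_2$ in characteristic~$3$ --- and its conclusions are essentially right, but you do far more work than is needed and two of your justifications are faulty. The paper's generic argument is simply: take \emph{any} non-$2$-central $2$-element $s$ (one exists since the Sylow $2$-subgroups are non-abelian); Lemma~\ref{lem:2-elt}(c) gives $\sigma$-stability of the semisimple character \emph{unconditionally} in odd characteristic, because its values involve only $2$-power roots of unity, so the reality of $s$ is irrelevant; and part~(e) yields a $\sigma$-stable extension to the stabiliser, hence after induction a $\sigma$-invariant character of $G$, whether or not the class of $s$ is field-stable. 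Your whole dichotomy --- $A_1C_3$- resp.\ $A_1E_7$-type involutions for $F_4$ and $E_8$ versus order-$4$ elements for $G_2$ and $\tw3D_4$ because the involutions there are $2$-central --- is factually correct but superfluous. Two genuine complaints. First, ``$s$ is rational, hence its class is fixed by every field automorphism'' is a non sequitur: rationality concerns power maps, not field automorphisms (the involution classes of $F_4(q)$ and $E_8(q)$ are indeed field-stable, but because each is the unique class with its centraliser type); the slip is harmless only because of part~(e). Second, for $\tw3D_4$ your assertion that $\bG$ is ``at once simply connected and adjoint'' is false ($\bZ(\mathrm{Spin}_8)\cong\mu_2\times\mu_2$), and the hypothesis of Lemma~\ref{lem:2-elt} is connectedness of $C(s)$ in $\bG^*/\bZ(\bG^*)$, i.e.\ in the \emph{adjoint} group, where centralisers of semisimple elements need not be connected; the coincidence of the finite groups $\bG_{\rm sc}^F$ and $\bG_{\rm ad}^F$ for triality-twisted $D_4$ does not by itself deliver that hypothesis, so it must be checked for your chosen $s$. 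Finally, for $G_2(3^f)$ the paper proceeds differently from you: rather than invoking part~(e) on the field-automorphism part of the stabiliser, it picks $s$ of maximal $2$-power order in the short-root $A_1$ of an $A_1^2$ subsystem, so that \emph{every} non-trivial $2$-power outer automorphism moves the class of~$s$; the semisimple character then induces irreducibly from $S$ all the way to $G$ and its $\sigma$-invariance is immediate. Your route through the stabiliser also works, but note that you must (and implicitly do) rule out products of the graph automorphism with field automorphisms from $G_\chi$ as well, since part~(e) is stated only for automorphisms induced by genuine Frobenius maps.
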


\begin{proof}
Let $S$ be simple of exceptional type. By Proposition~\ref{prop:Ree} we may
assume that $S=[\bG^F,\bG^F]$ for $\bG, F$ as above. Let $s\in\bG^{*F}$ be
a 2-element that is not 2-central. Since $\bZ(\bG^*)=1$ the centraliser of
$s$ in the simply connected group $\bG^*/\bZ(\bG^*)=\bG^*$ is connected. If
moreover $G$ only contains automorphisms induced by Frobenius maps, then by
Lemma~\ref{lem:2-elt}, the semisimple character $\chi\in\cE(\bG^F,s)$ does the
job. So we are left with the groups of type $G_2$ in characteristic~3.
\par
For $S=G_2(3^n)$ we need to discuss extensions $G$ involving the exceptional
graph automorphism $\sigma$ of order~2, or its product with a field
automorphism. Let $H\le \bG^{*F}\cong\bG^F=S$ be a subsystem subgroup of type
$A_1^2$. Let $s\in H$ be an element of maximal 2-power order in the short root
$A_1$-factor. Then $s$ centralises a long root element, thus the image of $s$
under $\sigma$ centralises a short root element, whence neither $\sigma$ nor
its product with a field automorphism  can fix the class of $s$. Moreover,
since $s$ is of maximal 2-power order in $A_1(q)$, its class is not fixed by
any non-trivial 2-power order field automorphism of $S$. Thus, the semisimple
character of $S$ labelled by $s$ induces irreducibly to $G$, lies in the
principal block, and is real as any semisimple element of an $A_1$-type group
is conjugate to its inverse.
\end{proof}

\begin{prop}   \label{prop:non-square}
 Theorem~{\rm\ref{thm:gunter 2+4}} holds for $S$ of Lie type in odd
 characteristic if $q$ is not a square.
\end{prop}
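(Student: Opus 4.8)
Since $q=p^f$ is not a square, $f$ is odd. For the untwisted types $B_n$, $C_n$, $D_n$, $E_7$ the field automorphism group of $S$ is then of odd order $f$, while for the twisted types $\tw2D_n$, $\tw2E_6$ it is cyclic of order $2f$ whose $2$-part is generated by the graph involution $F_0^f$ (with $F_0$ the $p$-power Frobenius). In either case, by \cite[Thm.~2.5.1]{GLS} a Sylow $2$-subgroup of $\Out(S)$ is generated by the diagonal automorphisms together with a single graph involution $\tau$ (where $\tau$ is absent for $B_n$, $C_n$, $E_7$). The plan is to adapt the argument of Proposition~\ref{prop:An}: I will produce a $2$-element $s\in\bG^{*F}$ that is \emph{not} $2$-central, has connected centraliser in $\bG^*/\bZ(\bG^*)$, and whose conjugacy class is moved by $\tau$. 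Lemma~\ref{lem:2-elt} then yields a semisimple character $\chi\in\Irr(\bG^F)$ which is $\sigma$-stable, lies in $B_0(\bG^F)$, restricts irreducibly to $\psi:=\chi_S\in\Irr_\sigma(B_0(S))$, and has even degree because $s$ is not $2$-central.

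\textbf{From $s$ to the character of $G$.} Given such an $s$, the conclusion is uniform. Being the restriction of a character of the adjoint group $\bG^F$, the character $\psi$ is invariant under all diagonal automorphisms; by construction it is moved by $\tau$ and, as $f$ is odd, it is fixed by no nontrivial $2$-element of the field automorphisms. Hence the $2$-part of the stabiliser of $\psi$ in $\Out(S)$ is exactly the diagonal group. For any $G$ as in Theorem~\ref{thm:gunter 2+4}, the restriction $\tilde\chi:=\chi_{G\cap\bG^F}$ is therefore irreducible, and since $G/(G\cap\bG^F)$ consists of field and graph automorphisms of $2$-power order, none of which fixes $\psi$, the character $\tilde\chi$ induces irreducibly to $G$. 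As induction commutes with $\sigma$ and $B_0(G)$ is the only block covering $B_0(S)$ (\cite[Cor.~9.6]{Na98}), the character $\tilde\chi^G$ lies in $\Irr_\sigma(B_0(G))$, has even degree, and does not lie over $1_S$, which is the assertion of Theorem~\ref{thm:gunter 2+4}.

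\textbf{Exhibiting $s$ type by type.} For $B_n$, $C_n$ and $E_7$ there is no graph automorphism, so the $2$-part of $\Out(S)$ is just the diagonal group (of order~$2$) and $G$ is either $S$ or $\bG^F$; here it suffices to take any non-$2$-central semisimple $2$-element with connected centraliser in the adjoint dual --- for the classical types an isometry with a single nontrivial eigenvalue block, and for $E_7$ a suitable torus element --- and to apply Lemma~\ref{lem:2-elt} directly to $\chi$ (if $G=\bG^F$) or to $\chi_S$ (if $G=S$). For $D_n$ and $\tw2D_n$, identifying $\bG^{*F}$ with a special orthogonal group $\SO^{\pm}_{2n}(q)$, I will choose $s$ with eigenvalues a Galois orbit of a $2$-power root of unity together with the inverse orbit, arranged so that the graph automorphism --- which for even $n$ interchanges the two classes of maximal totally singular subspaces and for odd $n$ induces the transpose--inverse map --- does not fix the class of $s$, exactly as in the characteristic-$2$ computation of Proposition~\ref{prop:char 2}. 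For $\tw2E_6$ I will take a $2$-element inside a suitable subsystem subgroup and verify by a root-system computation with \Chevie\ \cite{Chv} that the order-$2$ graph automorphism $\tau=F_0^f$ moves its class.

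\textbf{Main obstacle.} The principal difficulty is the connected-centraliser condition in the adjoint quotient $\bG^*/\bZ(\bG^*)$ precisely in the cases $B_n$, $C_n$, $D_n$ and $E_7$, where $\bZ(\bG^*)\ne1$ and passing to the adjoint group can disconnect centralisers of semisimple elements; this must be checked for each chosen $s$, e.g.\ via \cite[Prop.~14.20]{MT}. The second delicate point is to arrange simultaneously that $s$ is not $2$-central (so that $\chi(1)$ is even, by Lemma~\ref{lem:2-elt}(d)) and that its class is genuinely moved by $\tau$; the explicit eigenvalue descriptions above are designed so that both hold at once, while the $\tw2E_6$ case relies on the machine computation to confirm the graph action.
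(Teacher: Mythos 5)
Your strategy is genuinely different from the paper's: you construct $\sigma$-stable characters as semisimple characters attached to non-$2$-central $2$-elements and then induce (in the spirit of Propositions~\ref{prop:An} and~\ref{prop:square}), whereas the paper exploits the hypothesis that $q$ is not a square in a much more direct way. Since even-order field automorphisms are then absent, only diagonal and graph automorphisms of $2$-power order can occur in $G$, and the paper simply takes an explicit \emph{rational unipotent} character of even degree in $B_0(\bG^F)$ (specific symbols for $B_n$, $C_n$, $D_n$, $\tw2D_n$, and $\phi_{20,2}$, $\phi_{4,1}$, $\phi_{210,6}$ for $E_6$, $\tw2E_6$, $E_7$), which restricts irreducibly to $S$, lies in the principal block by \cite{En00}, and extends to a \emph{rational} character across the order-$2$ graph automorphism by \cite[Cor.~II.3.4]{DM85} together with \cite[Thm~3.8]{SF19}. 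This sidesteps every difficulty your route has to confront.

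The gap in your proposal sits exactly at the point you flag as the main obstacle, and your proposed remedy does not work. For types $B_n$, $C_n$, $D_n$ and $E_7$ the fundamental group of $\bG^*/\bZ(\bG^*)$ has order $2$ (or $4$ for $D_n$), so \cite[Prop.~14.20]{MT} is vacuous for $2$-elements and can never certify connectedness of $C_{\bG^*/\bZ(\bG^*)}(\bar s)$; you would have to argue case by case that $\bar s$ is not quasi-isolated, or directly that $s$ is not $\bG^*$-conjugate to $sz$ for $1\ne z\in\bZ(\bG^*)$. Moreover, the device the paper uses for these types when $q$ is a square --- an element of order~$8$ in $\bZ(\bL)^F$ for a split Levi $\bL$, combined with \cite[Tab.~2]{Bo05} --- is unavailable here, since for $q\equiv3\pmod4$ (which occurs when $q$ is not a square) split tori have $2$-part of order only~$2$; so a genuinely new choice of $s$ with a new connectedness verification is required, and you have not supplied one. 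Two further points need repair: in odd characteristic $\bG^{*F}$ for $\bG$ adjoint of type $D_n$ is $\mathrm{Spin}_{2n}^\pm(q)$, not $\SO_{2n}^\pm(q)$ (the identification you borrow from Proposition~\ref{prop:char 2} is special to characteristic~$2$, and the paper's Proposition~\ref{prop:square} has to work with the self-dual group $\SO_{2n}^+(q)$ and check that $s$ is a square to descend to $S$); and the $\tw2E_6$ and $E_7$ cases are left to unperformed computations. Only for type $E_6$, where the fundamental group has odd order~$3$, does your appeal to \cite[Prop.~14.20]{MT} actually apply.
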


\begin{proof}
If $q$ is not a square then $S$ does not possess even order field automorphisms.
So in this case only products of diagonal and graph automorphisms can be
present in $G$.
Then we let $\chi$ be a rational unipotent character of $\bG^F$ of even degree
in the principal $2$-block of $\bG^F$. More specifically, for $\bG$ of type
$E_6$ we take $\chi$ to be $\phi_{20,2}$ for the untwisted and $\phi_{4,1}$
for the twisted version, and for $\bG$ of type $E_7$ we take $\phi_{210,6}$. In
view of Proposition~\ref{prop:exc} these are the only groups of exceptional
type we need to discuss. For $\bG$ of classical type we take $\chi$ labelled by
the symbol $\binom{0,1,n}{-}$ for types $B_n$ or $C_n$ ($n\ge2$), by
$\binom{0,1,2,n-1}{-}$ for type $D_n$ ($n\ge4$), by $\binom{1,n-1}{-}$ for
type $\tw2D_n$ with $n\ge4$ even, and by $\binom{0,1,n}{1}$ when $n\ge5$ is
odd. These are of even degree by \cite[Prop.~4.4.15]{GM20}.

All of the above lie in the principal block by \cite{En00}. The restriction of
the unipotent character $\chi$ to $S=[\bG^F,\bG^F]$ is still irreducible, hence
of even degree, and lies in the principal block of $S$. Finally, by
\cite[Cor.~II.3.4]{DM85} combined with \cite[Thm~3.8]{SF19}, $\chi$ extends to
a rational character of the extension of $\bG^F$ with a graph automorphism of
order~2, if such exists. Thus $\chi|_S$ is as required.
\end{proof}

\begin{prop}   \label{prop:square}
 Theorem~{\rm\ref{thm:gunter 2+4}} holds for $S$ of Lie type in odd
 characteristic if $q$ is a square.
\end{prop}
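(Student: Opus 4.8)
The plan is to prove Theorem~\ref{thm:gunter 2+4} for the remaining case where $S$ is of Lie type in odd characteristic and $q$ is a square. The difficulty that distinguishes this case from Proposition~\ref{prop:non-square} is precisely that $S$ now \emph{does} possess even-order field automorphisms: since $q=q_0^2$ (or a higher even power of $q_0$), there is a field automorphism of $2$-power order to contend with, and the strategy of simply taking a graph-stable rational unipotent character will no longer produce a character whose full induction to $G$ is $\sigma$-invariant and of even degree, because such a unipotent character is typically fixed by field automorphisms. **First I would** reduce, via Propositions~\ref{prop:An}, \ref{prop:exc}, and the excluded types $\tw{(2)}E_6$ and $E_7$, to the classical groups of type $B_n$, $C_n$, $D_n$, $\tw2D_n$ (with the linear and unitary types already handled), together with the exceptional types already not excluded, so that the main work concerns the classical matrix groups where the eigenvalue combinatorics of semisimple elements is explicit.

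**The key idea** would be to return to the semisimple-character machinery of Lemma~\ref{lem:2-elt} and construct a $2$-element $s\in\bG^{*F}$ whose conjugacy class is \emph{not} stabilised by the $2$-power-order field automorphism, rather than relying on a unipotent character. Concretely, I would choose $s$ to have eigenvalues forming a single Galois orbit (under $\Gal(\FF_{q^k}/\FF_q)$) of a generator of the relevant Sylow $2$-subgroup of a torus, together with the forced inverse eigenvalues dictated by the orthogonal or symplectic form; the point is that such an orbit of a primitive $2$-power-order root of unity is moved by the Frobenius $x\mapsto x^{q_0}$ underlying the square structure of~$q$, so that the class of $s$ is broken by the field automorphism. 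I would then verify, exactly as in the analogous arguments in Proposition~\ref{prop:char 2} and Proposition~\ref{prop:An}, that (i)~$s$ has connected centraliser in $\bG^*/\bZ(\bG^*)$ so that Lemma~\ref{lem:2-elt}(a)--(c) apply and $\chi_S\in\Irr_\sigma(B_0(S))$, (ii)~$s$ can be chosen non-$2$-central so that $\chi(1)$ is even by Lemma~\ref{lem:2-elt}(d), and (iii)~the relevant graph automorphism either inverts~$s$ (handling reality and invariance) or interchanges the two classes of totally singular subspaces in the $D_n$ case, so that neither a graph automorphism nor its product with a field automorphism fixes the class. Having arranged that the stabiliser of $\psi:=\chi_S$ in $\Out(S)$ consists only of diagonal automorphisms, I would extend $\psi$ to $G\cap\PGL$ and induce irreducibly to $G$, producing the desired even-degree $\sigma$-invariant character of $B_0(G)$ not lying over $1_S$, via Lemma~\ref{lem:odd} or the direct extension-induction argument.

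**The main obstacle** I anticipate is the bookkeeping that guarantees the eigenvalue orbit is genuinely broken by \emph{every} non-trivial $2$-power-order field automorphism and by its products with the graph automorphism, simultaneously in all four classical families and for the surviving exceptional types; the delicate point is that the Sylow-$2$ subgroup of the torus from which $s$ is drawn must be large enough that a $2$-power field automorphism $x\mapsto x^{q_0}$ acts non-trivially on the chosen orbit, which may require tracking congruence conditions on $q$ modulo powers of~$2$ and treating small-rank or small-$q$ exceptions separately (as was necessary with $n=4$, $q\equiv3\pmod4$ in Proposition~\ref{prop:An}). A secondary technical point is ensuring connectedness of $C_{\bG^*}(s)$ in $\bG^*/\bZ(\bG^*)$ for the specific $2$-elements chosen, which I would verify by the same criterion (\cite[Prop.~14.20]{MT}) and by direct inspection of the eigenvalue patterns, as was done in the characteristic-$2$ classical cases.
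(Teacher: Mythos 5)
Your plan diverges structurally from the paper's proof, which splits the square-$q$ case ($q\equiv1\pmod 8$) into three regimes handled by three different devices. For types $B_n$, $C_n$, $E_7$ (and for $E_6$, $D_n$ when no graph automorphism is induced) the paper takes $s$ of order~$8$ in $\bZ(\bL)^F$ for a suitable split Levi subgroup $\bL$ of $\bG^*$, checks connectedness of the centraliser via Bonnaf\'e's classification of quasi-isolated elements, and then does \emph{not} attempt to break the class of $s$ under field automorphisms at all: the $2$-power field automorphisms are absorbed by Lemma~\ref{lem:2-elt}(e), which supplies a $\sigma$-stable extension of the semisimple character to its stabiliser (via Ruhstorfer's results), after which one induces. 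For $E_6$ with a graph automorphism the paper chooses $s$ of order~$8$ central in a Levi of type $D_5$ whose fixed points contain a Sylow $2$-subgroup, so that the resulting character has \emph{odd} degree; the graph automorphism inverts the non-real element $s$, so the character is not $G$-invariant and Lemma~\ref{lem:odd} produces the required even-degree $\sigma$-invariant constituent upon induction. Only for $D_n$ with a graph automorphism does the paper do what you propose to do uniformly: an explicit eigenvalue construction ($t$ with eigenvalues $\zeta,\zeta^q$, $n-2$ copies of $\zeta^2$ and their inverses, $s=t^2$) whose class is broken by every relevant outer automorphism of $2$-power order.

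Your uniform ``break the stabiliser'' strategy is not wrong in principle, but two of its load-bearing claims are left unestablished and are genuinely nontrivial. First, for $E_6$ and $E_7$ you would need a non-$2$-central $2$-element with connected centraliser modulo the centre whose class is simultaneously moved by every non-trivial $2$-power field automorphism (and, for $E_6$, also by the graph automorphism and its products with field automorphisms); you assert this by analogy with the classical eigenvalue computations but give no construction, and the paper's proof conspicuously avoids ever having to produce one --- this is precisely what Lemma~\ref{lem:2-elt}(e) and the odd-degree/Lemma~\ref{lem:odd} trick are for. Second, for the classical types the eigenvalue multiset of a semisimple element is forced to be closed under inversion, so you must rule out that $x\mapsto x^{q_0}$ acts on your chosen eigenvalues as inversion up to conjugacy; this reduces to $q_0\not\equiv\pm1\pmod{(q-1)_2}$, which does hold because $(q_0-1)_2(q_0+1)_2=(q-1)_2$ with one factor equal to~$2$, but it is exactly the congruence bookkeeping you defer, and it is the heart of the matter rather than a side issue. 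There are also descent details you gloss over (arranging $s$ to be a square so the character factors through the simple group, and extending across diagonal automorphisms). In short: your route would likely succeed for $B_n$, $C_n$, $D_n$, $\tw2D_n$ after this extra work, but for $E_6$ and $E_7$ you should either exhibit the elements explicitly or adopt the paper's two devices, which make breaking the class under field automorphisms unnecessary.
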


\begin{proof}
Here we have $q\equiv1\pmod8$. For $S$ of type $E_7$, $B_n$ ($n\ge2$) or
$C_n$ ($n\ge3$), let $\bL$ be a split Levi subgroup of~$\bG^*$ of type $E_6$,
$C_{n-1}$, $B_{n-1}$ respectively. Let $s\in \bZ(\bL)^F$ of order~8. As
$|\bZ(\bG^*)|=2$ the image of $s$ in $\bG^*/\bZ(\bG^*)$ has order at least~4 and
hence $C_{\bG^*}(s)$ is connected (otherwise, since $\bL$ is a maximal Levi
subgroup, $s$ would have to be quasi-isolated, but by \cite[Tab.~2]{Bo05}
there do not exist quasi-isolated elements of order at least~4 in $\bG^*$.
Inspection of the order formulae shows $s$ is not 2-central, so we
may conclude with Lemma~\ref{lem:2-elt}.   \par
Assume $\bG$ is of type $E_6$. If $G$ does not involve a graph automorphism
(of order~2) then we may argue exactly as in the proof of
Proposition~\ref{prop:exc}. If $G$ does induce a graph automorphism, and hence
$\bG^F=E_6(q)$, then let $s\in\bG^{*F}$ be an element of order~8 in the centre
of a split Levi subgroup $\bL$ of $\bG^*$ of type $D_5$. Since $\bL^F$
contains a Sylow 2-subgroup of $\bG^{*F}$ (by inspection of the order
formulae), the corresponding semisimple character $\chi$ of $S$ lies in
$\Irr_\sigma(B_0(S))$ and has odd degree by Lemma~\ref{lem:2-elt}. Also, since
$C_{\bG^*}(s)^F=L= N_{\bG^{*F}}(L)$, and $s$ is central in $L$, $s$ is not
conjugate to its inverse in $\bG^*$. But a calculation in the root system shows
that the graph automorphism of $\bG^*$ acts by inversion on $\bZ(\bL)$ and hence
inverts~$s$. Thus Lemma~\ref{lem:odd} applies.   \par
For $\bG$ of type $D_n$, $n\ge4$, let $\bL$ be a split Levi subgroup of $\bG^*$
of type $D_{n-1}$ and again $s\in \bZ(\bL)^F$ of order~8. If either
$n$ is even, or $\bG^F$ is of twisted type, then $\bZ(\bG^*)^F$ is elementary
abelian and so the image $\bar s$ of $s$ in $\bG^*/\bZ(\bG^*)$ has order at
least~4. According to \cite[Tab.~2]{Bo05} there are no quasi-isolated elements
of that order with centraliser containing $\bL$, so $\bar s$ has connected
centraliser. When $n$ is odd and $\bG^F$ is untwisted, let $\bL$ be a split
Levi subgroup of $\bG^*$ of type $A_{n-1}$ and again $s\in \bZ(\bL)^F$ of
order~8. In either case we can then conclude as before unless $G$ also induces
a graph automorphism of order~2.   \par
In the latter case, $\bG^F$ is untwisted. Let $\zeta\in\FF_{q^2}^\times$ be a
generator of the Sylow 2-subgroup (of order at least~16). Let
$t\in H:=\SO_{2n}^+(q)$ be an element with eigenvalues $\zeta,\zeta^q$ and
$n-2$ times $\zeta^2$, and their inverses, in the natural matrix representation.
Then $s:=t^2$ is not invariant under non-trivial 2-power order field
automorphisms. Furthermore $s$ has centraliser $\GL_1(q)^2\GL_{n-2}(q)$ in $H$
as well as in $\GO_{2n}^+(q)$, since $s$ has no eigenvalues $\pm1$, which means
that its class is not invariant under the graph automorphism of order~2 induced
by $\GO_{2n}^+(q)$. Since the graph automorphism preserves the eigenvalues of
semisimple elements, this then shows that the class of~$s$ is also not
invariant under products of field automorphisms with the graph automorphism.
Let $\chi\in\Irr_\sigma(B_0(\SO_{2n}^+(q)))$
be the corresponding semisimple character (noting that $H=\SO_{2n}^+(q)$ is
self-dual), see Lemma~\ref{lem:2-elt}. Since $s$ has connected centraliser,
$\chi$ restricts irreducibly to the derived subgroup of $\SO_{2n}^+(q)$.
Moreover, as $s$
is a square in $H$, it lies in $[H,H]$, so $\chi$ has $\bZ(\SO_{2n}^+(q))$ in
its kernel and can be considered as a character of $S=\OO_{2n}^+(q)$. Again
since $s$ has connected centraliser, $\chi$ also extends to a $\sigma$-stable
character of the adjoint type group $\bG^F$. By our earlier remarks on $\chi$,
all of these characters have trivial stabiliser in the group generated by graph
and field automorphisms.
So their induction to $G$ provides a character as claimed.
\end{proof}

By the classification of finite simple groups, the proof of
Theorem~\ref{thm:gunter 2+4} is now complete.

\section{$2$-closed groups}   \label{sec:thm B}
In this section, we improve on the It\^o--Michler Theorem for $p=2$ which
asserts that a finite group $G$ has a normal and abelian Sylow 2-subgroup
if and only if all the irreducible complex characters of $G$ have odd degree.
As we have mentioned,
our main result in this section and its proof resemble Theorem~A of \cite{DNT},
where it is shown that if all real-valued irreducible characters of $G$ have
odd degree then $G$ is 2-closed. Of course our Galois automorphism $\sigma$
behaves like complex conjugation but only on odd order roots of unity. This
difference allows us to fully generalise It\^o--Michler in both directions
(which cannot be done by using complex conjugation) but poses additional
complications. One of them arises from the fact that, unlike
complex conjugation, $\sigma$ does not act on $p$-Brauer characters for $p$
odd, as shown by the group $2.\fA_6.2_2$ in characteristic~3, for example.
 
Our notation for characters follows \cite{Isaacs} and \cite{Na18}, and our
notation for blocks and Brauer characters follows \cite{Na98}.
\medskip
 
We shall use below that if $\gamma\in\Gal(\QQ^\ab/\QQ)$ complex-conjugates
odd-order roots of unity, $n\ge 1$ is any integer, and $\QQ_n$ is the $n$-th
cyclotomic field, then the restriction $\tau=\gamma|_{\QQ_n}$ has order a power
of~2. This follows from the fact that $\tau^2$ fixes $2'$-roots of unity and
the Galois group $\Gal(\QQ_n/\QQ_{n_{2'}})$ is a 2-group. (In this paper, $n_2$
is the largest power of 2 dividing $n$, and $n_{2'}=n/n_2$.)
\medskip
 
Recall that $\Gal(\QQ^\ab/\QQ)$ permutes the $p$-blocks of any finite group
$G$ (but as we said, not the $p$-Brauer characters).
 
\begin{lem}\label{bl}
 Suppose that $\sigma \in\Gal(\QQ^\ab/\QQ)$ complex-conjugates odd-order roots
 of unity. Let $G$ be a finite group. Then the principal $2$-block of $G$ is the
 only $\sigma$-invariant $2$-block of $G$ with maximal defect.
\end{lem}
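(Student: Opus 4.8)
\emph{The plan is} to reduce the statement about $\sigma$ to one about complex conjugation, then to parametrise the blocks of maximal defect and exploit a rigidity of $2$-regular elements centralising a Sylow $2$-subgroup.

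First I would observe that for \emph{any} $2$-block $B$ one has $B^\sigma=\overline B$, the complex-conjugate block. Indeed, a $2$-block is determined by the values of its central character on the $2$-regular class sums, and if $x$ is $2$-regular then $\chi(x)$ lies in $\QQ_m$ with $m=o(x)$ odd; by the remark preceding the lemma $\sigma$ acts on $\QQ_m$ as complex conjugation, so $\chi^\sigma(x)=\overline{\chi(x)}=\bar\chi(x)$ for every $\chi\in\Irr(B)$. Hence the central characters of $B^\sigma$ and of $\overline B$ agree on all $2$-regular classes, forcing $B^\sigma=\overline B$. Thus $B$ is $\sigma$-invariant if and only if $B$ is real, and since $B_0(G)$ is real it suffices to prove that the principal block is the only real $2$-block of maximal defect.

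Next I would parametrise the blocks of maximal defect. Writing $P\in\Syl_2(G)$, the Sylow subgroup is central in $\cent G P$, so $\cent G P=\zent P\times R$ with $R=\oh{2'}{\cent G P}$ of odd order. By Brauer's extended first main theorem, the $2$-blocks of $G$ with defect group $P$ correspond bijectively to the $\norm G P$-orbits of blocks of $\cent G P$ of defect group $\zent P$, that is, to the $\norm G P$-orbits on $\Irr(R)$; here the principal block corresponds to the orbit of $1_R$, and complex conjugation of blocks corresponds to the involution $\lambda\mapsto\overline\lambda=\lambda^{-1}$ on $\Irr(R)$. So a maximal-defect block is real precisely when its orbit $\mathcal O\subseteq\Irr(R)$ satisfies $\overline{\mathcal O}=\mathcal O$, and I must show the only such orbit is $\{1_R\}$.

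The crux, and the step I expect to be the main obstacle, is the following rigidity: no non-trivial $r\in R$ is $G$-conjugate to $r^{-1}$. For such an $r$ centralises $P$, so $P\le\cent G r$ and $|\cent G r|_2=|G|_2$; if $r^g=r^{-1}$ then $g$ induces inversion, an automorphism of order $2$, on $\langle r\rangle$, whence $2$ divides $|\norm G{\langle r\rangle}:\cent G r|$ and $|\norm G{\langle r\rangle}|_2>|G|_2$, which is absurd. Consequently the only inversion-stable $\norm G P$-orbit on the conjugacy classes of $R$ is $\{1\}$, and by Brauer's permutation lemma, applied to the action of $\norm G P$ together with the involution $\lambda\mapsto\overline\lambda$ on $\Irr(R)$ and on the classes of $R$, the only inversion-stable orbit on $\Irr(R)$ is $\{1_R\}$. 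This identifies the unique real, equivalently $\sigma$-invariant, maximal-defect block as $B_0(G)$. The delicate point to get right is the final transfer: one must check that the parametrisation of maximal-defect blocks, the complex-conjugation action, and the permutation lemma are mutually compatible, so that reality of a block matches reality of the associated orbit on $\Irr(R)$ and not merely reality of the orbit on conjugacy classes.
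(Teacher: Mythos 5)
Your proof is correct, but it takes a genuinely different route from the paper's. The paper reduces to the case $P\unlhd G$ via the uniqueness in Brauer's first main theorem together with the third main theorem, picks $\theta\in\Irr(\oh{2'}G)$ in a block covered by $B$, uses the fact that $\sigma|_{\QQ_{|G|}}$ has $2$-power order to replace the element $g$ with $\theta^\sigma=\bar\theta=\theta^g$ by a $2$-element (which, lying in the normal Sylow subgroup, centralises $\oh{2'}G$), and then concludes $\theta=\bar\theta=1$ by Burnside's theorem on real characters of odd-order groups, finishing with Fong's theorem. You instead observe at the outset that $B^\sigma=\overline{B}$ for every $2$-block (a clean global reduction the paper only makes implicitly at the level of $\theta$), parametrise the maximal-defect blocks by $\norm GP$-orbits on $\Irr(R)$ with $R=\oh{2'}{\cent GP}$ via the extended first main theorem, and replace the character-theoretic Burnside step by the element-level rigidity that no non-trivial odd-order element centralising a Sylow $2$-subgroup can be conjugate to its inverse, transferring back to characters with Brauer's permutation lemma. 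Both are valid implementations of ``real plus odd order implies trivial'': the paper's reduction to $P\unlhd G$ is shorter and needs less block-theoretic machinery, while yours isolates a reusable group-theoretic fact and makes the relation between $\sigma$-invariance and reality of blocks explicit. One small slip: you write $\overline{\lambda}=\lambda^{-1}$ on $\Irr(R)$, which only makes sense for linear $\lambda$ ($R$ need not be abelian); since your argument only ever uses complex conjugation of characters, nothing is harmed.
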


\begin{proof}
Suppose that $B$ is a $\sigma$-invariant 2-block of $G$ with defect group
$P\in \Syl_2(G)$. Let $b$ be its Brauer first main correspondent.
It easily follows that $(b^\sigma)^G=B^\sigma$, since Galois action commutes
with Brauer induction. By the uniqueness in Brauer's first main theorem,
and the third main theorem, we may assume that $P \unlhd G$. Now, let
$\theta\in\irr{\oh{2'}G}$ such that the 2-block $\{\theta\}$ is covered by $B$. 
Then $\{\theta^\sigma\}$ is covered by $B^\sigma=B$, and it follows that there
exists $g \in G$ such that $\theta^\sigma=\bar\theta=\theta^g$. Now, using that
the restriction of $\sigma$ to $\QQ_{|G|}$ has 2-power order, we may assume
that $g$ has 2-power order. Then $g \in P$ and $[g, \oh{2'} G]=1$. Therefore,
$\theta$ is the trivial character, by Burnside's theorem (see Problem 3.16 of
\cite{Isaacs}). Then $B$ is the principal block of $G$, for instance by Fong's
theorem \cite[Thm.~10.20]{Na98}.
\end{proof}
 
\begin{lem}   \label{overunder}
 Let $N\unlhd G$ be finite groups. Suppose that $G/N$ has order not divisible
 by some prime $p$. Let $\sigma\in\Gal(\QQ_{|G|}/\QQ)$ have order a power
 of~$p$.
 \begin{enumerate}[\rm(a)]
  \item If $\chi \in \irrs G$, then all the irreducible constituents of
   $\chi_N$ are $\sigma$-invariant.
  \item Suppose that $p=2$, and $\sigma$ complex-conjugates odd-order roots of
   unity. Suppose that $\theta \in \irrs N$. Then there exists a unique
   $\chi\in\irrs G$ over $\theta$. Furthermore, if $\theta$ is $G$-invariant,
   then $\chi_N=\theta$. Also, $\chi$ belongs to the principal $2$-block of $G$
   if and only if $\theta$ belongs to the principal $2$-block of $N$.
 \end{enumerate}
\end{lem}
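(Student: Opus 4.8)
The plan is to prove (a) and (b) separately, reducing the several assertions in (b) to the case where $\theta$ is $G$-invariant. For part (a), write $\chi_N=e\sum_{i=1}^m\theta_i$, where $\Theta:=\{\theta_1,\dots,\theta_m\}$ is the set of distinct irreducible constituents. By Clifford's theorem $\Theta$ is a single $G$-orbit, and since $N\le G_{\theta_1}$ its size $m=|G:G_{\theta_1}|$ divides $|G:N|$, hence is prime to $p$. Because Galois action commutes with $G$-conjugation, $(\psi^g)^\sigma=(\psi^\sigma)^g$, and because $(\chi_N)^\sigma=(\chi^\sigma)_N=\chi_N$, the automorphism $\sigma$ permutes $\Theta$ and does so $G$-equivariantly. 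Thus $\sigma$ induces an element of the automorphism group of the transitive $G$-set $\Theta\cong G_{\theta_1}\backslash G$, which is $N_G(G_{\theta_1})/G_{\theta_1}$, a group of order dividing $m$ and so prime to $p$. As this induced element has order dividing that of $\sigma$, a power of $p$, it is trivial, whence $\theta_i^\sigma=\theta_i$ for all $i$. I expect (a) to be routine once this commuting-action viewpoint is in place.

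For (b) I would first reduce to the invariant case. The Clifford correspondence $\Irr(G_\theta|\theta)\to\Irr(G|\theta)$, $\psi\mapsto\psi^G$, is a bijection commuting with $\sigma$ (since $G_\theta$ and $\theta$ are $\sigma$-fixed), so the $\sigma$-invariant characters of $G$ over $\theta$ correspond to the $\sigma$-invariant characters of $G_\theta$ over $\theta$. As $G_\theta/N$ again has odd order, and $G_\theta=G$ precisely when $\theta$ is $G$-invariant, it suffices to prove the following: for $G$-invariant $\theta$ there is a unique $\sigma$-invariant $\chi\in\Irr(G|\theta)$, and it satisfies $\chi_N=\theta$.

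The hard part, and the crux of the whole lemma, will be showing that such a $G$-invariant $\theta$ \emph{extends} to $G$. I would argue through the obstruction class $\omega\in H^2(G/N,\mathbb{C}^\times)$ of the character triple $(G,N,\theta)$, so that $\theta$ extends if and only if $\omega=1$. This class has order dividing $|G/N|$, which is odd. On the other hand, since $\theta^\sigma=\theta$ the triple is $\sigma$-stable, giving $\omega^\sigma=\omega$; but $\sigma$ acts on $H^2(G/N,\mu_{|G/N|})$ through its action on the odd-order coefficients $\mu_{|G/N|}$, that is, by inversion, so $\omega^\sigma=\omega^{-1}$. Hence $\omega^2=1$, and being also of odd order, $\omega=1$; note this avoids appealing to the Feit--Thompson theorem. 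Once $\theta$ extends, Gallagher's theorem gives $\Irr(G|\theta)=\{\beta\chi_0:\beta\in\Irr(G/N)\}$ for a fixed extension $\chi_0$, which I may take $\sigma$-invariant by the previous step; then $\beta\chi_0$ is $\sigma$-invariant if and only if $\beta\in\irrs{G/N}$. Since $G/N$ has odd order, a $\sigma$-fixed $\beta$ is real-valued and hence trivial, so $\irrs{G/N}=\{1_{G/N}\}$. Therefore $\chi_0$ is the unique $\sigma$-invariant character over $\theta$, and $(\chi_0)_N=\theta$, which also yields the ``furthermore'' clause.

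Finally, for the block statement I would invoke Lemma~\ref{bl}. The block of $\chi$ always covers the block of $\theta$. If $\theta\in B_0(N)$, then since $G/N$ is a $2'$-group a Sylow $2$-subgroup $P$ of $G$ lies in $N$, so every defect group is contained in $N$; as the block of $\chi$ covers $B_0(N)$, of full defect $P\in\Syl_2(G)$, its defect group is forced to be $P$. This block is $\sigma$-invariant because $\chi$ is, hence of maximal defect, and Lemma~\ref{bl} identifies it as $B_0(G)$. Conversely, if $\chi\in B_0(G)$, then the block of $\theta$ is covered by $B_0(G)$; since $B_0(N)$ is $G$-invariant it is the only block of $N$ covered by $B_0(G)$, forcing $\theta\in B_0(N)$.
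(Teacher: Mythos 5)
Your proof is correct in substance, and parts (a) and the block statement run essentially as in the paper: (a) is the same fixed-point-plus-$G$-equivariance argument (your detour through the automorphism group of the transitive $G$-set $\Theta$ is an equivalent packaging of ``a $p$-group acting on a set of size prime to $p$ has a fixed point''), and the block claim is handled exactly as we do, via covering, maximal defect and Lemma~\ref{bl}. Where you genuinely diverge is the heart of (b), the extendibility of a $G$-invariant $\sigma$-fixed $\theta$. The paper proceeds by induction on $|G:N|$ through a chain of normal subgroups of prime index in $G/N$ --- which tacitly invokes the Feit--Thompson theorem to know such a chain exists --- and at each cyclic step produces the $\sigma$-fixed extension by the twist $\chi=\nu\tau$ with $\nu^2=\lambda$ where $\tau^\sigma=\lambda\tau$. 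You instead kill the obstruction class $\omega\in H^2(G/N,\mathbb{C}^\times)$ in one stroke: $\omega$ is fixed by $\sigma$ by naturality, yet $\sigma$ inverts the odd-order coefficients, so $\omega=\omega^{-1}$ and $\omega=1$ since its order divides the odd number $|G/N|$. This is slicker and, as you note, avoids Feit--Thompson; its cost is the extra cohomological machinery (one must know the obstruction class is annihilated by $|G/N|$ and can be represented by a cocycle with values in $\mu_{|G/N|}$). One small point to tighten: after Gallagher you assert the base extension $\chi_0$ ``may be taken $\sigma$-invariant by the previous step,'' but the obstruction argument only yields \emph{some} extension. You still need a one-line argument here --- either the paper's $\nu^2=\lambda$ twist, or the observation that the extensions of $\theta$ form a set of odd cardinality $|(G/N)^{\mathrm{ab}}|$ permuted by the $2$-group $\langle\sigma\rangle$, hence containing a fixed point. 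With that inserted, the uniqueness via $\rho^\sigma=\bar\rho$ and Burnside is exactly as in the paper.
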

 
\begin{proof}
For (a), let $\theta \in \Irr(N)$ be an irreducible constituent of $\chi_N$,
and let $\Omega$ be the set of all the distinct $G$-conjugates of $\theta$.
Notice that $|\Omega|$ divides $|G:N|$, and therefore has size not divisible
by $p$. Since $\chi$ is $\langle\sigma\rangle$-invariant, we have that
$\langle \sigma \rangle$ acts on $\Omega$. Therefore, there is
$\eta \in \Omega$ which is $\sigma$-invariant.
Now, if $g\in G$, then $(\eta^g)^\sigma=(\eta^\sigma)^g=\eta^g$, and so all
elements of $\Omega$ are $\sigma$-invariant.

Next, we prove (b) by induction on $|G:N|$. To prove the first two
assertions, by the Clifford correspondence, we may assume that $\theta$ is
$G$-invariant. If $N=G$, there is nothing to prove. Else, since $G/N$ has odd
order, there is $N\le M \unlhd G$ of prime index in $G$. By induction, there
exists a unique $\sigma$-invariant $\psi \in \irr M$ over~$\theta$.
Furthermore, $\psi_N=\theta$. 
By uniqueness, we have that $\psi$ is $G$-invariant, and therefore $\psi$ (and
$\theta$) extends to $G$, because $G/M$ is cyclic. Let $\tau\in\Irr(G)$ be any
extension of $\theta$. Then $\tau^\sigma=\lambda \tau$, for a unique linear
$\lambda\in\irr{G/N}$ by the Gallagher correspondence \cite[Cor.~6.17]{Isaacs}.
Since $|G/N|$ is odd, there exists a linear $\nu \in \irr{G/N}$ such that
$\nu^2=\lambda$. Now, for $\chi=\nu\tau \in \Irr(G)$ we have
$$\chi^\sigma=(\nu\tau)^\sigma=\nu^{-1}\tau^\sigma=\nu^{-1}\lambda\tau
  =\chi \, .$$
Also, if $\rho \in \irr{G/N}$ and $\rho\chi$ is $\sigma$-invariant, then $\rho$
is $\sigma$-invariant, by the uniqueness in the Gallagher correspondence. 
However, $\rho^\sigma=\bar\rho$, and since $G/N$ has odd order, we obtain that
$\rho=1$, by Burnside's theorem (see Problem 3.16 of \cite{Isaacs}).

If $\chi$ belongs to the principal block of $G$, then $\theta$ belongs to the
principal block of $N$ by \cite[Thm.~9.2]{Na98}. Conversely, assume that
$\theta$ belongs to the principal block of $N$. Since $G/N$ has odd order, it
follows that $\chi$ lies in a block of maximal defect of $G$ (using, for
instance, \cite[Thm.~9.26]{Na98}). Now, the result follows from Lemma \ref{bl}.
\end{proof}
 
From now on, $\sigma\in\Gal(\QQ^\ab/\QQ)$ is the Galois automorphism that
fixes $2$-power roots of unity and complex conjugates odd-order roots of unity.
Of course, $\sigma^2=1$.
\medskip
 
Assuming Proposition~\ref{gunter 3}(a) and~(c), and Theorem~\ref{thm:gunter 2+4}
(on almost-simple groups), the proof of Theorem~B is standard and follows the
proof of \cite[Thm.~4.2]{DNT}.
 
\begin{thm}   \label{dnt}
 Let $G$ be a finite group, and let $P\in \Syl_2(G)$. Then all characters in
 $\irrs G$ have odd degree if and only if $P$ is normal in $G$ and abelian.
\end{thm}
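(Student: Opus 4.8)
The plan is to prove Theorem~\ref{dnt} (Theorem~B) by establishing the two implications separately, with the hard direction being that the hypothesis on $\irrs G$ forces $P$ normal and abelian. The easy direction is nearly immediate: if $P\unlhd G$ and $P$ is abelian, then $B_0(G)$ has abelian defect group $P$, so all its irreducible characters have height zero, i.e. odd degree; moreover every $\sigma$-invariant irreducible character lies in a $\sigma$-invariant $2$-block of maximal defect, which by Lemma~\ref{bl} is forced to be $B_0(G)$. Hence $\irrs G\subseteq\Irr(B_0(G))$ consists of odd-degree characters. (One must check that non-principal blocks cannot contribute $\sigma$-invariant characters of even degree; this is exactly what Lemma~\ref{bl} delivers.)

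\medskip

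For the forward direction I would argue by induction on $|G|$, assuming every $\chi\in\irrs G$ has odd degree and aiming to show $P\unlhd G$ and $P$ abelian. The first reduction is to kill the odd-order radical $\oh{2'}{G}$. Using Lemma~\ref{overunder}(b), $\sigma$-invariant characters of $G/\oh{2'}{G}$ lift uniquely to $\sigma$-invariant characters of $G$ of the same degree, so the hypothesis passes to the quotient $\bar G=G/\oh{2'}{G}$; by induction we may assume $\oh{2'}{G}=1$, so that $\soc(G)$ is a direct product of the minimal normal subgroups, each either an elementary abelian $2$-group or a product of copies of a nonabelian simple group $S$ of even order. The crux is to rule out nonabelian simple composition factors appearing in a way that injects an even-degree $\sigma$-invariant character. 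Here is where Proposition~\ref{gunter 3}(a),(c) and Theorem~\ref{thm:gunter 2+4} enter: given a component $S$ (or its product of conjugates) in $G$, these results produce a $\sigma$-invariant character $\chi_1$ of even degree in the relevant almost-simple quotient, which one induces/extends up to a $\sigma$-invariant even-degree character of $G$, contradicting the hypothesis. The mechanism for pushing a $\sigma$-invariant character of $\soc(G)$ up to $G$ without losing $\sigma$-invariance or parity of degree is Lemma~\ref{overunder}(a) together with Clifford theory; the $\sigma$-invariance of a suitable constituent is guaranteed because $\langle\sigma\rangle$ permutes the $G$-orbit of constituents and the orbit has $2$-power size issues controlled as in the lemma.

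\medskip

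Once nonabelian composition factors are excluded, $\soc(G)$ is a $2$-group, and since $\oh{2'}{G}=1$ a standard argument (e.g. $\cent{G}{\oh2{G}}\le\oh2{G}$) shows $G$ has a normal Sylow $2$-subgroup $P=\oh2{G}$. It then remains to force $P$ abelian. For this I would suppose $P$ is nonabelian and produce a $\sigma$-invariant even-degree character: since $P\unlhd G$ with $G/P$ of odd order, Lemma~\ref{overunder}(b) gives a degree-preserving bijection between $\irrs P$ and $\sigma$-invariant characters of $G$ lying over $G$-invariant characters of $P$; as $P$ is a $2$-group all of its characters are $\sigma$-invariant, and a nonabelian $P$ has an irreducible character of even (in fact positive $2$-power) degree, whose induced/extended character to $G$ again has even degree and is $\sigma$-invariant, the contradiction we want.

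\medskip

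The main obstacle, as in \cite{DNT}, is the almost-simple analysis feeding the inductive step: one must ensure that the even-degree $\sigma$-invariant character produced on a component genuinely survives the passage to $G$ as an even-degree $\sigma$-invariant character, controlling both the Galois action under Clifford induction and the parity of the degree through the extension/induction. The delicate point is that $\sigma$ is \emph{not} complex conjugation, so real-valuedness is not available; the $\sigma$-invariance must be tracked via the explicit field-of-values statements in Proposition~\ref{gunter 3} and Theorem~\ref{thm:gunter 2+4} (values in $\QQ_{2^a}$ or real values of semisimple characters labelled by real elements), and through the uniqueness arguments of Lemma~\ref{overunder}. Everything else is the by-now-routine Clifford-theoretic bookkeeping of the \cite{DNT} template.
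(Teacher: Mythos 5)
There are genuine gaps in the forward direction, the most serious being your opening reduction. You cannot ``kill $\oh{2'}{G}$ by induction'': applying the inductive hypothesis to $G/\oh{2'}{G}$ only yields that $P\oh{2'}{G}/\oh{2'}{G}$ is normal and abelian, i.e.\ that $P\oh{2'}{G}\unlhd G$ and $P$ is abelian; it does \emph{not} give $P\unlhd G$, since normality of a Sylow subgroup does not descend from a quotient by an odd-order normal subgroup. (Lemma~\ref{overunder}(b) also points the other way: it concerns $N\unlhd G$ with $G/N$ of odd order, not quotients by odd-order subgroups.) The paper reduces in the opposite direction, to $\Oh{2'}{G}=G$ (the hypothesis passes down to a normal subgroup of odd index, and normality ascends from there), and must then confront the case where the unique minimal normal subgroup $N$ has \emph{odd} order. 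That case is handled by a separate construction you have omitted: for an involution $x\in P$ one finds $\lambda\in\Irr(N)$ with $\nu:=\lambda^{-1}\lambda^x\ne1$, observes $\nu^\sigma=\bar\nu=\nu^x$, and uses the canonical extension of $\nu$ to $G_\nu$ to manufacture a $\sigma$-invariant induced character of even degree. This is one of the essential ideas inherited from \cite{DNT}, and the proof does not close without it.

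A second gap is your concluding step. From ``$\soc(G)$ is a $2$-group and $\oh{2'}{G}=1$'' one cannot deduce $P=\oh{2}{G}\unlhd G$: the group $\fS_4$ has $\cent{G}{\oh{2}{G}}\le\oh{2}{G}$ and no components, yet is not $2$-closed. The character-theoretic hypothesis must be invoked again here; in the paper this is built into the induction (one first gets $PN\unlhd G$ for the unique minimal normal subgroup $N$ by induction on $G/N$, so the case of $N$ a $2$-group is immediate, and one is reduced to $G=PN$ with $\cent{P}{N}=1$ --- which is also what makes $H/SC$ a $2$-group so that Theorem~\ref{thm:gunter 2+4} is applicable in the almost-simple case). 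Two smaller points: the converse direction is simply It\^o's theorem (a normal abelian Sylow $2$-subgroup forces \emph{all} irreducible character degrees to be odd), and your detour through Lemma~\ref{bl} rests on the false premise that every $\sigma$-invariant character lies in a block of maximal defect. Your argument that $P\unlhd G$ forces $P$ abelian, via Lemma~\ref{overunder}(b) and Clifford theory, does match the paper.
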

 
\begin{proof}
Suppose that $P \unlhd G$ is abelian. Then all elements of $\Irr(G)$ have
odd degree by It\^o's theorem \cite[Thm.~6.15]{Isaacs}.
 
Now suppose that all characters in $\irrs G$ have odd degree. We claim that if
$P$ is normal in $G$, then $P$ is abelian. To see this, let $\theta\in\irr P$.
By Lemma~\ref{overunder}(b), let $\chi\in\irrs G$ be over $\theta$. By
hypothesis, $\chi$ has odd degree, and by Clifford's theorem we conclude that
$\theta$ is linear.

We prove by induction on $|G|$ that $P$ is normal in $G$. 
We may assume that $P>1$. Arguing by induction, we may assume that $G$ has a
unique minimal normal subgroup $N$ and that $PN \unlhd G$.
In particular, we may assume that $N$ is not a 2-group.
Using Lemma~\ref{overunder}(b), we may assume that $\Oh{2'} G=G$. 
Therefore, we have $PN=G$ and $\cent PN=1$. 

Assume that $N$ has odd order. Let $x \in P$ be an involution, and notice that
there exists $\lambda \in \irr N$ such that $\nu:=\lambda^{-1}\lambda^x\ne1$. 
Then $\nu^x=\bar\nu=\nu^\sigma$, and if $\mu\in \irr{G_\nu}$, is the canonical
extension of $\nu$ to $G_\nu$ \cite[Cor.~6.28]{Isaacs}, then
$\mu^\sigma=\mu^x$, by uniqueness.
It follows that $\mu^G \in \Irr(G)$ is $\sigma$-invariant.
By hypothesis, $\mu^G$ has odd degree, and therefore
$G_\nu=G$, $\bar\nu=\nu$, and this is not possible because $|N|$ is odd.

Hence, we may assume that $N=S^{x_1} \times \cdots \times S^{x_t}$, where
$S\unlhd N$ is non-abelian simple, and $\{x_1,\ldots,x_t\}$ is a complete set
of representatives of the right cosets of $H=\norm GS$ in $G$, with $x_1=1$.
Write $C=\cent GS$. We know that $H/C$ is almost simple with socle $SC/C$,
and with $H/SC$ a 2-group.
We claim that there exists $\tau \in \irrs{H/C}$ of even degree such that $S$
is not in the kernel of $\tau$. Suppose first that $S$ has non-abelian Sylow
2-subgroups. Then the claim follows from Theorem~\ref{thm:gunter 2+4}.
Suppose now that $S$ has abelian Sylow 2-subgroups. If $SC<H$, then the claim
follows from Proposition~\ref{gunter 3}(c).
We are left with the case where $S$ has abelian Sylow 2-subgroups and $H=SC$.
In this case, we take $\tau=\chi_1$ from Proposition \ref{gunter 3}(a). Now, if
$1 \ne \theta \in \Irr(S)$ is an irreducible constituent of $\tau_S$, we have
that $\eta=\theta \times 1_{S^{x_2}} \times \cdots \times  1_{S^{x_t}}$, lies
under $\tau$ and that $G_\eta \sbs H$. By the Clifford correspondence, we have
that $\tau^G \in \Irr(G)$ is $\sigma$-invariant  of even degree and this is a
contradiction. 
\end{proof}

If $G$ is solvable, we can obtain the normality of $P$ by using the main result
of~\cite{Gr}, where it is proved that if $G$ is solvable, $p$ is a prime,
$\sigma$ is a Galois automorphism of $\Gal(\QQ_{|G|}/\QQ)$ of order $p$, and
all $\Irr_\sigma(G)$ have $p'$-degree, then $G$ has a normal Sylow $p$-subgroup.

\section{Proof of Theorem A}   \label{sec:thm A}
In this section we prove Theorem~A, using Proposition~\ref{gunter 3} and
Theorem~\ref{thm:gunter 2+4}. Of course, Theorem~A improves on Brauer's Height
Zero conjecture for 2-principal block (shown in \cite{NT12b}, and more recently
in \cite{MN21} for every prime). We adapt several arguments from \cite{NT12b}
and \cite{MN21} to our present case.

In the first two results, $p$ is an arbitrary prime. For $B$ a $p$-block
of $G$, $Z \unlhd G$ and $\la\in\Irr(Z)$, we set
$\irr{B|\la}:= \irr{G|\la} \cap \irr B$.

\begin{lem}   \label{central}
 Suppose that $G$ is the central product of subgroups $U_i$, $1 \le i \le t$.
 Let $Z=\bigcap U_i$, $\lambda \in \irr Z$, and assume that $Z$ is a $p$-group.
 Suppose that $B_i$ is the principal $p$-block of $U_i$, and that $B$ is the
 principal $p$-block of $G$. Then there is a natural bijection
 $$\irr{B_1|\lambda} \times \cdots \times \irr{B_t|\lambda}
   \rightarrow \irr{B|\lambda} \, .$$
\end{lem}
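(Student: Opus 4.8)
The plan is to promote the classical character correspondence for central products to a principal-block correspondence by realising $G$ as a genuine direct product modulo a $p$-subgroup. Since $Z=\bigcap_i U_i$ lies in every factor and distinct factors commute elementwise, $Z$ is central in $G$, hence abelian; thus $\lambda$ is linear and each $\chi_i\in\irr{U_i}$ satisfies $(\chi_i)_Z=\chi_i(1)\lambda_i$ for a linear $\lambda_i\in\irr Z$. Let $D:=U_1\times\cdots\times U_t$ and let $\pi\colon D\to G$, $(u_1,\dots,u_t)\mapsto u_1\cdots u_t$, which is a surjective homomorphism since the $U_i$ commute pairwise. By the defining property of the central product, $U_i\cap\langle U_j:j\ne i\rangle=Z$ for each $i$, which forces $\ker\pi=N:=\{(z_1,\dots,z_t)\in Z^t:z_1\cdots z_t=1\}$. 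As $Z$ is a $p$-group by hypothesis, $N$ is a $p$-group, and this is the only place the hypothesis on $Z$ enters.

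Next I would recall the standard parametrisation of $\irr G$ through $\pi$. Every irreducible character of $D$ is a product $\chi_1\times\cdots\times\chi_t$ with $\chi_i\in\irr{U_i}$, and it descends to $G\cong D/N$ exactly when it is trivial on $N$, i.e. when $\prod_i\lambda_i(z_i)=1$ for all $(z_1,\dots,z_t)\in Z^t$ with $\prod_i z_i=1$; this holds iff $\lambda_1=\cdots=\lambda_t$. Fixing this common value to be $\lambda$ identifies $\irr{G|\lambda}$ with $\irr{U_1|\lambda}\times\cdots\times\irr{U_t|\lambda}$, the character of $G$ attached to $(\chi_1,\dots,\chi_t)$ being the central-product character, of degree $\prod_i\chi_i(1)$. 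Distinct tuples give distinct irreducible characters of $D$ that are trivial on $N$, hence descend to distinct irreducible characters of $G$; this is the desired natural bijection at the level of $\irr{\cdot|\lambda}$.

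It remains to check that, under this identification, $\chi\in B_0(G)$ iff every $\chi_i\in B_0(U_i)$; together with $\irr{B_i|\lambda}=\irr{U_i|\lambda}\cap\irr{B_0(U_i)}$ this yields the asserted map. I would obtain the block statement from two standard inputs. For the direct product $D$, the central-character criterion for the principal block — that $\chi$ and $1_D$ have congruent central characters on every class sum modulo a fixed maximal ideal over $p$ — evaluated on a product class sum $\widehat{K_1}\cdots\widehat{K_t}$ with all but one factor the identity class, shows $\chi_1\times\cdots\times\chi_t\in B_0(D)$ iff each $\chi_i\in B_0(U_i)$. For the quotient, since $N$ is a normal $p$-subgroup of $D$ and $G=D/N$, the domination theory of blocks under reduction by a normal $p$-subgroup (see \cite{Na98}) matches $B_0(D/N)$ with $B_0(D)$: a character of $D$ trivial on $N$ lies in $B_0(D)$ iff the corresponding character of $G$ lies in $B_0(G)$. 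Chaining the two equivalences gives the claim.

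The routine verifications — that $\pi$ has the stated kernel and that the central-product character is well defined and irreducible — are bookkeeping. The step I expect to require the most care is the second block-theoretic input: it is crucial that $N$ be a $p$-group, for only then does passage to $D/N$ neither merge distinct blocks nor disturb principal-block membership. This is exactly what the assumption that $Z$ (hence $N$) is a $p$-group secures, and it is the hypothesis I would be most careful to invoke correctly.
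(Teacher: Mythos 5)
Your proof is correct, but it follows a genuinely different route from the paper's. You realise $G$ as $D/N$ with $D=U_1\times\cdots\times U_t$ and $N=\{(z_1,\dots,z_t)\in Z^t: z_1\cdots z_t=1\}$ a central $p$-subgroup, reduce the block statement to the factorisation of blocks of a direct product (via central characters), and then transfer it to $G$ through block domination for the normal $p$-subgroup $N$. The paper instead quotes the character-level bijection directly from \cite[Thm.~10.7]{Na18} and, for the only nontrivial direction (each $\psi_i\in B_i$ implies $\chi\in B$), verifies the principal-block criterion $\sum_{x\in G^0}\chi(x)\ne0$ of \cite[Thm.~3.19, Cor.~3.25]{Na98} by showing that $(x,y)\mapsto xy$ is a bijection $U_1^0\times U_2^0\to G^0$ on $p$-regular elements; that is where the hypothesis that $Z$ is a $p$-group enters for them, exactly as it enters for you in making $N$ a $p$-group. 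Your argument is somewhat more conceptual and in fact identifies all blocks of $G$ over $\lambda$ as products of blocks, not just the principal one; the paper's is shorter and avoids invoking domination theory. One small caveat: your kernel computation relies on $U_i\cap\prod_{j\ne i}U_j=Z$, which does not follow merely from $G=U_1\cdots U_t$, $[U_i,U_j]=1$ and $\bigcap_i U_i=Z$; it is part of the definition of central product being used here (and is equally needed for the paper's appeal to \cite[Thm.~10.7]{Na18} and for its bijection on $p$-regular elements), so this is a definitional point rather than a gap.
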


\begin{proof}
Given $\psi_i \in \irr{U_i|\lambda}$,
we know that there is a unique $\chi \in \irr{G|\lambda}$ such that $\chi$ lies
over every $\psi_i$, and that $(\psi_1, \ldots, \psi_t) \mapsto \chi$ is a
bijection. Furthermore, $\chi(u_1 \cdots u_k)=\psi_1(u_1)\cdots\psi_k(u_k)$ for
$u_i\in U_i$. (See \cite[Thm.~10.7]{Na18}.)
If $\chi$ lies in $B$, then each $\psi_i$ lies in $B_i$ by
\cite[Thm.~9.2]{Na98}, so we only need to prove the converse. We prove it for
$t=2$, which is clearly sufficient.
Since $[U_1, U_2]=1$, $G=U_1U_2$ and $Z \sbs \zent G$ is a $p$-group,
it easily follows that the map $U_1^0 \times U_2^0 \rightarrow G^0$
given by $(x,y) \mapsto xy$ is a bijection, where $G^0$ is the set of
$p$-regular elements of $G$. Now
$$\sum_{(x,y) \in U_1^0 \times U_2^0 } \chi(xy)
  =\left(\sum_{x \in U_1^0}\psi_1(x) \right)
   \left(\sum_{y \in U_2^0}\psi_2(y) \right) \ne 0,$$
using \cite[Thm.~3.19 and Cor.~3.25]{Na98}. Therefore $\chi$ is in the
principal block, again by \cite[Thm.~3.19 and Cor.~3.25]{Na98}.
\end{proof}

The character $\chi$ under the bijection in Lemma \ref{central} is usually
denoted by
$$\chi=\psi_1 \cdots \psi_t \, .$$

We shall use the following surely well-known result (which we used in the
proof of \cite[Thm.~5.4]{MNS}), and whose argument we write down here for the
reader's convenience. Recall that the layer $\bE(G)$ of a finite group $G$
is the product of the components of~$G$, and that
$\bF^*(G)=\bE(G)\bF(G)$ is the generalised Fitting subgroup of $G$.

\begin{lem}   \label{spice}
 Let $G$ be a finite group, let $E=\bE(G)$. Assume that $E\ne 1$ and that
 $\bF(G)$ is central in $G$.
 \begin{enumerate} 
  \item[\rm(a)] We have that
   $$\bigcap_{U} U\cent GU =\bF^*(G), $$
   where $U$ runs over the components of $G$.
  \item[\rm(b)] Suppose that $p$ is a prime dividing $|E|$ but not $|\zent E|$
   and $\oh{p'} G \sbs \zent G$. If $Q\in \Syl_p(E)$, then
   $\bF^*(G)\cent GQ/{\bF^*}(G)$ is solvable.
 \end{enumerate}
\end{lem}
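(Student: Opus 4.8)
The plan is to treat the two parts in order, with part~(a) supplying the essential input for part~(b). Throughout I write $F^*=\bF^*(G)$, and I first record that the hypothesis ``$\bF(G)$ central'' forces $\bF(G)=\zent G$ (indeed $\zent G$ is always a nilpotent normal subgroup, hence contained in $\bF(G)$), so that $F^*=E\zent G$ is a central product of $E$ with the central subgroup $\zent G$.

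For part~(a) the containment $\bF^*(G)\subseteq\bigcap_UU\cent GU$ is immediate: fixing a component $U$, every other component and the central subgroup $\zent G$ centralise $U$, so $E\zent G\le U\cent GU$. For the reverse containment I would take $x$ in the intersection and, for each component $U$, write $x=u_Uc_U$ with $u_U\in U$ and $c_U\in\cent GU$; then conjugation by $x$ agrees on $U$ with the inner automorphism induced by $u_U$, so $x$ normalises every component and acts on each as an inner automorphism. Letting $e\in E$ be the product of the $u_U$ (the factors coming from other components act trivially on $U$), conjugation by $e$ and by $x$ agree on every component, hence on all of $E$, so $xe^{-1}\in\cent GE$. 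The key structural fact I would invoke is that the generalised Fitting subgroup is self-centralising, $\cent G{F^*}=\zent{F^*}$; since $\zent G$ is central one has $\cent GE=\cent G{F^*}=\zent{F^*}\le F^*$, and as $e\in E\le F^*$ this yields $x\in F^*$.

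For part~(b) write $C=\cent GQ$; the goal that $F^*C/F^*$ be solvable is equivalent to solvability of $C/(C\cap F^*)$. I would first record that $Q$ decomposes along the components as $Q=\prod_iQ_i$ with $Q_i=Q\cap U_i\in\Syl_p(U_i)$, each component being normal in $E$ while $\zent E$ has order prime to~$p$. The crucial reduction is that \emph{every} component has order divisible by~$p$: the product $E'$ of the components of $p'$-order is normal in $G$ of order prime to~$p$, hence $E'\le\oh{p'}G\le\zent G$ by hypothesis, forcing the central product of quasisimple groups $E'$ to be trivial. Consequently, for $g\in C$ and any component $U_i$ the nontrivial $p$-group $Q_i$ is centralised by $g$, so $g$ cannot move $U_i$ to a different component $U_j$ (else $Q_i=Q_i^g=Q_j\le\zent E$, impossible); thus $g$ normalises each $U_i$.

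This lets me define a homomorphism $\psi\colon C\to\prod_i\Out(U_i)$ via the conjugation action of $C$ on the components. By part~(a), $\ker\psi=C\cap\bigl(\bigcap_iU_i\cent G{U_i}\bigr)=C\cap F^*$: an element of $C$ inducing an inner automorphism on each $U_i$ lies in $U_i\cent G{U_i}$ for every $i$, hence in $\bF^*(G)$, while conversely every element of $F^*=E\zent G$ acts by inner automorphisms on each component. Therefore $C/(C\cap F^*)\cong\psi(C)\le\prod_i\Out(U_i)$, which is solvable by Schreier's conjecture (a consequence of the classification of finite simple groups). The main obstacle is the reduction showing that $\cent GQ$ normalises every component; this is precisely where the two hypotheses $p\nmid|\zent E|$ and $\oh{p'}G\le\zent G$ enter, and without it the permutation action of $C$ on the components could contribute non-solvable composition factors.
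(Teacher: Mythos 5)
Your proposal is correct and follows essentially the same route as the paper: the self-centralising property of $\bF^*(G)$ for part~(a), and for part~(b) the observation that every component has order divisible by $p$ (via $\oh{p'}G\sbs\zent G$), so that $\cent GQ$ normalises each component and part~(a) plus the Schreier conjecture finish the argument. Your packaging of the last step as a single homomorphism into $\prod_i\Out(U_i)$ with kernel $C\cap\bF^*(G)$ is just a reformulation of the paper's intersection $\bigcap_i(C\cap U_i\cent G{U_i})$.
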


\begin{proof}
Suppose that $\Omega=\{U_1,\ldots,U_n\}$ is the set of the different components
of $G$, so $E=U_1 \cdots U_n$. Write $C_i=\cent G{U_i}$ and $K=\bF^*(G)$.
Let
$$L=\bigcap_{i=1}^n U_iC_i \, .$$
It is well-known that $[U_i,U_j]=1$ (\cite[Thm.~9.4]{Isgt}) and
$[U_i, \bF(G)]=1$ (\cite[Thm.~9.7.c]{Isgt}). In particular, $K \sbs L$.
Suppose that $g \in L$. Write $g=c_iu_i$, where $c_i\in C_i$ and $u_i \in U_i$.
Let $x=u_1 \cdots u_n \in E$. We claim that $e^g=e^x$ for all $e \in E$.
Indeed, if $e=v_1 \cdots v_n$, then
$$e^g=v_1^g \cdots v_n^g=v_1^{c_1u_1} \cdots v_n^{c_nu_n}
  =v_1^{u_1} \cdots v_n^{u_n}=(v_1 \cdots v_n)^{u_1 \cdots u_n}=e^x \, .$$
Then $gx^{-1} \in \cent GE$. Since $\bF(G) \sbs \zent G$, we have
$gx^{-1} \cent G{E\bF(G)}=\cent G{K} \sbs K$ (by using
\cite[Thm.~9.8]{Isgt}). Therefore $g \in K$, showing~(a).

Let $Z=\zent E$. By hypothesis, $p$ does not divide $|Z|$. For~(b), by the
proof of Theorem 9.7 and Lemma 9.6 of \cite{Isgt}, we know that 
$$ E/Z= \prod_{i=1}^n (U_iZ)/Z$$
is a direct product (of non-abelian simple groups).
Let $Q_i=Q \cap {U_iZ}=Q\cap U_i \in \Syl_p(U_i)$. Since $\oh{p'}G\sbs\zent G$,
we have $Q_i\ne 1$. Write $C=\cent GQ$. Let $c \in C$. We claim that~$c$
normalises $U_i$ for all $i$. We have that $c$ permutes the set $\Omega$.
Let $1 \ne y_i \in Q_i$. Since $C \sbs \cent G{Q_i}$, we have $(y_iZ)^c=y_iZ$.
Using that the product is direct, we have that necessarily $c\in\norm G{U_iZ}$. 
Since $(U_iZ)'=U_i$, we conclude that $c \in \norm G{U_i}$. By the Schreier
conjecture, $\norm G{U_i}/U_iC_i$ is solvable. (Recall that $\Out(U_i)$ is a
subgroup of $\Out(U_i/Z_i)$, where $Z_i=Z \cap U_i=\zent{U_i}$.)
Therefore $C/(C \cap U_iC_i)$ is solvable. Hence,
$$C/\Big(C \cap \bigcap_{i=1}^n (U_iC_i) \Big)$$
is solvable. By part~(a), we conclude that $C/(C \cap \bF^*(G))$ is
solvable, as required.
\end{proof}

We are now ready to prove Theorem~A, which we restate:

\begin{thm}   \label{thm:A 2}
 Let $B$ be the principal $2$-block of a finite group $G$, and let
 $P\in\Syl_2(G)$. Then all characters in $\Irr_\sigma(B)$ have odd degree if
 and only if $P$ is abelian.
\end{thm}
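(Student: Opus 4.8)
The plan is to prove Theorem A by reducing to the already-established Theorem~B together with the structural results on almost simple groups. One direction is immediate: if $P$ is abelian then, by the recently completed Brauer Height Zero Conjecture (for the principal block this is in \cite{NT12b}, \cite{MN21}), \emph{all} characters in $\Irr(B)$ have height zero, i.e.\ odd degree, and in particular so do those in $\Irr_\sigma(B)$. The substance is the forward direction: assuming every $\chi\in\Irr_\sigma(B)$ has odd degree, we must deduce that $P$ is abelian. The strategy is to argue by induction on $|G|$ and to show that the hypothesis forces $P$ to be \emph{normal} (via Theorem~B applied to a suitable quotient), after which the abelianness follows.

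\textbf{Reduction to the normal case.} First I would like to pass to $\mathbf{O}^{2'}(G)$, which does not disturb $P$, and dispose of normal subgroups of odd order by Lemma~\ref{overunder}(b): characters of $G/N$ pull back compatibly with $\sigma$ and with the principal block, so no new even-degree $\sigma$-invariant characters are introduced. By an inductive/minimal-counterexample setup I would arrange that $G$ has a unique minimal normal subgroup and analyse $\mathbf{F}^*(G)$. If $\mathbf{F}^*(G)=\mathbf{F}(G)$ is a $2$-group then $P\unlhd G$ and we are in the normal case; otherwise $\mathbf{E}(G)\ne 1$, and here is where Theorem~\ref{thm:gunter 2+4} and Proposition~\ref{gunter 3} enter. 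Exactly as in the proof of Theorem~B, the components $S$ of $G$ each contribute, through the almost-simple quotient $\mathbf{N}_G(S)/\mathbf{C}_G(S)$, a $\sigma$-invariant character $\tau$ of even degree not lying over $1_S$; forming the external product across the orbit of components and inducing (Clifford correspondence), one produces a $\sigma$-invariant character of $G$ of even degree lying in $B$ --- a contradiction. This is what forces the layer to be trivial and hence $P$ normal.

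\textbf{The normal case.} Once $P\unlhd G$, I would apply Theorem~B directly: its hypothesis (all of $\Irr_\sigma(G)$ of odd degree) is not quite what we have --- we only control $\Irr_\sigma(B)=\Irr_\sigma(B_0(G))$ --- but with $P$ normal the principal block accounts for the relevant characters, and Lemma~\ref{overunder}(b) relates block membership for $\sigma$-invariant characters over $\mathbf{O}_{2'}(G)$. Concretely, for $\theta\in\Irr(P)$ one lifts to a $\sigma$-invariant $\chi\in\Irr_\sigma(B)$ over $\theta$ via Lemma~\ref{overunder}(b) (using that $P\unlhd G$ places such $\chi$ in the principal block), whose odd degree forces $\theta$ linear by Clifford's theorem; since this holds for every $\theta$, the group $P$ is abelian.

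\textbf{The main obstacle.} I expect the genuine difficulty to lie not in the almost-simple bookkeeping --- that is packaged cleanly by Theorem~\ref{thm:gunter 2+4} --- but in the \emph{block-theoretic control} needed to keep the manufactured even-degree characters inside the principal $2$-block $B$, rather than merely inside $\Irr_\sigma(G)$. Theorem~B is a statement about \emph{all} of $\Irr_\sigma(G)$, whereas Theorem~A only hands us $\Irr_\sigma(B)$; bridging this gap requires Lemma~\ref{bl} (the principal block is the unique $\sigma$-invariant block of maximal defect) together with Lemma~\ref{central} and Lemma~\ref{spice} to handle central products and to verify that the induced characters genuinely cover $B_0$ of each relevant layer factor. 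Managing the interaction between the central $2$-subgroup $\mathbf{F}(G)$, the components, and the principal-block condition --- ensuring $\sigma$-invariance, even degree, \emph{and} $B_0$-membership simultaneously --- is the step I would budget the most care for.
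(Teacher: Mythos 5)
Your overall architecture has a fatal flaw: you propose to show that the hypothesis forces $P$ to be \emph{normal} in $G$ and then deduce abelianness from the normal case. But the hypothesis of Theorem~A does not imply $P\unlhd G$, and no argument can make it do so: for $G=\fA_5$ (or any simple group with abelian Sylow $2$-subgroups) every character of $B_0(G)$ has odd degree, yet $P$ is not normal. This is exactly the difference between Theorem~A (conclusion: abelian) and Theorem~B (conclusion: normal \emph{and} abelian), and it is why your mechanism for ``forcing the layer to be trivial'' cannot work. Concretely, when a component $S$ has abelian Sylow $2$-subgroups, the even-degree $\sigma$-invariant character $\chi_1$ supplied by Proposition~\ref{gunter 3}(a) is not asserted to lie in $B_0(S)$ (only the possibly odd-degree $\chi_2$ is), so inducing it to $G$ produces an even-degree $\sigma$-invariant character that need not lie in $B$ --- no contradiction arises, and correctly so. You also cannot conclude $P\unlhd G$ from $\bF^*(G)=\bF(G)$ being a $2$-group (consider $\fS_4$).

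The paper's actual proof never attempts normality of $P$. After the reductions you correctly identify ($\Oh{2'}G=G$, $\oh{2'}G=1$, unique minimal normal subgroup $N$), it: (1) rules out $N$ being a $2$-group by a central-product construction (Lemma~\ref{central}, Proposition~\ref{gunter 3}(b), Lemma~\ref{3.1}(a)) applied to the quasi-simple components over a central involution; (2) sets $M=N\cent GQ$ with $Q=P\cap N$ and applies Theorem~B not to $G$ but to the quotient $G/M$, all of whose irreducible characters lie in $B_0(G)$ by \cite[Lemma~3.1]{NT12b}, to get that $G/M$ is a $2$-group; (3) shows $G/N$ is solvable via Lemma~\ref{spice}(b) and hence has a normal $2$-complement; (4) reduces to $G=NP$ by a careful correspondence argument using Alperin's isomorphic-blocks theorem and the Clifford and Isaacs restriction correspondences --- a step entirely absent from your sketch; and (5) reduces to $N=S$ simple, where Theorem~\ref{thm:gunter 2+4} and Proposition~\ref{gunter 3}(c) yield a contradiction precisely when $P$ is non-abelian, while the case $G=S$ with abelian Sylow $2$-subgroups survives, as it must. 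Your instinct that block membership is the delicate point is sound, but the route you chose proves (a false strengthening of) the wrong statement.
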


\begin{proof}
Write $B_0(G)$ for the principal $2$-block of $G$. 
If $P\in\Syl_2(G)$ is abelian, then all irreducible characters in $B_0(G)$ have
odd degree by \cite{NT12b}.

For the converse, we argue by induction on $|G|$. We may assume that $G$ is not
a 2-group. Assume that $N\unlhd G$ has odd index. We claim that all characters
in $\irrs{B_0(G)}$ have odd degree if and only if those in $\irrs{B_0(N)}$ have
odd degree. But this easily follows from Lemma \ref{overunder}, and the fact
that if $\chi \in \Irr(G)$ and $\theta \in \irr N$ lies under $\chi$,
then $\chi(1)/\theta(1)$ divides $|G:N|$ (\cite[Cor.~11.29]{Isaacs}).
Hence, we may assume that $\Oh{2'} G=G$.
Also, since $\oh{2'} G$ is in the kernel of the characters of $B_0(G)$
(\cite[Thm.~6.10]{Na98}), we may also assume that $\oh{2'} G=1$.

Since $B_0(G/N) \sbs B_0(G)$ if $N$ is normal in $G$, by induction, we deduce
that $G$ has a unique minimal normal subgroup $N$, and that $G/N$ has an
abelian Sylow 2-subgroup. Let $K/N=\oh{2'}{G/N}$.

Now, let $Q=P\cap N\in\Syl_2(N)$ and $M=N\cent GQ \unlhd G$. We know that all
irreducible characters of $G/M$ are in $B_0(G)$, by \cite[Lemma~3.1]{NT12b}.
By Theorem~B, we have that $G/M$ has a normal Sylow $2$-subgroup.
Since $\Oh{2'}G=G$, we conclude that $G/M$ is a 2-group. Hence, $K \sbs M$.

Suppose that $N$ is a 2-group. Then $N$ is elementary abelian (because $N$ is a
minimal normal subgroup) and $M=\cent GN$.
Then $K=N \times \oh{2'}K$, by using the Schur--Zassenhaus theorem. Since
$\oh{2'}G=1$, we have that $K=N$. Then $G/N$ is a direct product of non-abelian
simple groups and a 2-group (by \cite[Thm.~1]{Wa69}).
Write $G/N=L/N \times E/N$, where $E/N$ is a $2$-group, and $L/N$ is the direct
product of non-abelian simple groups $X_i/N$. Since
$L'N=L$ and $G$ has a unique minimal normal subgroup $N$, we conclude that
$L'=L$ is perfect. 
Since $E$ is a 2-group, then $N \cap \zent E>1$, and therefore $E \le M$.
Since $G/M$ is a 2-group, necessarily $M=G$, and $N$ is central. Thus $|N|=2$. 
Now, $[L,E,L]=1=[E,L,L]=1$ and by the three subgroups lemma,
we have that $[E,L]=1$. 
Now, $X_i$ is a normal subgroup of $G$, and since $G$ has a unique minimal
normal subgroup $N$, and $X_i'$ is normal in $G$, we conclude that $X_i$ is
perfect for all $i$. Hence $X_i$ is quasi-simple.
By the same argument as before, $[X_i,X_j]=1$, if $i\ne j$.
By Lemma \ref{3.1}(a), every $X_i$ has non-abelian Sylow 2-subgroups. 
Let $1\ne\lambda\in\Irr(N)$. By Proposition \ref{gunter 3}(b), let
$\chi_i \in \irrs{B_0(X_i)|\lambda}$ with even degree. Also, let
$\mu\in\irr{E|\lambda}$. Then, using Lemma~\ref{central}, we have that
$\chi=\chi_1 \cdots \chi_t \cdot \mu \in\irr{B_0(G)|\lambda}$ has
even degree and is $\sigma$-invariant. This is a contradiction, hence $N$
cannot be a 2-group.

We thus may assume that $N=\bE(G)=\bF^*(G)$ is a minimal normal
subgroup of $G$.

Now, $M/N$ is solvable by Lemma~\ref{spice}(b). Thus $G/N$ is solvable and
since it has abelian Sylow 2-subgroups, it follows that $G/N$ has a normal
$2$-complement~$K/N$ (by using the so called Hall--Higman's Lemma 1.2.3 and the
fact that $\Oh{2'}{G/N}=G/N$). Recall that $K \sbs M$, so $K=N\cent KQ$. 

We claim that we may assume that $G=NP$. Let $\tau\in\irrs{B_0(NP)}$, and let
$\theta\in\irr N$ be under $\tau$. Then $\theta^\sigma=\theta^g$ for some
$g\in P$, by Clifford's theorem, and $\theta$ lies in the principal 2-block
of~$N$. Let $\eta\in\irr{B_0(K)}$ be the unique extension of $\theta$ in the
principal block of $K$ (using Alperin isomorphic blocks, \cite{Al}).
By uniqueness, $\eta^g=\eta^\sigma$.
(To prove this, use that $(\eta^\sigma)^{g^{-1}}$ and $\eta$ are extensions
in the principal block of $K$ of $\theta$, so they coincide.)
Let $I$ be the stabiliser of $\eta$ in $G$. Notice that $J=I\cap PN$ is the
stabiliser of $\theta$ in $PN$ (again using uniqueness in Alperin's theorem).
Let $\mu \in \irr J$ be the Clifford correspondent of $\tau$ over $\theta$.
Again by uniqueness, $\mu^\sigma=\mu^g$.
By the Isaacs restriction correspondence \cite[Lemma~6.8(d)]{Na18}, let
$\rho\in\irr{I|\eta}$ be such that $\rho_J=\mu$. By uniqueness of this
restriction map, we again have $\rho^\sigma=\rho^g$. By the Clifford
correspondence, $\chi=\rho^G=(\rho^\sigma)^G \in \Irr(G)$ is $\sigma$-invariant,
and lies in
the principal $2$-block of $G$ (because $\eta$ does and $G/K$ is a $2$-group).
By hypothesis, we have that $\chi$ has odd degree. Thus $I=G$, $\rho=\chi$, and
$\chi_{PN}=\tau$ has odd degree. By induction, we may assume that $G=PN$,
as claimed.

Write $N=S^{x_1} \times \cdots \times S^{x_t}$ where the $x_i$ are
representatives of the right cosets of $H=\norm GS$ in $G$, and $S$ is
non-abelian simple. Write $C=\cent GS$. Assume that $t>1$ (so that $H<G$).
We claim that there exists $\gamma \in \irrs{H/C}$ in the principal block of
$H/C$ (and therefore of $H$), such that $\gamma_S$ does not contain the trivial
character.  If the Sylow 2-subgroups of $S$ are not abelian, this follows from
Theorem~\ref{thm:gunter 2+4}. Assume that $S$ has abelian Sylow 2-subgroups.
If $SC<H$, then this follows from Proposition~\ref{gunter 3}(c), while if
$SC=H$, it follows from the second part of Proposition~\ref{gunter 3}(a).
Now, if $1\ne\xi\in\Irr(S)$ is under $\gamma$, then the stabiliser
$G_\tau\sbs H$, where $\tau=\xi\times 1\times\cdots\times 1\in\irr N$. By the
Clifford correspondence, we obtain $\gamma^G=\chi \in \irrs {B_0(G)}$ (by
\cite[Cor.~6.2]{Na98} and Brauer's third main theorem \cite[Thm.~6.7]{Na98}),
with even degree (using that $H<G$ and that $|G:H|$ is a 2-power).
This is not possible by hypothesis. We conclude that $t=1$. In this case, the
assertion follows by Theorem~\ref{thm:gunter 2+4}.
\end{proof}

\section{$p$-rational characters}   \label{sec:thm C}
In this final section, we prove Theorem C. We start with the following. Recall
that $\chi \in \irr G$ is \emph{$p$-rational} if the field of values
$\QQ(\chi)=\QQ(\chi(g) \mid g \in G)$ of $\chi$ is contained in a cyclotomic
field $\QQ_m$, with $m$ not divisible by $p$.

\begin{thm}   \label{pratnor}
 Suppose that $\chi\in\Irr(G)$ is $p$-rational, with $\chi^0=\vhi\in \IBr(G)$.
 Let $N \unlhd G$ and let $\theta \in \Irr(N)$ be under $\chi$. Then $\theta$
 is $p$-rational.
\end{thm}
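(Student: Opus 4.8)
The plan is to prove the statement by induction on the pair $(|G|,|G:N|)$, ordered lexicographically. Fix $\sigma\in\Gal(\QQ^\ab/\QQ)$ fixing $p'$-roots of unity; since $\Gal(\QQ_{|G|}/\QQ_{|G|_{p'}})$ is a $p$-group we may assume $\sigma$ has $p$-power order, and it suffices to show $\theta^\sigma=\theta$. Two facts drive everything: (i) every irreducible Brauer character takes values in $\QQ_{|G|_{p'}}$, so $\sigma$ fixes all Brauer characters and in particular $(\theta^\sigma)^0=\theta^0$ for every $\theta\in\Irr(N)$; and (ii) since $\chi^\sigma=\chi$ (as $\chi$ is $p$-rational), Clifford's theorem gives $\theta^\sigma=\theta^g$ for some $g\in G$, whence $(\theta^0)^g=(\theta^\sigma)^0=\theta^0$.

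I will repeatedly use the elementary observation that if $\alpha$ is a Brauer character of a subgroup $L\le G$ with $\alpha^G\in\IBr(G)$, then $\alpha\in\IBr(L)$: writing $\alpha=\sum_k c_k\beta_k$ with $\beta_k\in\IBr(L)$ distinct and $c_k\ge 0$, the sum $\alpha^G=\sum_k c_k\beta_k^G$ is a non-negative combination of Brauer characters, so it can equal a single irreducible $\alpha^G$ only if exactly one $c_k$ is nonzero, equals $1$, and has $\beta_k^G$ irreducible. Applying this to the ordinary Clifford correspondent $\psi\in\Irr(T\mid\theta)$ of $\chi$ over $\theta$, where $T=I_G(\theta)$ and $\psi^G=\chi$, and using that reduction modulo $p$ commutes with induction, I obtain $\psi^0\in\IBr(T)$ from $(\psi^0)^G=\chi^0=\vhi$.

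The heart is a sequence of reductions. If $\theta$ is $G$-invariant then $\theta^\sigma=\theta^g=\theta$, so assume $T<G$ and put $W=I_G(\theta^0)\supseteq T$; note $g\in W$ by fact (ii). If $W<G$, then $\Psi:=\psi^W$ is the unique member of $\Irr(W\mid\theta)$ inducing to $\chi$ (Clifford correspondence, since $I_W(\theta)=T$); as $\Psi^\sigma$ lies over $\theta^g$, which is $W$-conjugate to $\theta$, uniqueness forces $\Psi^\sigma=\Psi$, so $\Psi$ is $p$-rational, and $\Psi^0=(\psi^0)^W\in\IBr(W)$ by the observation above, whence induction on $|G|$ applied to $(W,N,\Psi,\theta)$ finishes this case. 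So assume $\theta^0$ is $G$-invariant. Here $\psi^\sigma\in\Irr(T)$ restricts to $N$ as $e\,\theta^g$, a multiple of a single irreducible, which forces $\theta^g$ to be $T$-invariant and hence $g\in N_G(T)$; uniqueness of the Clifford correspondent over the $T$-invariant character $\theta^g$ then gives $\psi^\sigma=\psi^g$. If $N_G(T)<G$, then $\chi':=\psi^{N_G(T)}$ is $p$-rational (because $(\chi')^\sigma=(\psi^g)^{N_G(T)}=(\chi')^g=\chi'$), has $(\chi')^0\in\IBr(N_G(T))$, and lies over $\theta$, so induction on $|G|$ applied to $(N_G(T),N,\chi',\theta)$ concludes.

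This leaves $T\unlhd G$ with $N\le T<G$. If $N<T$, I first apply the inductive hypothesis to $(G,T,\chi,\psi)$ — legitimate since $|G:T|<|G:N|$ — to learn that $\psi$ is $p$-rational, and then apply it to $(T,N,\psi,\theta)$, where $|T|<|G|$ and $\psi^0\in\IBr(T)$, concluding that $\theta$ is $p$-rational. Finally, if $N=T$ then $\psi=\theta$, so $\theta^0=\psi^0\in\IBr(N)$ is an irreducible $G$-invariant Brauer character and $\chi^0=(\theta^G)^0=(\theta^0)^G$; but $(\theta^0)^G$ restricts to $N$ as $|G:N|\,\theta^0$ and so cannot be irreducible when $|G:N|>1$ (a dimension count in the twisted group algebra of $G/N$), contradicting $\chi^0\in\IBr(G)$. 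Hence this configuration is vacuous. The main obstacle I anticipate is the bookkeeping of the two-parameter induction together with checking that each constructed character is $\sigma$-fixed via the correct uniqueness statement; ruling out the last configuration is the one genuinely rigid point, and it is exactly where the hypothesis $\chi^0\in\IBr(G)$, rather than mere $p$-rationality of $\chi$, is indispensable.
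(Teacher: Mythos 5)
Your proof is correct, but it follows a genuinely different route from the paper's. The paper runs a single induction on $|G|$ and passes immediately to the semi-inertia group $T^*=\{g\in G: \theta^g=\theta^\tau \text{ for some } \tau\in\Gal(\QQ(\theta)/\QQ)\}$, invoking the classical identity $\QQ(\psi^{T^*})=\QQ(\chi)$ to reduce to $T^*=G$; this forces $T=I_G(\theta)\unlhd G$ at once, and then a single application of the Clifford correspondence for Brauer characters ($(\psi^0)^G\in\IBr(G)$ with $T\unlhd G$ forces $I_G(\psi^0)=T$, hence $g\in T$ and $\psi^\sigma=\psi^g=\psi$) finishes. You avoid the semi-inertia group and its field-of-values identity entirely, instead descending through $W=I_G(\theta^0)$ and $N_G(T)$ with a two-parameter lexicographic induction; the price is the extra case analysis, in particular the Mackey/endomorphism-algebra count needed to kill the configuration $N=T\unlhd G$. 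Both arguments rest on the same two pillars — uniqueness in the ordinary Clifford correspondence to propagate $\sigma$-invariance, and irreducibility of $\chi^0$ to pin down where the conjugating element $g$ lives — so yours is best viewed as a more elementary but longer unwinding of the same mechanism. Note that once you reach $T\unlhd G$ in your final case you could have concluded directly as the paper does ($(\psi^0)^g=(\psi^\sigma)^0=\psi^0$ and $I_G(\psi^0)=T$ give $g\in T$, so $\theta^\sigma=\theta^g=\theta$), which would render both subcases (4a) and (4b) unnecessary. One minor slip: your opening claim that $\Gal(\QQ_{|G|}/\QQ_{|G|_{p'}})$ is a $p$-group is false for odd $p$ (it is cyclic of order $p^{a-1}(p-1)$ where $p^a=|G|_p$); fortunately the purported reduction to $\sigma$ of $p$-power order is never used, since every step of your argument works for an arbitrary $\sigma$ fixing $p'$-roots of unity, so this does not affect the proof.
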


\begin{proof}
We argue by induction on $|G|$. Let $T^*$ be the set of all $g \in G$ such that
$\theta^g=\theta^\sigma$ for some $\sigma \in \Gal(\QQ(\theta)/\QQ)$.
Let $T\sbs T^*$ be the stabiliser of $\theta$ in $G$, and let $\psi\in\irr T$
be the Clifford correspondent of $\chi$ over $\theta$. Let $\eta=\psi^{T^*}$.
It is well-known that $\QQ(\eta)=\QQ(\chi)$ (see for instance, Problem 3.9 of
\cite{Na18}). Since $\chi^0=(\eta^G)^0=(\eta^0)^G=\varphi\in\IBr(G)$, we deduce
that $\eta^0\in \IBr(T^*)$. If $T^*<G$, then we are done by induction. Hence,
we may assume that $T^*=G$. Notice that in this case $T \unlhd G$, since 
the stabilisers $G_\theta=G_{\theta^\sigma}$ coincide.
 
Assume that $\psi$ is not $p$-rational. Then there exists
$\sigma \in \Gal(\QQ_{|G|}/\QQ_{|G|_{p'}})$ such that $\psi^\sigma \ne \psi$.
Now, $\chi^\sigma=\chi$, and thus $\theta^\sigma=\theta^g$ for some $g \in G$,
by Clifford's theorem. Hence $\psi^\sigma=\psi^g$ by uniqueness in the Clifford
correspondence. Now $(\psi^0)^\sigma=\psi^0$ because $\sigma$ fixes $p'$-roots
of unity, and thus $\psi^0=(\psi^\sigma)^0 =(\psi^g)^0=(\psi^0)^g$.
However, $(\psi^0)^G=(\psi^G)^0=\chi^0 \in \IBr(G)$, and therefore we deduce
that $g\in T$ (by Problem 8.3 in \cite{Na98}.) Then $\psi^\sigma=\psi$,
contrary to assumption. Therefore $\psi$ is $p$-rational. Now, $\psi_N=e\theta$
for some $e\ge1$, and we deduce that $\theta$ is $p$-rational.
\end{proof}

\begin{cor}   \label{kernels}
 Suppose that $\chi\in\Irr(G)$ is $p$-rational for some odd prime $p$,
 with $\chi^0=\varphi \in \IBr(G)$. Then $\ker \varphi \le \ker\chi$.
\end{cor}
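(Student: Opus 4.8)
The plan is to set $N:=\ker\varphi\unlhd G$ and prove the (equivalent, slightly stronger looking) statement that $N\le\ker\chi$; since every irreducible constituent of $\chi_N$ is $G$-conjugate, it suffices to show that any $\theta\in\Irr(N)$ lying under $\chi$ is trivial. The first step is to feed the hypotheses into Theorem~\ref{pratnor}: as $\chi$ is $p$-rational with $\chi^0=\varphi\in\IBr(G)$ and $\theta$ lies under $\chi$, that theorem gives at once that $\theta$ is $p$-rational. This is the only place the previous result is used, and it is what lets oddness of $p$ eventually bite.

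Next I would read off what the hypothesis $N=\ker\varphi$ says about values. By definition of the kernel of a Brauer character, $N$ acts trivially in the module affording $\varphi$, so $\varphi(g)=\varphi(1)$ for every $p$-regular $g\in N$; since $\varphi=\chi^0$ this means $\chi(g)=\chi(1)$ for all such $g$. Writing $\chi_N=e\sum_i\theta_i$ with the $\theta_i$ the distinct $G$-conjugates of $\theta$, and using $\operatorname{Re}\theta_i(g)\le|\theta_i(g)|\le\theta_i(1)$, the equality $\chi(g)=\chi(1)=e\sum_i\theta_i(1)$ forces $\theta_i(g)=\theta_i(1)$ for every $i$. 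In particular every $p$-regular element of $N$ lies in $\ker\theta$.

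It then remains to upgrade ``trivial on $p$-regular elements'' to ``trivial''. Since the $p'$-part of any $x\in N$ is a $p$-regular power of $x$, it lies in $\ker\theta$, so the image of $x$ in $N/\ker\theta$ coincides with the image of its $p$-part; hence $N/\ker\theta$ is a $p$-group. Now $\theta$, viewed as a faithful irreducible character of this $p$-group, has values in $\QQ_{|N/\ker\theta|}$, while $p$-rationality places them in some $\QQ_m$ with $p\nmid m$; as $\QQ_m\cap\QQ_{|N/\ker\theta|}=\QQ$, the character $\theta$ is rational, in particular real valued. Because $p$ is odd, $N/\ker\theta$ has odd order, and by Burnside's classical fact that an odd-order group has no non-principal real-valued irreducible character, $\theta$ must be trivial. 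Therefore $N\le\ker\chi$, as desired.

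The genuinely essential ingredient—and the step I expect to carry the whole weight of the argument—is this last one: reducing $p$-rationality to honest rationality on the quotient $p$-group $N/\ker\theta$ and then invoking vanishing of nontrivial real characters in odd order. Everything before it is routine Clifford theory and unwinding the definition of $\ker\varphi$; it is precisely here that the hypothesis that $p$ be odd is indispensable, consistent with the failure of the statement at $p=2$ noted for $M_{22}$ and $M_{24}$.
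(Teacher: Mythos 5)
Your proof is correct and follows essentially the same route as the paper's: both hinge on Theorem~\ref{pratnor} to obtain the $p$-rationality of $\theta$, reduce to a $p$-group on which $p$-rational forces rational, and finish with Burnside's theorem on real characters of odd-order groups. The only difference is cosmetic: the paper runs an induction on $|G|$ to arrange $\ker\varphi\cap\ker\chi=1$ and concludes that $N=\ker\varphi$ itself is a $p$-group, whereas you pass directly to the quotient $N/\ker\theta$, which slightly streamlines the argument.
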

 
\begin{proof}
We argue by induction on $|G|$. By induction, we may assume that
$\ker\varphi \cap \ker\chi=1$. Let $N=\ker\varphi$. Let $\theta \in \Irr(N)$
be under $\chi$. Also, $\theta^0=e1_N$. Then all $p$-regular elements of $N$
are in $\ker\theta \cap \ker\varphi$, and therefore $N$ is a $p$-group. By
Theorem~\ref{pratnor}  we have that $\theta$ is $p$-rational. Since $N$ is
a $p$-group, we have that $\theta$ is rational valued. Since $p$ is odd,
$\theta=1_N$, as wanted.
\end{proof}
 
We shall prove Theorem~C, based on the following consequence of the
classification of finite simple groups. Here we use $I_G(\theta)=G_\theta$ to
denote the stabiliser in $G$ of the character $\theta$ of a normal subgroup $N$
of $G$, as mentioned before Lemma~\ref{lem:odd}.

\begin{thm}   \label{thm:def 0}
 Suppose that $p$ is an odd prime number. If $S$ is a finite non-abelian simple
 group of order divisible by $p$, then there exists a $p$-rational
 $\eta\in\Irr(S)$ such that $p$ divides $\eta(1)$, $\eta^0 \in\IBr(S)$, and
 $I_{\Aut(S)}(\eta)=I_{\Aut(S)}(\eta^0)$.
\end{thm}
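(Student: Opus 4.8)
The containment $I_{\Aut(S)}(\eta)\subseteq I_{\Aut(S)}(\eta^0)$ holds for \emph{any} lift $\eta$ of $\eta^0$: an automorphism of $S$ permutes the $p$-regular classes and acts on character values accordingly, so reduction modulo $p$ commutes with the $\Aut(S)$-action, $(\eta^\alpha)^0=(\eta^0)^\alpha$. Thus the entire problem reduces to producing a $p$-rational $\eta$ with $p\mid\eta(1)$ and $\eta^0\in\IBr(S)$ for which the \emph{reverse} containment $I_{\Aut(S)}(\eta^0)\subseteq I_{\Aut(S)}(\eta)$ also holds. I would isolate three sufficient conditions for this reverse containment, to be invoked depending on the family of $S$: (i) $\eta$ is $\Aut(S)$-invariant, so both stabilisers equal $\Aut(S)$; (ii) $\eta$ is the \emph{unique} $p$-rational character lifting $\eta^0$, so that $\Aut(S)$ (which preserves fields of values) permutes the $p$-rational lifts and must fix the singleton $\{\eta\}$; or (iii) $\eta$ has $p$-defect zero, whence $\eta$ is the only ordinary character in its block, so any automorphism fixing $\eta^0$ fixes that block and hence $\eta$.

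The proof then proceeds through the classification of finite simple groups. The cleanest case is $S$ of Lie type in its defining characteristic $p$, where I would take $\eta$ to be the Steinberg character $\mathrm{St}$: it is rational-valued (so $p$-rational), has degree $|S|_p$, which is divisible by $p$ since $p\mid|S|$, reduces irreducibly modulo $p$ (so $\mathrm{St}^0\in\IBr(S)$), and is invariant under all of $\Aut(S)$, so the stabiliser condition is immediate by criterion~(i). For $S=\fA_n$ I would exploit that all characters of $\fS_n$ are rational, so $p$-rationality comes for free: I would choose a non-self-conjugate partition $\lambda$ whose character has degree divisible by $p$ and irreducible reduction (a character of $p$-defect zero when one exists, applying criterion~(iii)), descend it to $\fA_n$, and handle the extension to $\Aut(\fA_n)=\fS_n$ (and the exceptional case $n=6$) by standard extendibility arguments. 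The sporadic groups, together with $\tw2B_2$, $\tw2G_2$, $\tw2F_4$ and the small exceptional members, I would settle directly from the known ordinary and $p$-modular character tables in \cite{GAP}.

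The substantial work, and the main obstacle, is $S$ of Lie type in \emph{cross} characteristic $r\ne p$, where neither a universal rational candidate like $\mathrm{St}$ nor automatic uniqueness of lifts is available, and the criteria must be arranged by hand. For $p\ge5$ every simple group has a $p$-block of defect zero (Granville--Ono), which furnishes an $\eta$ with $p\mid\eta(1)$ and $\eta^0\in\IBr(S)$ and makes criterion~(iii) available; the remaining point is to choose such a defect-zero character that is moreover $p$-rational, which I would do through Lusztig's parametrisation together with the action of $\Gal(\QQ^\ab/\QQ)$ on Lusztig series (as in the $p$-rationality computations underlying \cite{GM20}), selecting a semisimple-type character labelled by a $\Gal$-stable $p'$-class. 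The genuinely delicate situation is $p=3$, where defect-zero characters can fail to exist (for certain $\fA_n$, some sporadic groups, and some groups of Lie type): there I would instead exhibit an explicit $3$-rational lift of an irreducible Brauer character of degree divisible by~$3$ and verify $I_{\Aut(S)}(\eta^0)\subseteq I_{\Aut(S)}(\eta)$ via criterion~(ii), establishing uniqueness of the $3$-rational lift from the known $3$-modular decomposition data and the description of the $\Aut(S)$-action on it.

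Assembling these cases over the classification completes the proof. I expect the hardest bookkeeping to lie precisely in the cross-characteristic and $p=3$ situations, where one must secure \emph{simultaneously} the degree-divisibility $p\mid\eta(1)$, the irreducibility of the reduction $\eta^0$, the $p$-rationality of $\eta$, and the stabiliser match; the first three are individually standard, but coupling them with control of $I_{\Aut(S)}(\eta^0)$ is what forces the careful, criterion-by-criterion analysis sketched above.
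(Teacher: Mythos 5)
Your overall architecture matches the paper's: reduce to the reverse containment $I_{\Aut(S)}(\eta^0)\subseteq I_{\Aut(S)}(\eta)$, use defect-zero characters wherever Granville--Ono provides them, and treat the $p=3$ exceptions separately. But you miss the one observation that collapses almost all of your proposed work: a character of $p$-defect zero is \emph{automatically} $p$-rational, since it vanishes on all $p$-singular elements and its values on a $p$-regular element of order $m$ lie in $\QQ_m\subseteq\QQ_{|S|_{p'}}$. So in cross characteristic for $p\ge5$ (and for every group of Lie type at $p=3$, which by \cite{GO96} is \emph{not} among the exceptions --- your "some groups of Lie type" is not correct) there is nothing to arrange: any defect-zero character satisfies all four conditions at once, and the Lusztig-series analysis you sketch is unnecessary. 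This is exactly why the paper's proof is a dozen lines: the only groups left are $Suz$, $Co_3$ (finite check) and the alternating groups.

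The one genuine soft spot is your plan for the $p=3$ alternating exceptions. Your fallback criterion (ii) --- uniqueness of the $p$-rational lift --- is the wrong tool there: the paper's own Example~\ref{exmp:An} exhibits, for $\fA_{p^2}$, two distinct \emph{rational} (hence $p$-rational) lifts of the same irreducible Brauer character, so uniqueness can fail precisely in this family, and there is no "known $3$-modular decomposition data" to consult for all $n$. What actually works is your criterion (i): the paper takes the restriction to $\fA_n$ of the $\fS_n$-character of a non-self-conjugate partition ($(n-1,1)$, $(n-2,1^2)$ or $(n-2,2)$ according to $n\bmod 3$), which is rational, $\fS_n$-invariant (hence $\Aut$-invariant, the degree being unique for $n=6$), of degree divisible by~$3$ by the hook formula --- and whose reduction mod~$3$ is irreducible because these are $3$-JM partitions, certified by Fayers' classification \cite[Thm.~3.1]{F16}. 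That last input is the missing ingredient in your sketch: without a general criterion for when an ordinary character of $\fA_n$ remains irreducible modulo~$3$, the step "irreducible reduction" cannot be verified for an infinite family.
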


\begin{proof}
Assume that $S$ has a character $\eta\in\Irr(S)$ of $p$-defect zero. Then
$\eta^0\in\IBr(S)$, $p$ divides $\eta(1)$, and clearly the stabilisers of
$\eta$ and $\eta^0$ in $\Aut(S)$ agree. Now it is known that all non-abelian
simple groups have blocks of $p$-defect zero for all $p\ge5$, and for $p=3$ the
only exceptions to this statement are certain alternating groups and the two
sporadic groups $Suz$ and $Co_3$ (see \cite[Cor.~2]{GO96}). For $Suz$ and
$Co_3$ our claim is checked from the known decomposition matrices in \cite{GAP}:
$Suz$ has a unique irreducible character $\eta$ with $\eta(1)=15795$, and that
remains irreducible modulo~3, and $Co_3$ has a unique irreducible character
$\eta$ with $\eta(1)=91125$ in a block of defect~1 that remains irreducible
modulo~3.   \par
The alternating group $\fA_5$ has two characters of 3-defect zero.
For $\fA_n$ with $n\ge6$ let $\chi$ be the irreducible character of $\fS_n$
labelled by the partition $\la=(n-1,1)$ if $n\equiv1\pmod3$, $\la=(n-2,1^2)$ if
$n\equiv2\pmod3$ and $\la=(n-2,2)$ if $n\equiv0\pmod3$. Then by the hook
formula, $\chi(1)$ is $n-1$, $(n-1)(n-2)/2$, $n(n-3)/2$ respectively, so
divisible by~3.
Furthermore, since $\la$ is not self-conjugate, $\chi$ restricts to an
irreducible character $\eta$ of $\fA_n$ that is rational. Finally, $\la$ is
a 3-JM partition in each case, so $\eta^0\in\IBr(\fA_n)$ by
\cite[Thm.~3.1]{F16}. This completes the proof.
\end{proof}

We also need this lemma.

\begin{lem}   \label{gab}
 Suppose that $p$ is an odd prime. Let $G$ be a finite group,
 $N\unlhd G$, and let $\theta \in \Irr(N)$ be $p$-rational such that
 $\theta^0\in\IBr(N)$ and
 $I_G(\theta)=I_G(\theta^0)$. Assume that $G/N$ is $p$-solvable. Then there
 exists a $p$-rational $\chi\in\irr{G|\theta}$ such that $\chi^0\in\IBr(G)$.
\end{lem}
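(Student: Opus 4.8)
The plan is to induct on $|G:N|$, first peeling off the Clifford correspondence and then climbing a chief series of the $p$-solvable quotient $G/N$. To begin I would reduce to the case where $\theta$ is $G$-invariant. Put $T=I_G(\theta)=I_G(\theta^0)$. If $T<G$, I apply the lemma to the triple $(T,N,\theta)$ (legitimate, since $T/N\le G/N$ is $p$-solvable and $I_T(\theta)=T=I_T(\theta^0)$) to obtain a $p$-rational $\psi\in\irr{T|\theta}$ with $\psi^0\in\IBr(T)$. Then $\chi:=\psi^G$ does the job: it is irreducible over $\theta$ by the ordinary Clifford correspondence; it is $p$-rational because the values of an induced character lie in the field generated by the inducing one, so $\QQ(\psi^G)\subseteq\QQ(\psi)$; and $\chi^0=(\psi^0)^G\in\IBr(G)$ by the Clifford correspondence for Brauer characters, which applies precisely because $\psi^0$ lies over $\theta^0$ and $I_G(\theta^0)=T$. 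Thus the hypothesis $I_G(\theta)=I_G(\theta^0)$ is exactly what makes the ordinary and modular Clifford correspondences match up, and I may assume $\theta$, hence also $\theta^0$, is $G$-invariant.

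With $\theta$ $G$-invariant I would choose $N\le M\unlhd G$ with $M/N$ a chief factor of $G/N$, so that $M/N$ is either a $p$-group or a $p'$-group. The aim is to produce a $p$-rational $\chi_M\in\irr{M|\theta}$ with $\chi_M^0\in\IBr(M)$ and $I_G(\chi_M)=I_G(\chi_M^0)$; then the lemma applied to $(G,M,\chi_M)$ -- of strictly smaller index $|G:M|$ and with $G/M$ still $p$-solvable -- yields the desired $\chi\in\irr{G|\chi_M}\subseteq\irr{G|\theta}$.

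When $M/N$ is a $p'$-group this step is essentially automatic. Since $\theta^0\in\IBr(N)$ is $M$-invariant and $p\nmid|M/N|$, reduction modulo $p$ gives a bijection $\irr{M|\theta}\to\IBr(M|\theta^0)$, $\chi\mapsto\chi^0$ (the twisted group algebras $\mathbb{C}[M/N]_\alpha$ and $\overline{\FF}_p[M/N]_{\bar\alpha}$ are both semisimple and matched by lifting idempotents). In particular every $\chi\in\irr{M|\theta}$ already has $\chi^0\in\IBr(M)$, and this bijection is $G$-equivariant, so $I_G(\chi)=I_G(\chi^0)$ for each such $\chi$. Moreover any $\tau\in\Gal(\QQ_{|M|}/\QQ_{|M|_{p'}})$ fixes all Brauer-character values (these lie in $\QQ_{|M|_{p'}}$), whence $(\chi^\tau)^0=\chi^0$; injectivity of $\chi\mapsto\chi^0$ then forces $\chi^\tau=\chi$, so every $\chi\in\irr{M|\theta}$ is automatically $p$-rational. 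I may thus take $\chi_M$ to be any character in $\irr{M|\theta}$ and recurse.

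The genuine obstacle is the case where $M/N$ is a $p$-group. Here $\theta^0$ has a unique extension $\mu\in\IBr(M)$ (it is the only Brauer character of $M$ over the $M$-invariant $\theta^0$, as $M/N$ is a $p$-group). What must be shown is that $\theta$ \emph{itself} extends to $M$: were $\theta$ (partially or fully) ramified, then by degrees every $\chi\in\irr{M|\theta}$ would satisfy $\chi^0=e\mu$ with $e>1$, and no character over $\theta$ would reduce irreducibly. This is exactly where $p$ odd and the $p$-rationality of $\theta$ enter: the obstruction to extending $\theta$ is a class $\alpha\in H^2(M/N,\mathbb{C}^\times)$ of $p$-power order, and $p$-rationality of the invariant character $\theta$ forces $\alpha=1$ -- equivalently, for $p$ odd a $p$-rational $M$-invariant character over a $p$-group quotient extends and has a unique $p$-rational extension. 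Granting this, I let $\chi_M$ be that unique $p$-rational extension; then $\chi_M^0=\mu\in\IBr(M)$ by degrees, and by uniqueness $\chi_M$ (like $\mu$) is $G$-invariant, so $I_G(\chi_M)=G=I_G(\chi_M^0)$, and I recurse. An alternative handling of this last case is to pass, through a central character-triple isomorphism preserving $p$-rationality and reduction modulo $p$, to a $p$-solvable overgroup in which $\theta$ becomes a linear $p'$-character, and then invoke Isaacs' theorem \cite{Is74} that in a $p$-solvable group with $p$ odd every irreducible Brauer character has a unique $p$-rational lift. I expect this extension-and-uniqueness statement in the $p$-group case to be the crux of the whole argument.
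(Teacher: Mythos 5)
Your argument is correct, but it follows a genuinely different route from the paper's in the key ($G$-invariant) case. The opening reduction is identical: both you and the authors use $I_G(\theta)=I_G(\theta^0)$ to make the ordinary and Brauer Clifford correspondences match, inducing a solution from $T$ to $G$. After that, the paper does \emph{not} climb a chief series: it invokes the character-triple reduction \cite[Thm.~4.5]{MNS} to replace $(G,N,\theta)$ by a triple in which $N$ is a central $p'$-subgroup, so that $G$ itself becomes $p$-solvable, and then quotes Isaacs' theorem \cite{Is74} that every irreducible Brauer character of a $p$-solvable group ($p$ odd) has a $p$-rational lift. Your chief-series approach replaces this with two local steps: for $p'$-factors, the defect-zero-type bijection $\Irr(M\mid\theta)\to\IBr(M\mid\theta^0)$ (semisimple reduction of the twisted group algebra), which is correct and gives the equivariance and $p$-rationality you need; for $p$-factors, the existence and uniqueness of a $p$-rational extension of an invariant $p$-rational character over a $p$-group quotient with $p$ odd. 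That last statement is exactly Isaacs' Theorem 6.30 in \cite{Isaacs} and is the crux, as you say; your heuristic justification via a $p$-power-order class in $H^2(M/N,\mathbb{C}^\times)$ being killed by $p$-rationality is not really how that theorem is proved (the standard argument is an induction through a cyclic step plus a Galois fixed-point count), so you should cite or reprove 6.30 rather than gesture at the obstruction. What each approach buys: the paper's is shorter but leans on the rather heavy fact that the character-triple isomorphism of \cite{MNS} is compatible with reduction modulo $p$ and with $p$-rationality; yours avoids that machinery entirely at the cost of the two elementary but nontrivial verifications above, and is arguably more self-contained given that Isaacs 6.30 is a textbook result.
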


\begin{proof}
Let $T=I_G(\theta)=I_G(\theta^0)$, and assume that $T<G$. Arguing by induction
on $|G:N|$, there exists a $p$-rational $\psi\in\irr{T|\theta}$ such that
$\psi^0 \in \IBr(T)$. Now, $\psi^0 \in \IBr(T|\theta^0)$. By the Clifford
correspondence for Brauer characters, we have $(\psi^0)^G \in \IBr(G)$. Now,
$(\psi^0)^G=(\psi^G)^0$, and therefore $(\psi^G)^0$ is irreducible (and
$\psi^G \in \irr G$ is $p$-rational). So we may assume that $T=G$.
By \cite[Thm.~4.5]{MNS}, we may then assume that $N$ is a central $p'$-subgroup.
In particular, $G$ is $p$-solvable. 
Now, let $\varphi \in \IBr(G)$ lying over $\theta$, and let $\chi \in \Irr(G)$
be the $p$-rational lift of $\chi$ (by the main result of \cite{Is74}). Then
$\chi$ is over $\theta$ and we are done. 
\end{proof}

This is Theorem~C:

\begin{thm}   \label{normal}
 Let $G$ be a finite group, and let $p$ be an odd prime number.
 Let $P \in \Syl_p(G)$. Then $P\unlhd G$ if and only if every $p$-rational
 $\chi\in\Irr(G)$ with $\chi^0\in\IBr(G)$ has degree not divisible by $p$.
\end{thm}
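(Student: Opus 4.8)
My plan is to prove the two implications separately; the forward one is short, and the converse carries essentially all the weight.

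For the implication $P\unlhd G\Rightarrow p\nmid\chi(1)$, I would argue as follows. If $P\in\Syl_p(G)$ is normal then $P=\oh pG$, and a normal $p$-subgroup lies in the kernel of every irreducible $p$-Brauer character, so $\IBr(G)=\IBr(G/P)$ and in particular $P\le\ker\varphi$ for $\varphi=\chi^0$. Since $p$ is odd and $\chi$ is $p$-rational with $\chi^0\in\IBr(G)$, Corollary~\ref{kernels} gives $\ker\varphi\le\ker\chi$, whence $P\le\ker\chi$. Thus $\chi$ is inflated from $G/P$, so $\chi(1)$ divides $|G/P|=|G|_{p'}$ and $p\nmid\chi(1)$.

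For the converse I would take $G$ to be a counterexample of least order: every $p$-rational $\chi\in\Irr(G)$ with $\chi^0\in\IBr(G)$ has $p'$-degree, yet $P\not\unlhd G$. Two reductions are immediate. First, the hypothesis passes to quotients, since a $p$-rational $\bar\chi\in\Irr(G/M)$ with $\bar\chi^0\in\IBr(G/M)$ inflates to such a character of $G$ of the same degree; hence by minimality $G/M$ is $p$-closed for every $1\ne M\unlhd G$. Second, if $Q=\oh pG\ne1$ then, exactly as above, Corollary~\ref{kernels} forces $Q\le\ker\chi$ for every relevant $\chi$, so the hypotheses for $G$ and for $G/Q$ coincide while $P/Q\not\unlhd G/Q$, contradicting minimality. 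Thus I may assume $\oh pG=1$, and $p\mid|G|$ (otherwise $P=1\unlhd G$). The decisive structural point is now that, fixing a minimal normal subgroup $N$ of $G$, minimality gives that $G/N$ is $p$-closed; as $PN/N\in\Syl_p(G/N)$ this yields $PN\unlhd G$, that $G/PN$ is a $p'$-group, and crucially that $G/N$ is $p$-solvable. The goal is then to contradict the hypothesis by producing a $p$-rational $\chi\in\Irr(G)$ with $\chi^0\in\IBr(G)$ and $p\mid\chi(1)$, and the purpose of these reductions is precisely to make Lemma~\ref{gab} applicable over $N$.

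Because $\oh pG=1$, the minimal normal subgroup $N$ is either a $p'$-group or a product $N=S_1\times\cdots\times S_k$ of non-abelian simple groups with $p\mid|S_1|$. If $N$ is a $p'$-group, then $P$ cannot centralise $N$ (else $P$ would be a characteristic, and hence normal, subgroup of $PN=N\times P$), so by coprime action $P$ moves some $\theta\in\Irr(N)$. Since $N\le I_G(\theta)$ always and $G/PN$ is a $p'$-group, the $G$-orbit of such a $\theta$ has length divisible by $p$. As $N$ is a $p'$-group, $\theta$ is automatically $p$-rational with $\theta^0=\theta\in\IBr(N)$ and $I_G(\theta)=I_G(\theta^0)$; since $G/N$ is $p$-solvable, Lemma~\ref{gab} yields a $p$-rational $\chi\in\Irr(G\mid\theta)$ with $\chi^0\in\IBr(G)$, and $p\mid|G:I_G(\theta)|\mid\chi(1)$ gives the contradiction.

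It remains to treat $N=S_1\times\cdots\times S_k$ with $S:=S_1$ and $p\mid|S|$, and this is where Theorem~\ref{thm:def 0} enters. That result supplies a $p$-rational $\eta\in\Irr(S)$ with $p\mid\eta(1)$, $\eta^0\in\IBr(S)$, and, decisively, $I_{\Aut(S)}(\eta)=I_{\Aut(S)}(\eta^0)$. I would put $\theta=\eta\times\cdots\times\eta\in\Irr(N)$; then $\theta$ is $p$-rational, $p\mid\theta(1)=\eta(1)^k$, and $\theta^0=\eta^0\otimes\cdots\otimes\eta^0\in\IBr(N)$. The equality of the two stabilisers in $\Aut(S)$ propagates factor-by-factor, together with the permutation action of $G$ on the $S_i$, to give $I_G(\theta)=I_G(\theta^0)$, which is exactly the hypothesis of Lemma~\ref{gab}. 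As $G/N$ is $p$-solvable, that lemma produces a $p$-rational $\chi\in\Irr(G\mid\theta)$ with $\chi^0\in\IBr(G)$, and $p\mid\theta(1)\mid\chi(1)$ contradicts the hypothesis. I expect the main obstacle to be concentrated in two places: the simple-group input Theorem~\ref{thm:def 0} (already established), and the a priori danger that $G/N$ be badly non-$p$-solvable, for instance through the permutation action on the factors $S_i$. Both are disarmed by the minimality observation that every proper quotient of $G$ is $p$-closed, which is what licenses Lemma~\ref{gab} uniformly and makes the whole argument go through.
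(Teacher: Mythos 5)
Your proof is correct and follows essentially the same route as the paper: induction on $|G|$, reduction to a unique minimal normal subgroup $N$ with every proper quotient $p$-closed, and, in the decisive case where $N$ is a product of non-abelian simple groups, the combination of Theorem~\ref{thm:def 0} with Lemma~\ref{gab} applied to the diagonal character $\theta$ built from $\eta$. The only (immaterial) divergence is the case where $N$ is a $p'$-group, which the paper dispatches by observing that $G$ is then $p$-solvable and quoting Isaacs' $p$-rational lifts together with the Brauer-character It\^o--Michler theorem of Manz--Wolf, whereas you rerun the Clifford-theoretic argument via Lemma~\ref{gab} on a character of $N$ moved by $P$; both work.
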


\begin{proof}
Assume that $P \unlhd G$, and let $\chi\in\Irr(G)$ be $p$-rational with
$\chi^0\in\IBr(G)$. We know that $P\sbs \ker{\chi^0}$
(by \cite[Lemma~2.32]{Na98}). Let $\theta\in\Irr(P)$ be an irreducible
constituent of~$\chi_P$. By Corollary~\ref{kernels}, we have $\theta=1_P$.
Therefore, $\chi \in \Irr(G/P)$, and $p$ does not divide $\chi(1)$.

We prove the converse by induction on $|G|$. Arguing by induction, we have that
$G$ has a unique minimal normal subgroup $N$, and that $G/N$ has a normal Sylow
$p$-subgroup $PN/N$. If $N$ is a $p$-group, then $P \unlhd G$,
and we are done. If $N$ is a $p'$-group, then $G$ is $p$-solvable
and the theorem follows. (For instance, using that every $\varphi \in \IBr(G)$
has a $p$-rational lift, by \cite{Is74}, and \cite[Thm.~13.1(c)]{MW}.)

So we assume that $N=S^{x_1} \times \cdots \times S^{x_t}$, where $S$ is a
non-abelian simple group of order divisible by $p$, and $\{x_1,\ldots,x_t\}$ is
a complete set of representatives of the right cosets of $H=\norm GS$ in $G$. 
By Theorem \ref{thm:def 0}, there exists a $p$-rational $\eta \in \Irr(S)$ such
that $\eta^0 \in \IBr(S)$, $p$ divides $\eta(1)$, and
$I_{\Aut(S)}(\eta)=I_{\Aut(S)}(\eta^0)$. Let
$$\theta=\eta^{x_1} \times \cdots \times \eta^{x_t} \in \Irr(N) \, .$$
Then $\theta \in \Irr(N)$ and
$$\theta^0=(\eta^0)^{x_1} \times \cdots \times (\eta^0)^{x_t}\in\IBr(N) \, .$$
We claim that $I_G(\theta)=I_G(\theta^0)$. Suppose $x \in G$ fixes $\theta^0$.
We have
$$S^{x_ix^{-1}}=S^{x_{\sigma(i)}}$$
for a permutation $\sigma \in \fS_t$. Thus $x_{\sigma(i)} x=h_ix_i$ for some
$h_i \in \norm GS$. Then it is easy to check that
$$\left((\eta^0)^{x_1} \times \cdots \times (\eta^0)^{x_t}\right)^x
  =(\eta^0)^{x_{\sigma(1)}x} \times \cdots \times(\eta^0)^{x_{\sigma(t)}x}
  =(\eta^0)^{h_1x_1} \times \cdots \times(\eta^0)^{h_tx_t} \, .$$
We conclude that
$$(\eta^0)^{h_i}=\eta^0$$
for all $i$. Now, $\norm GS/S\cent GS \le \Aut(S)$, and we conclude that
$\eta^{h_i}=\eta$. Then
$$\theta^x=\left(\eta^{x_1} \times \cdots \times \eta^{x_t}\right)^x
  =\eta^{x_{\sigma(1)}x} \times \cdots \times\eta^{x_{\sigma(t)}x}
  =\eta^{h_1x_1} \times \cdots \times\eta^{h_tx_t}
  =\eta^{x_1} \times \cdots \times\eta^{x_t}=\theta \, .$$
By Lemma \ref{gab}, there exists a $p$-rational $\chi \in \irr{G|\theta}$ such
that $\chi^0 \in \IBr(G)$. Since $p$ divides $\theta(1)$ and $\chi(1)$ is not
divisible by $p$, by hypothesis, we get a contradiction.
\end{proof}

The following gives an example where $p$-rational lifts of an irreducible Brauer
character are not necessarily unique. Furthermore, it is an example of an
irreducible $p$-rational character lifting an irreducible Brauer character
$\chi$ which is not \emph{automorphic} (see Definition~5.3 of \cite{Is74}).
These are characters $\chi\in\Irr(G)$ such that $\chi^0\in\IBr(G)$ and
$\chi^a=\chi$ whenever $a \in \Aut(G)$ fixes $\chi^0$.

\begin{exmp}   \label{exmp:An}
 Let $p\ge3$ be a prime and $G=\fA_n$ the alternating group of degree~$n:=p^2$.
 Let $\chi$ be the irreducible character of $\fS_n$ labelled by the hook
 $\la=((n+1)/2,1^{(n-1)/2})$. Since $\la$ is self-conjugate, $\chi$ splits
 into two distinct characters $\eta_1,\eta_2$ upon restriction to $\fA_n$. By
 \cite[Thm.~2.5.13]{JK} the $\eta_i$ only differ in their value on elements
 whose cycle type is made up of the hook lengths on the main diagonal of $\la$,
 hence on $n$-cycles, and if the values there are irrational, then they
 involve $\sqrt{(-1)^{(n-1)/2}n}$, which in our situation is an integer. Thus,
 both $\eta_i$ are rational.   \par
 Since $\eta_1$ and $\eta_2$ only differ on $n$-cycles, that is, elements of
 order~$p^2$, $\eta_1^0=\eta_2^0$. Now the hook length of $\la$ in the
 $(1,1)$-node is $n=p^2$, hence has positive $p$-valuation. This means that
 $\la$ is an $R$-partition of type I in the sense of \cite[\S2.6]{F16}. Then
 $\eta_1^0=\eta_2^0$ is an irreducible Brauer character by \cite[Thm.~3.1]{F16}.
\end{exmp}


\end{document}